\documentclass[10pt,a4paper]{article}
%\pdfoutput=1
\usepackage[utf8]{inputenc}
\usepackage[T1]{fontenc}
\usepackage{amsmath}
\usepackage{amsthm}
\usepackage{amsfonts}
\usepackage{amssymb}
\usepackage{bbm}
\usepackage{a4,a4wide}
\usepackage{graphicx}
\usepackage{pstricks}
\usepackage{color}
\usepackage{todonotes}
\definecolor{darkblue}{rgb}{0,0,0.6}
\definecolor{darkred}{rgb}{0.6,0,0}
\usepackage[colorlinks=true,urlcolor=darkblue,citecolor=darkblue,linkcolor=darkred]{hyperref}
\usepackage[bibtex-style,initials,nobysame]{amsrefs}
\usepackage[only,llbracket,rrbracket]{stmaryrd}

%\usepackage{unicodeintex}

%%%%%%%%%%%% Version for better reading experience on a screen %%%%%%%
\newif\ifscreen
% ---- Line to uncomment to get the screen version
%\screentrue % \screenfalse
\ifscreen
% ---- Everything in sans serif ---- % %
\usepackage{cmbright}
% ---- Use the standard sans serif for text instead of cmbright ---- %

% ---- and then put the text variables in math mode as in text (cmss)
\usepackage[italic,defaultmathsizes,basic,selfGreek]{mathastext}
% Except g which is ugly in cmss for math\dots
\DeclareSymbolFont{letterg}{OML}{cmbrm}{m}{it}
\DeclareMathSymbol{g}{\mathalpha}{letterg}{`g}
%%%%%%%%%%%%%%%%%%%%%%%%%%%%%%%%%%%%%%%%%%%%%%%%%%%%%%%%%%%%%%%%%%%%%%%
% ---- Finally, reduce margins ----- % %
%\usepackage{vmargin}
%\setmarginsrb{20mm}{10mm}{20mm}{10mm}{0mm}{10mm}{0mm}{10mm}
%%%%%%%%%%%%%%%%%%%%%%%%%%%%%%%%%%%%%%%%%%%%%%%%%%%%%%%%%%%%%%%%%%%%%%
\fi

\newtheorem{theo}{\textbf{Theorem}}[section]
\newtheorem{lem}[theo]{\textbf{Lemma}}
\newtheorem{prop}[theo]{\textbf{Proposition}}

\newtheorem{exa}[theo]{\textbf{Example}}
\newtheorem{rem}[theo]{\textbf{Remark}}
\newtheorem{ass}[theo]{\textbf{Assumption}}

\title{Local stability of perfect alignment for a spatially homogeneous kinetic model}
\author{Pierre Degond\footnote{Department of Mathematics, Imperial College, South Kensington Campus, London SW7 2AZ, UK. E-mail: p.degond@imperial.ac.uk},\: Amic Frouvelle\footnote{CEREMADE, UMR 7534, Université Paris--Dauphine, Place du Maréchal de Lattre de Tassigny, 75775 Paris Cedex 16, France. E-mail: frouvelle@ceremade.dauphine.fr}\: and Gaël Raoul\footnote{Centre d'Écologie Fonctionnelle et Évolutive, UMR
5175, CNRS, 1919 Route de Mende, 34293 Montpellier, France.
E-mail: raoul@cefe.cnrs.fr.}}
\date{}

\begin{document}

\maketitle

\begin{abstract}
We prove the nonlinear local stability of Dirac masses for a kinetic model of alignment of particles on the unit sphere, each point of the unit sphere representing a direction. A population concentrated in a Dirac mass then corresponds to the global alignment of all individuals. The main difficulty of this model is the lack of conserved quantities and the absence of an energy that would decrease for any initial condition. We overcome this difficulty thanks to a functional which is decreasing in time in a neighborhood of any Dirac mass (in the sense of the Wasserstein distance).
The results are then extended to the case where the unit sphere is replaced by a general Riemannian manifold.
\end{abstract}

\section{Introduction}\label{section:intro}
Alignment mechanisms are present in many biological and physical systems such as self-propelled particles or rod-like polymers. One of the relevant questions in the study of such systems is to know whether the interactions between individuals allow the whole system to become globally aligned. One can then talk about flocking, long range polarization, magnetization, etc. depending on the context. In this work we investigate a simple kinetic model with an alignment mechanism for which this flocking behaviour can be rigorously exhibited.
 
We consider an infinite number of individuals/particles structured by their orientation in the unit sphere of~$\mathbb{R}^n$. The model we  consider here is homogeneous in the sense that the density of particles is not spatially structured. We assume that all particles interact with the same probability, and when they do, their new orientation is the average orientation of the particles prior to the interaction. The rate at which a particle is chosen to interact with another one is constant (equal to 1) and independent of the particle. Notice that when the interacting particles have opposite directions, their ``average direction'' is not well-defined. Their new orientations are then chosen at random on the corresponding equator (the set of directions orthogonal to the initial opposite directions).
 
This model is related to the Boltzmann-like model of Bertin, Droz and Grégoire~\cites{bertin2006boltzmann,bertin2009hydrodynamic} in which self-propelled particles interact and tend to align. When two particles interact, they align following the process described above, and their orientation is then subject to angular noise. The model we consider here is then a kinetic model corresponding to the spatially homogeneous version of this model, when no noise is present in the system. The links between the kinetic model and the underlying individual based model have been studied in~\cites{carlen2013kinetic,carlen2013hierarchy}. In this study, we will focus on the asymptotic behavior of the solutions to the spatially homogeneous kinetic model. It is indeed well known that the study of spatially inhomogeneous Boltzmann-type models is a much harder problem, and the analysis of such equations (through hydrodynamic limits, for instance) requires a good understanding of the corresponding homogeneous model. From the modeling point of view, the study of such models is also a step towards the understanding of more realistic models based on the fact that birds in a flock interact with a limited number of congeners~\cite{ballerini2008interaction}. The model of~\cites{bertin2006boltzmann,bertin2009hydrodynamic} is intended to mimic the Vicsek alignment model~\cite{vicsek1995novel}. While the model of~\cites{bertin2006boltzmann,bertin2009hydrodynamic} relies on a binary interaction point of view, that of Vicsek describes multiple simultaneous interactions. The corresponding kinetic model is of mean-field type and takes the form of a Fokker--Planck equation~\cite{bolley2012meanfield}. Equilibria of this Fokker--Planck model have been studied in~\cites{frouvelle2012dynamics,degond2014phase} in the space-homogeneous case, and the non-spatially homogeneous case is addressed in~\cites{degond2008continuum, degond2010macroscopic, degond2013macroscopic, degond2014phase}.
 
We then consider two generalizations of the model. The first generalization that we will consider is a more general ``position space'' to describe the orientation of the particles. From the modeling point of view, it is indeed interesting to consider more complex position spaces. The main example we have in mind concerns elongated particles which have an orientation (that can be seen as a point on the unit sphere), but are symmetrical with respect to their center of mass. A particle with a given orientation is the indistinguishable from a particle with the opposite direction, so that those particles are indeed characterized by a ``position'' that is a point on the sphere quotiented by~$\{\pm\mathrm{Id}\}$. The alignment dynamics is also affected: the particles try to align to neighboring particles,  but possibly with opposite directions. A good modeling choice then seems to describe the dynamics of the particles as points on the Riemannian manifold formed by the sphere quotiented by~$\{\pm\mathrm{Id}\}$. This assumption has for instance been chosen to model the alignment dynamics of fish in~\cite{carrillo2009double}, leading to possible contra-rotating mills (that have indeed been observed on natural populations). A second example is flocking birds, that do not only copy their neighbors' orientation, but also their posture (in particular the angular orientation of their wings). It can then be interesting to characterize the position of the birds through a more complicated manifold. We will thus extend our result on the local stability of Dirac masses to alignment models on general Riemannian manifolds. In that case, the interaction between two particles consists in updating their position at the midpoint of the minimal geodesic joining their ``positions'' prior to the interaction. The second generalization that we make is to relax our assumptions on the outcome of the interactions: instead of assuming that the two interacting particles align perfectly, we merely assume that the interaction brings them closer. 
 
When the space of positions is the euclidean space~$\mathbb{R}^n$,~$n=1,\,2,\,3$, the model we study here is related to several other models from physics of sociology. We can for instance mention the inelastic Boltzmann equation~\cites{bobylev2000properties,mischler2009stability,carrillo2009overpopulated}, used for instance to model granular gases, or the simpler Kac inelastic model~\cites{pulvirenti2004asymptotic,gabetta2012complete,bassetti2011central}. This type of model also appears in socio-physical models~\cites{boudin2009kinetic,galam1982sociophysics}, or wealth redistribution models in agent-based markets \cites{bisi2009kinetic,che2011kinetic}. Finally, such models appear in biology, to model the effect of recombination in sexual population (in the so-called infinitesimal model)~\cites{bulmer1980mathematical,turelli1994genetic}, or the exchange of proteins between cells~\cites{pasquier2012different,hinow2009analysis}. In all those works, the fact that the ``position space'' is~$\mathbb{R}^n$ allows the development of powerful methods. In particular, the center of mass is conserved, and it is possible to define an energy that is decreasing for any initial condition. Moreover, thanks to the geometry of the position space, it is possible to use Fourier-based methods as well as the convexity properties obtained  thanks to optimal transport methods (see~\cite{carrillo2007contractive} for a synthetic description of those methods). It is then possible to obtain precise descriptions of the dynamics of solutions. Unfortunately, it turns out that the properties and methods listed above fail on more general spaces of positions, such as spheres, as we consider in this article. 
One could hope that the Fourier transform methods introduced for e. g. the Kac model (see e.g.~\cite{carrillo2007contractive}) could be reproduced in the case of spheres using spherical harmonics. This approach is currently under investigation in the case of the unit circle, see~\cite{carlen2014model}, but it appears that severe difficulties arise compared to the euclidian situation.
 
The main result of this article is that the solutions of the kinetic model we consider converge exponentially fast to a Dirac mass, provided the initial condition is close to a Dirac mass (in the sense of the 2-Wasserstein distance). We first provide a proof of the main result in the case of the so-called midpoint model on a sphere (which is the simplest model for alignment in our setting), in order to keep the technicality of the proof to a minimum. Once our main result is proven in this simple case, we show that this initial proof can actually be extended to general Riemannian manifolds, thanks to some geometrical arguments that we detail.  Finally, we show that more general interactions can be considered, thanks to additional geometrical arguments that we provide.
 
The main difficulty of our study is the absence of conserved quantities which would help to locate the asymptotic final Dirac mass, and that could play the role of the center of mass in the inelastic Boltzmann equation. Similarly, we do not believe there exists an energy functional that would decrease in time for any initial condition.  This lack of conserved quantities and global Lyapunov functional is common in models describing biological systems. It is however sometimes possible for such models to prove the convergence of a solution to a stationary profile in large time, even if it is not possible to determine  which specific asymptotic profile will be selected~\cites{frouvelle2012dynamics,giacomin2012global}. A second difficulty of the model comes from the geometry of the position space, which seems to prevent the use of arguments based on convexity. %We will indeed prove our result of stability in a Wasserstein setting which allows small perturbations at long distance.
 
The main argument we use in our analysis is an energy adapted to the problem, which is a Lyapunov functional provided the initial condition is sufficiently close to a Dirac mass. To prove  that this energy is actually decreasing, we introduce some local and global estimates regarding the microscopic variation of this energy. Those estimates require a good understanding of the geometry of the problem, involving a positive injectivity radius and bounds on the curvature of the manifold.
 
The plan of the paper is the following. In Section~\ref{section-model}, we define the kinetic model and discuss its well-posedness. We detail the case of the midpoint model on the euclidean space~$\mathbb{R}^n$, for which the convergence towards a Dirac mass can be easily proven, and we detail the difficulties appearing for more general position sets, describing in particular the case where the position space is the circle~$\mathbb{S}^1$. In Section~\ref{section:midpoint-sphere}, we prove our main result in the case of the midpoint model on the unit sphere of~$\mathbb{R}^n$. In Section~\ref{section:manifold}, we extend this result to the case of a general Riemannian manifold. Finally, in Section~\ref{section:non-midpoint}, we define the notion of contracting models, and we extend our main result to this wider class of models.

\section{The model and preliminary results}\label{section-model}
\subsection{The model}

We are interested in the evolution of a set of particles, represented at each time~$t\geqslant 0$ by a probability measure~$\rho(t,\cdot)$ over a set of positions~$x\in\mathcal M$. See Section~\ref{section:intro} for typical examples of position sets. The position set is here a complete connected Riemannian manifold~$\mathcal M$, endowed with the geodesic distance~$d$. The particles collide, and the effect of the collisions is described by a collision kernel~$K$: given two pre-collisional positions~$x_*,x_*'$ in~$\mathcal M$, the law of the post-collisional position of the particles is the probability law~$K(\cdot,x_*,x_*')$ on~$\mathcal M$. We assume that the rate at which a particle is colliding with another is constant equal to~$1$, and that particles collide independently of their positions.

% (actually, which could even consider any Polish metric space here). The Riemannian manifold~$\mathcal M$ is endowed with its geodesic d, and we will denote by~$d$ the geodesic distance on~$\mathcal M$. Then, for any ``pre-collisional'' positions~$x_*,x_*'$ in~$\mathcal M$, we are given a probability measure~$K(\cdot,x_*,x_*')$ describing the ``post-collisional'' positions. We suppose that the rate at which a particle is colliding with another is constant equal to~$1$, and that particles collide independently of their positions. 

%\fbox{en remarque ? actually, which could even consider any Polish metric space here}

Since additionally we consider an infinite number of particles, the model we are interested in is the following, where the unknown~$\rho(t,x)$ is the probability density of finding particles at time~$t$ and position~$x$:
\begin{equation}
\label{eq-kinetic-model}
\partial_t \rho(t,x)=\int_{\mathcal M\times\mathcal M} K(x,x_*,x_*')\rho(t,x_*)\rho(t,x_*')\,dx_*\,dx_*'-\rho(t,x), 
\end{equation}
where we use the abusive notation~$\rho(t,x_*)\,dx_*$ instead of~$d\left(\rho(t,\cdot)\right)(x_*)$.
When no confusion is possible, we will write~$\rho$ or~$\rho_t$ for~$\rho(t,\cdot)$ and~$\rho(x)$ for~$\rho(t,x)$.
We can consider that the particle are indistinguishable and therefore~$K(\cdot,x_*,x_*')=K(\cdot,x_*',x_*)$, since symmetrizing~$K$ does not change the outcome of expression~\eqref{eq-kinetic-model}.
We therefore introduce the symmetric operator~$A$ as follows:
\begin{equation}\label{def-A}
A(\rho,\widetilde{\rho})(x)=\int_{\mathcal M\times\mathcal M} K(x,x_*,x_*')\,d\rho(x_*)\,d\widetilde{\rho}(x_*').
\end{equation}
Then~\eqref{eq-kinetic-model} is equivalent to~$\partial_t \rho=A(\rho,\rho)-\rho$ or, given in an integral form, for an initial condition~$\rho_0$:
\begin{equation}\label{eq-kinetic-model-integral}
\rho_t=\rho_0+\int_0^t[A(\rho_s,\rho_s)-\rho_s] ds.
\end{equation}
The framework in which we will enounce all the results is~$\mathcal P_2(\mathcal M)$, the set of probability measures~$\rho$ such that~$\int_{\mathcal M}d(x,\bar x)^2\,d\rho(x)dx<+\infty$ for some (or equivalently, for all)~$\bar x\in\mathcal M$, equipped with the Wasserstein distance~$W_2$ given by
\begin{equation}
W_2(\rho,\widetilde{\rho})^2=\inf_{\pi\in\Pi(\rho,\widetilde{\rho})}\int_{\mathcal M\times\mathcal M}d(x,y)^2\,d\pi(x,y)\label{defW2},
\end{equation}
where~$\Pi(\rho,\widetilde{\rho})$ is the set of probability measures on~$\mathcal M\times\mathcal M$ with marginals~$\rho$ and~$\widetilde{\rho}$. A first useful remark is that for any~$y\in\mathcal M$ and~$\rho\in\mathcal P(\mathcal M)$, we have
\begin{equation}
W_2(\rho,\delta_{y})^2=\int_{\mathcal M} d(x,y)^2\,d\rho(x),\label{eq-W2-dirac}
\end{equation}
since any probability measure on~$\mathcal M\times\mathcal M$ with marginals~$\rho$ and~$\delta_y$ is actually~$\rho\otimes\delta_y$.

\subsection{Existence and uniqueness of solutions}

The first proposition concerns the well-posedness of~\eqref{eq-kinetic-model} on~$\mathcal P_2(\mathcal M)$, under Lipschitz conditions for~$K$.
\begin{prop}\label{prop-existence-uniqueness}
Assume that for any~$x_*\in\mathcal M$, the map~$x_*'\mapsto K(\cdot,x_*,x_*')$ is a~$k$-Lipschitz map from~$\mathcal M$ to~$\mathcal P_2(\mathcal M)$, for some~$k$ independent of~$x_*$.
  Then, for any initial data~$\rho_0\in P_2(\mathcal M)$, there exists a unique global solution~$\rho\in C(\mathbb{R}_+,\mathcal P_2(\mathcal M))$ to~\eqref{eq-kinetic-model-integral}.
\end{prop}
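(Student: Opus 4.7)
The plan is to reformulate \eqref{eq-kinetic-model-integral} in the mild (Duhamel) form
\[
\rho_t = e^{-t}\rho_0 + \int_0^t e^{-(t-s)} A(\rho_s,\rho_s)\,ds,
\]
which is equivalent to \eqref{eq-kinetic-model-integral}, and then apply Banach's fixed point theorem in $C([0,T],\mathcal P_2(\mathcal M))$ endowed with the complete metric $d_T(\rho,\widetilde\rho)=\sup_{t\in[0,T]}W_2(\rho_t,\widetilde\rho_t)$. The advantage of the mild formulation is that the weights $e^{-t}$ and $e^{-(t-s)}ds$ sum to $1$, so $\rho_t$ is automatically a probability measure whenever $A(\rho_s,\rho_s)$ is one; no signed measures appear.

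The key analytical input is a Lipschitz estimate of the form
\[
W_2\bigl(A(\rho_1,\rho_1),A(\rho_2,\rho_2)\bigr) \leq 2k\, W_2(\rho_1,\rho_2).
\]
To obtain it, I would fix an optimal coupling $\pi\in\Pi(\rho_1,\rho_2)$, select measurably for each $(x_*,y_*,x_*',y_*')$ an optimal coupling $\gamma_{x_*,y_*,x_*',y_*'}$ between $K(\cdot,x_*,x_*')$ and $K(\cdot,y_*,y_*')$, and glue these via $\pi\otimes\pi$ to construct a coupling between $A(\rho_1,\rho_1)$ and $A(\rho_2,\rho_2)$. The triangle inequality in $W_2$ together with the Lipschitz assumption on $K$ (in each argument, by symmetry) then yields the bound. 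Essentially the same estimate, applied with $\widetilde\rho=\delta_{\bar x}$, shows that $A$ sends $\mathcal P_2(\mathcal M)^2$ into $\mathcal P_2(\mathcal M)$, so $\Phi\colon \rho\mapsto \bigl(t\mapsto e^{-t}\rho_0+\int_0^t e^{-(t-s)}A(\rho_s,\rho_s)\,ds\bigr)$ is well-defined from $C([0,T],\mathcal P_2(\mathcal M))$ to itself (continuity in $t$ is immediate from the integral formula).

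Using the joint convexity of $W_2^2$ under convex combinations of probability measures with a common probabilistic weight, I would then estimate
\[
W_2^2(\Phi(\rho)_t,\Phi(\widetilde\rho)_t) \leq \int_0^t e^{-(t-s)} W_2^2\bigl(A(\rho_s,\rho_s),A(\widetilde\rho_s,\widetilde\rho_s)\bigr)\,ds \leq 4k^2 t\, d_T(\rho,\widetilde\rho)^2,
\]
so that $d_T(\Phi(\rho),\Phi(\widetilde\rho))\leq 2k\sqrt{T}\,d_T(\rho,\widetilde\rho)$, which is a strict contraction for $T<1/(4k^2)$. Banach's theorem produces a unique fixed point on $[0,T]$, and since the contraction threshold depends only on $k$, the construction can be iterated on successive intervals of fixed length to yield a unique global solution in $C(\mathbb R_+,\mathcal P_2(\mathcal M))$. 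The main technical obstacle is the measurable selection of optimal couplings $\gamma_{x_*,y_*,x_*',y_*'}$ and the rigorous application of convexity of $W_2^2$ to the continuous mixture defining $\Phi(\rho)_t$; both are standard in optimal transport but do require care in the Riemannian manifold setting.
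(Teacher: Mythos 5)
Your proposal follows the same overall skeleton as the paper's proof: reformulate via the Duhamel/mild form, show that $A$ is $2k$-Lipschitz on $\mathcal P_2(\mathcal M)$ with respect to $W_2$, apply Banach's fixed-point theorem on $C([0,T],\mathcal P_2(\mathcal M))$ for $T$ small depending only on $k$, and iterate to reach a global solution. Where you genuinely diverge from the paper is in the proof of the key Lipschitz estimate for $A$: you construct a coupling of $A(\rho_1,\rho_1)$ and $A(\rho_2,\rho_2)$ by measurably selecting, for each pair of inputs, an optimal coupling of $K(\cdot,x_*,x_*')$ and $K(\cdot,y_*,y_*')$ and gluing these against an optimal plan $\pi\otimes\pi$, then invoke convexity of $W_2^2$ under mixtures to get the contraction. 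The paper instead invokes the Kantorovich duality formula~\eqref{dualityW2}: for a fixed admissible pair $(\varphi,\widetilde\varphi)$ it bounds the inner integral against $K$ by $W_2(K(\cdot,x_*,x_*'),K(\cdot,x_*,y_*'))^2$ directly from the definition of admissibility, then uses the Lipschitz assumption pointwise and takes the supremum over $(\varphi,\widetilde\varphi)$ at the very end. The advantage of the duality route is precisely that it eliminates the measurable-selection step you correctly flag as the main technical obstacle in your approach; one never needs to choose couplings, only test functions, so no selection theorem is invoked. Conversely, your coupling/gluing argument is arguably more geometric and makes the contractive mechanism more transparent, and the selection issue is indeed resolvable since $\mathcal M$ is Polish, so both proofs are sound. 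One small remark: you bound $\int_0^t e^{-(t-s)}\,ds$ by $t$ in the final contraction estimate, which gives $d_T(\Phi(\rho),\Phi(\widetilde\rho))\leq 2k\sqrt{T}\,d_T(\rho,\widetilde\rho)$; the sharper bound $1-e^{-T}$ would work equally well and neither affects the conclusion, since the threshold on $T$ depends only on $k$ in either case.
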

\begin{proof}[Proof of Proposition~\ref{prop-existence-uniqueness}]
We recall the Kantorovich duality formula~\cite{villani2009optimal}:
\begin{equation}
W_2(\rho,\widetilde{\rho})^2=\sup_{(\varphi,\widetilde{\varphi})\in\Phi}\left(\int_{\mathcal M}\varphi(x)\,d\rho(x)+\int_{\mathcal M}\widetilde{\varphi}(y)\,d\widetilde{\rho}(y)\right),\label{dualityW2}
\end{equation}
where~$\Phi$ is the set of pairs of bounded continuous functions~$\varphi,\widetilde{\varphi}$ from~$\mathcal M$ to~$\mathbb{R}$ such that for all~$x,y$ in~$\mathcal M$ we have~$\varphi(x)+\widetilde{\varphi}(y)\leqslant d(x,y)^2$. 
For~$\mu$ in~$\mathcal P_2(\mathcal M)$, we first show that~$\rho \mapsto A(\mu,\rho)$ is~$k$-Lipschitz with respect to the~$W_2$ distance. Indeed, for~$(\varphi,\widetilde{\varphi})$ in~$\Phi$, for~$\rho$ and~$\widetilde{\rho}$ in~$\mathcal P_2(\mathcal M)$, and for~$\pi$ in~$\Pi(\rho,\widetilde{\rho})$, we estimate
\begin{align*}
 I(\varphi,\widetilde{\varphi})&\overset{def}{=}\int_{\mathcal M}\varphi(x)\,d[A(\mu,\rho)](x)+\int_{\mathcal M}\widetilde{\varphi}(y)\,d[A(\mu,\widetilde{\rho})](y)\\
&=\int_{\mathcal M^3}\varphi(x)K(x,x_*,x_*')\,dx\,d\mu(x_*)\,d\rho(x_*')+\int_{\mathcal M^3}\widetilde{\varphi}(y)K(y,x_*,y_*')\,dy\,d\mu(x_*)\,d\widetilde{\rho}(y_*')\\
 &=\int_{\mathcal M^3}\bigg(\int_{\mathcal M}\varphi(x)K(x,x_*,x_*')\,dx+\int_{\mathcal M}\widetilde{\varphi}(y)K(y,x_*,y_*')\,dy\bigg)\,d\mu(x_*)\,d\pi(x_*',y_*')\\
&\leqslant\int_{\mathcal M^3}W_2(K(\cdot,x_*,x_*'),K(\cdot,x_*,y_*'))^2\,d\mu(x_*)\,d\pi(x_*',y_*')\\
&\leqslant k^2\int_{\mathcal M^3}d(x_*',y_*')^2\,d\mu(x_*)\,d\pi(x_*',y_*'),
\end{align*}
and then, since~$\mu$ is a probability measure, if we take the infimum of the above quantity in~$\pi$, we get:
\begin{align*}
W_2(A(\mu,\rho),A(\mu,\widetilde{\rho}))^2=\sup_{(\varphi,\widetilde{\varphi})\in\Phi}I(\varphi,\widetilde{\varphi})\leqslant k^2\,W_2(\rho,\widetilde{\rho}).
\end{align*}
Therefore, using symmetry, we immediately get that~$\rho\mapsto A(\rho,\rho)$ is~$2k$-Lipschitz with respect to~$W_2$, and takes values in~$\mathcal P_2(\mathcal M)$, since for instance~$A(\delta_{\bar x},\delta_{\bar x})=K(\cdot,\bar x,\bar x)$ is in~$\mathcal P_2(\mathcal M)$. This implies that for~$\rho_0$ in~$\mathcal P_2(\mathcal M)$, the map~$F$ given by
\[F(\rho)_t=\rho_0e^{-t}+\int_0^te^{s-t}A(\rho_s,\rho_s)\,ds\]
sends~$C([0,T],\mathcal P_2(\mathcal M))$ into itself, and is a contraction for~$T$ sufficiently small (but independent of~$\rho_0$). Since~$\mathcal P_2(\mathcal M)$ is complete, then~$C([0,T],\mathcal P_2(\mathcal M))$ is also complete and the Banach fixed point theorem implies the existence of a unique fixed point of~$F$, which corresponds to a unique solution of~\eqref{eq-kinetic-model-integral} thanks to the Duhamel formula. It is then possible to extend~$\rho$ for any~$t>T$ (one simply considers recursively the above construction with~$\rho_{0,n}:=\rho_{nT}$, since~$T$ does not depend on~$\rho_0$).
\end{proof}

Not all interesting models satisfy the assumption of the above Proposition~\ref{prop-existence-uniqueness}, namely the fact that~$x_*'\mapsto K(\cdot,x_*,x_*')$ is Lipschitz. For instance, the midpoint model, where two colliding particles end up in the middle of a minimal geodesic joining the two pre-collisional velocities (see Subsection~\ref{subsection:Rn} and Section~\ref{section:midpoint-sphere}), satisfies the assumption of Proposition~\ref{prop-existence-uniqueness} if~$\mathcal M=\mathbb{R}^n$ (see Subsection~\ref{subsection:Rn}), but does not satisfy it if~$\mathcal M=\mathbb{S}^n$, which corresponds to the model studied in Section~\ref{section:midpoint-sphere}. Indeed, we even show in the example below that there exists no well-posed measure solutions in the latter case (we restrict the example to the case~$n=1$).

\begin{exa}\label{example1}
We consider here~$\mathcal M=\mathbb{S}^1$. %Note that the case~$\tilde {\mathcal M}=\mathbb{S}^n$ can be reduced to~$\mathcal M=\mathbb{S}^1$ if the support of the initial condition is included in~$\mathbb{S}^n \cap \left(\mathbb{R}^2\times \{0\}^{n-2}\right)$.
Each position~$x\in \mathcal M$ can then be represented by an angle~$\theta\in [0,2\pi)$, considering that~$\mathcal M\subset\mathbb{C}$ and writing~$x=e^{i\theta}$. For~$\theta_*,\theta_*'\in  [0,2\pi)$,~$\theta_*\neq\theta_*'\pm\pi$, the middle of the minimal geodesic joining~$e^{i\theta_*}$ to~$e^{i\theta_*'}$ on the circle is given by~$e^{i\theta}$ where~$\theta=\frac{\theta_*+\theta_*'}2$ if~$|\theta_*-\theta_*'|<\pi$, and~$\theta=\frac{\theta_*+\theta_*'}2\pm\pi$ if~$|\theta_*-\theta_*'|>\pi$. We can then define the following midpoint collision operator~$K$:
\[K(\cdot,\theta_*,\theta_*')=\delta_{\theta},\text{ where } \theta=
  \begin{cases}\frac{\theta_*+\theta_*'}2&\text{if}\quad|\theta_*-\theta_*'|<\pi\\
  \frac{\theta_*+\theta_*'}2+\pi&\text{if}\quad|\theta_*-\theta_*'|>\pi\quad\text{and}\quad\frac{\theta_*+\theta_*'}2<\pi\\
  \frac{\theta_*+\theta_*'}2-\pi&\text{if}\quad|\theta_*-\theta_*'|>\pi\quad\text{and}\quad\frac{\theta_*+\theta_*'}2\geqslant\pi,
  \end{cases}\]
if~$|\theta_*-\theta_*'|\neq \pi$, while if~$|\theta_*-\theta_*'|=\pi$,
\[K(\cdot,\theta_*,\theta_*')=\frac12\delta_{\theta}+\frac12\delta_{\theta'},\text{ where } \theta=\frac{\theta_*+\theta_*'}2 \text{ and } \theta'=
  \begin{cases}\theta+\pi&\text{if}\quad\theta<\pi\\\theta-\pi&\text{if}\quad\theta\geqslant\pi.\end{cases}\]
We notice that if the support of the initial data~$\rho_0$ is supported in an open semicircle, then the support of the solution remains in this semicircle at all times. For instance, if~$\mathrm{supp}\,\rho_0\subset [0,\pi)$ (resp.~$\mathrm{supp}\,\rho_0\subset (\pi,2\pi)\cup \{0\}$), then for all~$t\geqslant 0$,~$\mathrm{supp}\,\rho_t\subset [0,\pi)$ (resp.~$\mathrm{supp}\,\rho_t\subset (\pi,2\pi)\cup \{0\}$). Moreover, in this case, the dynamics of~$t\mapsto \rho_t$ is given by midpoint model in~$\mathbb{R}$ (see Subsection~\ref{subsection:Rn}). 
% 
% 
% 
% 
% for instance~$p(\text{supp }\rho^0)\subset [0,\pi)$ (resp.~$p(\text{supp }\rho^0)\subset (\pi,2\pi)\cup \{0\}$), then the solution remains in this semicircle at all times, for instance , and the dynamics of the solution is described by the midpoint model in~$\mathbb{R}^n$: Assume~$\rho$ is a solution of~\eqref{eq-kinetic-model-integral} on~$\mathcal M=\mathbb{S}^1$ for~$K$ as above with initial condition~$\rho^0$. Then~$\tilde \rho:=p\sharp\rho$ is the solution of~\eqref{eq-kinetic-model-integral}  on~$\mathcal M=\mathbb{R}$ for~$K$ defined as in Subsection~\ref{subsection:Rn}. %In particular,~$\rho(t,\cdot)\to_{t\to\infty} \delta_{\bar x}$, where~$x:=p^{-1}\int_{\mathbb{R}}y\rho(p^{-1}(y))\,dy$.

A simple example for an initial condition is~$\rho_0=\rho_0^\theta:=\frac 12 \delta_{0}+\frac 12 \delta_{\theta}$, which satisfies~$\mathrm{supp}\,\rho_0\subset [0,\pi)$ if~$\theta\in[0,\pi)$ (resp.~$\mathrm{supp}\,\rho_0\subset (\pi,2\pi)\cup \{0\}$ if~$\theta\in(\pi,2\pi)$). Then, thanks to the argument above and to the forthcoming Proposition~\ref{prop-flat-middle}, 
\begin{gather*}
 W_2(\rho_t^\theta,\delta_{\theta/2})\leqslant \pi e^{-t/4}\quad \text{if}\quad \theta\in[0,\pi),\\
 W_2(\rho_t^\theta,\delta_{\theta/2+\pi})\leqslant \pi e^{-t/4}\quad\text{if}\quad\theta\in(\pi,2\pi).
\end{gather*}
In particular, if~$\theta_1<\pi<\theta_2$, then
\[W_2(\rho_t^{\theta_1},\rho_t^{\theta_2})\geqslant W_2(\delta_{\theta_1/2},\delta_{\theta_2/2+\pi})-2\pi e^{-t/4}\geqslant \pi\left(1-2e^{-t/4}\right)\geqslant \frac{\pi}2,\]
for~$t\geqslant4\ln 4$, while~$W_2(\rho_0^{\theta_1},\rho_0^{\theta_2})=\frac{|\theta_1-\theta_2|}4$. This example shows that the measure solutions of the midpoint model on~$\mathcal M=\mathbb{S}^1$ can not be continuous with respect to the initial data.
\end{exa}

In~\cite{hinow2009analysis}, the existence and uniqueness of solutions for the midpoint model has been shown for~$\mathcal M=\mathbb{R}$ (their result indeed holds for more general models), when the initial condition is integrable, that is~$\rho_0\in L^1(\mathbb{R})$. The proof is then based on~$L^1$ estimates, different from the one used to prove Proposition~\ref{prop-existence-uniqueness}. The extension of this approach for~$\mathcal M$ a manifold ($\mathcal M=\mathbb{S}^n$, for instance) is an open problem, that we will not try to address in the present article.

\subsection{The midpoint model in~$\mathbb{R}^n$, and difficulties appearing for more general position sets}\label{subsection:Rn}

Let consider the case of~$\mathcal M=\mathbb{R}^n$ where, when two particles interact, they stick together, in the middle of the segment connecting the two particles. Under this assumption, we show that for any initial condition with finite second moment, the solution of~\eqref{eq-kinetic-model} converges to a Dirac mass as time goes to infinity. This model has already been studied in the one-dimensional case, as a simple economic market model~\cite{pareschi2006selfsimilarity}, or a model for the exchange of proteins between bacteria~\cite{hinow2009analysis}, and the use of the Wasserstein distance~$W_2$ is not new for this study~\cite{carrillo2007contractive}. We will recall below how this dynamics can be described, and discuss the difficulties arising from a similar model on a more complicated manifold (a sphere in particular).

\begin{prop}\label{prop-flat-middle}
We suppose that the collision kernel is the midpoint kernel, that is
\[K(\cdot,x_*,x_*'):=\delta_{\frac12(x_*+x_*')}.\]
For~$\rho_0$ in~$\mathcal P_2(\mathbb{R}^n)$, there exists a unique solution~$\rho$ of~\eqref{eq-kinetic-model} in~$C(\mathbb{R}_+,\mathcal P_2(\mathbb{R}^n))$. Moreover, the center of mass~$\bar x=\int_{\mathbb{R}^n}x\,d\rho(x)$ is conserved by the equation, and we have
\[W_2(\delta_{\bar x},\rho_t)=W_2(\delta_{\bar x},\rho_0)\,e^{-t/4}.\]
\end{prop}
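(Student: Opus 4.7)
The plan is to apply the general existence/uniqueness result (Proposition~\ref{prop-existence-uniqueness}) and then derive a closed ordinary differential equation for $t\mapsto W_2(\delta_{\bar x},\rho_t)^2$ by exploiting the fact that, in $\mathbb{R}^n$, the average $\bar x$ is conserved and orthogonality/linearity make the midpoint operator behave nicely on second moments.

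First, I would verify the Lipschitz hypothesis of Proposition~\ref{prop-existence-uniqueness}. Using~\eqref{eq-W2-dirac}, for fixed $x_*$ we have
\[
W_2\bigl(K(\cdot,x_*,x_*'),K(\cdot,x_*,y_*')\bigr)
=W_2\bigl(\delta_{(x_*+x_*')/2},\delta_{(x_*+y_*')/2}\bigr)
=\tfrac{1}{2}|x_*'-y_*'|,
\]
so $x_*'\mapsto K(\cdot,x_*,x_*')$ is $\tfrac12$-Lipschitz uniformly in $x_*$, giving existence and uniqueness of a global solution $\rho\in C(\mathbb{R}_+,\mathcal P_2(\mathbb{R}^n))$.

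Next, I would establish conservation of $\bar x_t:=\int x\,d\rho_t(x)$ by testing the integral form~\eqref{eq-kinetic-model-integral} against the coordinate functions $x_i$. Since $\rho_t\in\mathcal P_2(\mathbb{R}^n)$ makes these moments well-defined and continuous in $t$, Fubini gives
\[
\int_{\mathbb{R}^n}x\,d[A(\rho_t,\rho_t)](x)
=\int_{\mathbb{R}^n\times\mathbb{R}^n}\frac{x_*+x_*'}{2}\,d\rho_t(x_*)\,d\rho_t(x_*')=\bar x_t,
\]
so $\frac{d}{dt}\bar x_t=\bar x_t-\bar x_t=0$ and $\bar x_t\equiv\bar x$.

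Finally, by~\eqref{eq-W2-dirac} we have $W_2(\delta_{\bar x},\rho_t)^2=\int|x-\bar x|^2\,d\rho_t(x)$, so I would compute the evolution of this quantity. Again using~\eqref{eq-kinetic-model-integral} and Fubini,
\[
\int_{\mathbb{R}^n}|x-\bar x|^2\,d[A(\rho_t,\rho_t)](x)
=\int_{\mathbb{R}^n\times\mathbb{R}^n}\left|\frac{(x_*-\bar x)+(x_*'-\bar x)}{2}\right|^2d\rho_t(x_*)\,d\rho_t(x_*').
\]
Expanding the square and using $\int(x_*-\bar x)\,d\rho_t(x_*)=0$ (the key algebraic gain coming from conservation of $\bar x$), the cross term vanishes and one gets $\tfrac12\int|x-\bar x|^2\,d\rho_t(x)$. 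Substituting into the equation yields
\[
\frac{d}{dt}W_2(\delta_{\bar x},\rho_t)^2=-\tfrac12\,W_2(\delta_{\bar x},\rho_t)^2,
\]
and integration gives the announced exponential rate $e^{-t/4}$ after taking square roots.

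The only subtle step is justifying the moment identities for measure-valued solutions: the test functions $x$ and $|x-\bar x|^2$ are unbounded, so one cannot directly invoke weak convergence. I would handle this by truncating, noting that the uniform second-moment bound coming from the Lipschitz estimates in Proposition~\ref{prop-existence-uniqueness} (or a direct Grönwall argument on $\int|x|^2 d\rho_t$) allows passage to the limit by dominated convergence. Everything else is a direct computation exploiting the linearity of the midpoint map and conservation of $\bar x$.
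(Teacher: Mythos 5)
Your proposal is correct and follows essentially the same route as the paper: verify the $\tfrac12$-Lipschitz hypothesis of Proposition~\ref{prop-existence-uniqueness}, show conservation of $\bar x$ by testing against $x$, then compute the evolution of $m_2(t)=\int|x-\bar x|^2\,d\rho_t(x)$ and use $\int(x-\bar x)\,d\rho_t=0$ to obtain $\dot m_2=-\tfrac12 m_2$. The extra remark about truncating unbounded test functions is a sound clarification that the paper leaves implicit.
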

\begin{proof}[Proof of Proposition~\ref{prop-flat-middle}]
The existence and uniqueness of a solution comes from Proposition~\ref{prop-existence-uniqueness}. Indeed, using the fact that~$W_2(\delta_x,\delta_y)=|x-y|$, we get that the map~$x_*'\mapsto\delta_{\frac12(x_*+x_*')}$ is~$\frac12$-Lipschitz for any~$x_*$. Now a simple computation gives
\begin{equation}
 \frac d{dt}\int_{\mathbb{R}^n}x\,d\rho_t(x) =\int_{\mathbb{R}^n\times\mathbb{R}^n} \bigg(\int_{\mathbb{R}^n}x\,K(x,x_*,x_*')\,dx-\frac{x_*+x_*'}2\bigg)\,d\rho_t(x_*)\,d\rho_t(x_*')=0,\label{center_mass_RN}
\end{equation}
which gives the conservation of the center of mass~$\bar x$.
We compute next the evolution of the second moment~$m_2(t):=\int_{\mathbb{R}^n}|x-\bar x|^2\,d\rho_t(x)$. We get
\begin{align}
 \frac {d\,m_2}{dt}(t)
 &=\int_{\mathbb{R}^n\times\mathbb{R}^n} \bigg(\int_{\mathbb{R}^n}|x-\bar x|^2 \,K(x,x_*,x_*')\,dx-\frac{|x_*-\bar x|^2+|x_*'-\bar x|^2}2\bigg)\,d\rho_t(x_*)\,d\rho_t(x_*')\nonumber\\
 &=\frac 12\left|\int_{\mathbb{R}^n}(x-\bar x)\,d\rho_t(x)\right|^2-\frac 12\int_{\mathbb{R}^n}|x-\bar x|^2\,d\rho_t(x)=-\frac{m_2(t)}2,\label{calcul_m2}
\end{align}
thanks to the definition of the center of mass~$\bar x$. It follows that~$m_2(t)=m_2(0)e^{-t/2}$, which ends the proof, since~$m_2$ is nothing else than~$W_2^2(\delta_{\bar x},\rho)$ thanks to~\eqref{eq-W2-dirac}.
\end{proof}

The conservation of the center of mass~$\bar x$ by~\eqref{eq-kinetic-model} plays a central role in the above computation (see~\eqref{calcul_m2}). Unfortunately, it seems difficult to define an equivalent of the center of mass for a probability distribution on the sphere, that would be conserved by the equation~\eqref{eq-kinetic-model}. For instance the direction of the first moment of the distribution is not conserved by the equation, as we show in the example below.

\begin{exa}\label{example2}
We consider the midpoint model on the circle described in Example~\ref{example1}. Let~$\rho$ be a solution in~$C(\mathbb{R}_+,\mathcal P(\mathbb{S}^1))$ with initial condition~$\rho_0=\frac 13\sum_{k=1}^3\delta_{e^{i\theta_k}}$ with~$\theta_1=0$,~$\theta_2=\pi-2\varepsilon$, and~$\theta_3=\pi+\varepsilon$. Then, one can compute that
\[\int_{\mathbb{S}^1} x\rho(0,x)\,dx=\left(\frac{-1}3,\frac{\varepsilon}3\right)+\mathcal O(\varepsilon^2),\]
\begin{eqnarray*}
 \frac{d}{dt}\Big|_{t=0}\int_{\mathbb{S}^1} x\rho(t,x)\,dx&=&\int_{\mathbb{S}^1} x\left(\frac{-2}3\rho_0+\frac 29\left(\delta_{x_{12}}+\delta_{x_{13}}+\delta_{x_{23}}\right)\right)\,dx\\
&=&\left(\frac{-4}{9}+\frac{\varepsilon}3,\frac{-\varepsilon}{9}\right)+\mathcal O(\varepsilon^2),
\end{eqnarray*}
where~$x_{12}=e^{i(\pi/2-\varepsilon)}$,~$x_{13}=e^{i(3\pi/2+\varepsilon/2)}$ and~$x_{23}=e^{i(\pi-\varepsilon/2)}$. When~$\varepsilon>0$ is small enough, the vectors~$\int_{\mathbb{S}^1} x\rho(0,x)\,dx$ and~$\frac{d}{dt}|_{t=0}\int_{\mathbb{S}^1} x\rho(t,x)\,dx$ are then not collinear, which implies that neither the vector~$y_0:=\int_{\mathbb{S}^1} x\rho(0,x)\,dx$ nor its direction~$\frac{y_0}{\left\|y_0\right\|}$ can be conserved by the midpoint model on the circle.

In Figure~\ref{fig-simu1}, we present a numerical simulation of the model described in Example~\ref{example1}. Numerically, we also observe that the direction of the center of mass of the solution is not conserved by the equation.
\end{exa}
\begin{figure}[h]
\centering
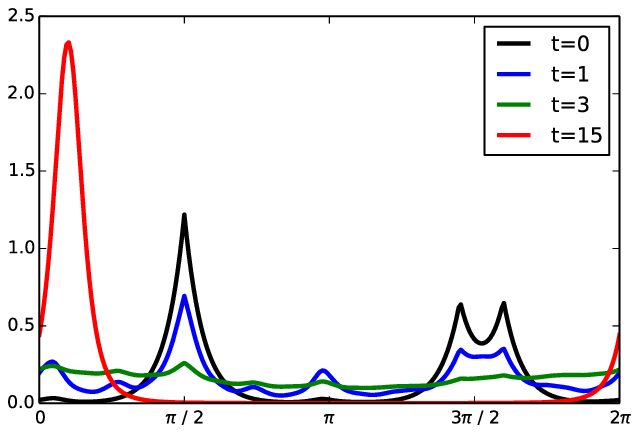\hfill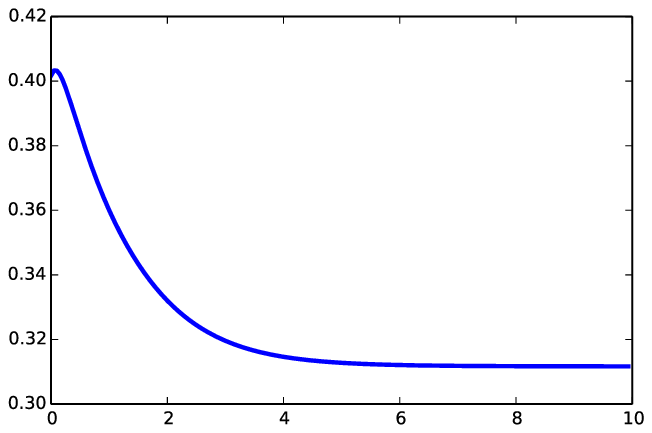
\caption{Numerical simulation of the midpoint model on the circle described in Example~\ref{example1} (colors online). On the left, we have represented the solution~$x\mapsto\rho(t,x)$ at various times, as a function of the argument of~$x$, that is~$\theta=\mathrm{arg}(x)\in [0,2\pi)$. On the right, we have represented the argument~$\theta_1(t)$ of the center of mass~$\int_{x\in\mathbb{S}^1\subset \mathbb{R}^2} x\rho(t,x)\,dx$, as a function of the time~$t$. This numerical simulation is based on a finite difference scheme.}
\label{fig-simu1} 
\end{figure}

%\missingfigure{non conservation of the first moment\label{fig-first-moment}}
To circumvent this difficulty, we notice that a computation close to~\eqref{calcul_m2} can be performed based on a variant of~$m_2$, defined without the center of mass, namely
\begin{equation}
\label{eq-tilde-m2}\widetilde m_2:=\int_{\mathbb{R}^n\times\mathbb{R}^n}|x-y|^2\,d\rho_t(x)\, d\rho_t(y).
\end{equation}
This quantity, just as~$m_2$, is a Lyapunov functional for~\eqref{eq-kinetic-model} for the midpoint model when~$\mathcal M=\mathbb{R}^n$ (it is actually equal to~$2m_2$, thanks to the conservation of mass and center of mass by the midpoint model in~$\mathbb{R}^n$). When~$\mathcal M$ is a Riemannian manifold, we will then use the counterpart of this quantity~$\widetilde m_2$, namely~\eqref{eq-tilde-m2} where~$|x-y|$ is replaced by the geodesic distance~$d(x,y)$. We do not know if this quantity is also a Lyapunov functional for general positions sets~$\mathcal M$ (see Figure~\ref{fig-simu2}), but we will show however that this quantity is a Lyapunov functional in a neighborhood of any Dirac mass on~$\mathcal M$ (the neighborhood of a Dirac mass can be defined thanks to the Wasserstein distance~$W_2$). The latter property is the root of our analysis.%allow us to prove our main result, the local nonlinear stability of Dirac masses.

\section{The midpoint model on the sphere}\label{section:midpoint-sphere}

We now consider the case where~$\mathcal M$ is the unit sphere of~$\mathbb{R}^{n+1}$, that we will denote by~$\mathbb{S}$ in this section (instead of~$\mathbb{S}^n$), to simplify notations. Notice that indeed, the dimension of~$\mathcal M$ will play little role in our analysis.

In this section, we will assume that when two particles with directions~$x_*$ and~$x_*'$ collide, they align: their new direction is the mean direction of~$x_*$ and~$x_*'$. This assumption holds unless those two directions are antipodal, and in that case, their new direction should belong to the corresponding equator, see Figure~\ref{fig-midpoint-sphere}. We are thus interested in the solutions of the kinetic equation~\eqref{eq-kinetic-model}, where the probability law~$K(\cdot,x_*,x_*')$ satisfies the following assumption:
\begin{ass}[Midpoint models on the sphere~$\mathbb{S}$]\label{Ass:midpoint_sphere}For any~$x_*,\,x_*'\in \mathbb{S}^2$,~$K(\cdot,x_*,x_*')$ is a probability measure, and:
 \begin{itemize}
\item if~$x_*\neq-x_*'$, then~$K(\cdot,x_*,x_*')=\delta_{x_m}$, where~$x_m=\frac{x_*+x_*'}{\|x_*+x_*'\|}$,
\item if~$x_*=-x_*'$, then the support of~$K(\cdot,x_*,x_*')$ is included in~$\{x\in\mathbb{S},x\cdot x_*=0\}$.
\end{itemize}
\end{ass}
% 
% \begin{itemize}
% \item if~$x_*\neq-x_*'$, then~$K(\cdot,x_*,x_*')=\delta_{x_m}$, where~$x_m=\frac{x_*+x_*'}{\|x_*+x_*'\|}$,
% \item if~$x_*=-x_*'$, then the support of~$K(\cdot,x_*,x_*')$ is included in~$\{x\in\mathbb{S},x\cdot x_*=0\}$.
% \end{itemize}
\begin{figure}[h]
\centering
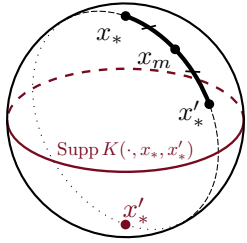
\caption{Two types of collision on the sphere.}
\label{fig-midpoint-sphere} 
\end{figure}

We will denote by~$d(x,y)$ the geodesic distance on~$\mathbb{S}$ between~$x$ and~$y$, given by the length of the arc of a great circle passing through~$x$ and~$y$: we get
\begin{equation}\label{acos}
 d(x,y)=\arccos(x\cdot y). 
\end{equation}

The main result concerns the nonlinear stability of Dirac masses.
\begin{theo}
\label{theo-stability-dirac-sphere}
Let~$K$ satisfy Assumption~\ref{Ass:midpoint_sphere}. There exists~$C_1>0$ and~$\eta>0$ such that for any solution~$\rho\in C(\mathbb{R}_+,\mathcal P(\mathbb{S}))$ of~\eqref{eq-kinetic-model} with initial condition~$\rho_0$ satisfying~$W_2(\rho_0,\delta_{x_0})< \eta$ for some~$x_0\in\mathbb{S}$, there exists~$x_\infty\in \mathbb{S}$ such that
\[W_2(\rho_t,\delta_{x_\infty})\leqslant C_1  W_2(\rho_0,\delta_{x_0}) \, e^{-t/4}.\]
\end{theo}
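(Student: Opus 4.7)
The plan is to use the spherical analogue of the Euclidean functional $\widetilde m_2$ introduced in~\eqref{eq-tilde-m2},
\[F(\rho) := \iint_{\mathbb{S}\times\mathbb{S}} d(x,y)^2 \, d\rho(x)\,d\rho(y),\]
as a \emph{local} Lyapunov functional in the spirit of Proposition~\ref{prop-flat-middle}. Differentiating $F$ along~\eqref{eq-kinetic-model} and symmetrizing in the two collision arguments yields
\[\frac{d F}{dt}(\rho_t) = 2\iiint\Bigl[d(x_m,y)^2 - \tfrac12 d(x_*,y)^2 - \tfrac12 d(x_*',y)^2\Bigr] d\rho_t(x_*)\,d\rho_t(x_*')\,d\rho_t(y),\]
and in $\mathbb{R}^n$ the parallelogram identity would collapse the integrand to $-\tfrac14|x_*-x_*'|^2$, producing the exact decay $F'=-\tfrac12 F$. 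The heart of the proof is therefore a quantitative spherical substitute for the parallelogram identity. Working in normal coordinates at the midpoint $x_m$ with $x_* = \exp_{x_m}(u)$, $x_*' = \exp_{x_m}(-u)$, $y = \exp_{x_m}(v)$, and Taylor-expanding the squared distances via~\eqref{acos} using the explicit curvature tensor of $\mathbb{S}$, I would establish that there exist $C,r_0>0$ such that whenever $\max(d(x_*,x_*'),d(x_m,y)) \leqslant r_0$ the remainder
\[R := d(x_m, y)^2 - \tfrac12 d(x_*, y)^2 - \tfrac12 d(x_*', y)^2 + \tfrac14 d(x_*, x_*')^2\]
satisfies $0 \leqslant R \leqslant C\, d(x_*,x_*')^2\, d(x_m,y)^2$, its leading term being $R = \tfrac13(|u|^2|v|^2 - \langle u,v\rangle^2) + \mathrm{h.o.t.}$, which is of \emph{fourth} order in the distances.

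Inserting this estimate into the evolution of $F$ and using the crude bound $d(x_m,y)^2 \leqslant \tfrac12 d(x_*,x_*')^2 + 2\,d(x_*,y)^2$ to control the triple integral of $R$ gives
\[\frac{d F}{dt}(\rho_t) \leqslant -\tfrac12\, F(\rho_t) + C\, r_0^2\, F(\rho_t),\]
as long as $\mathrm{supp}\,\rho_t \subset B(x_0,r_0)$. Since $W_2(\rho_0,\delta_{x_0}) < \eta$ yields only integral control on the support, I would close a bootstrap on $V(t) := W_2(\rho_t,\delta_{x_0})^2$: assuming $V(s) \leqslant 4\eta^2$ on $[0,t]$, Markov's inequality splits the triple integrals into a bulk inside $B(x_0,r_0)$ (treated by the local estimate above) and a tail carrying mass at most $4\eta^2/r_0^2$ with globally bounded integrands (since $d\leqslant\pi$), yielding $F'(s) \leqslant -\tfrac12(1-C\eta)\,F(s)$. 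A parallel computation for $V'$, of the form $V'(s)\leqslant -\tfrac14 F(s) + C\eta^3$, ensures $V$ stays below $4\eta^2$ for $\eta$ small enough, closing the bootstrap and giving $F(\rho_t) \leqslant C_0\, F(\rho_0)\, e^{-t/2}$ with some universal constant $C_0$.

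To conclude, I would introduce the Fréchet mean $\bar x_t := \operatorname*{arg\,min}_{z\in\mathbb{S}} \int d(x,z)^2\,d\rho_t(x)$, well-defined once $\rho_t$ is concentrated in a geodesically convex ball, and which by the defining minimization property satisfies $W_2(\rho_t,\delta_{\bar x_t})^2 \leqslant F(\rho_t)$. Differentiating the first-order optimality condition $\int \exp^{-1}_{\bar x_t}(x)\,d\rho_t(x) = 0$ yields $|\dot{\bar x}_t| \leqslant C\, F(\rho_t)^{1/2}$, so $\bar x_t$ is Cauchy and converges to some $x_\infty \in \mathbb{S}$ at rate $e^{-t/4}$; then the triangle inequality $W_2(\rho_t,\delta_{x_\infty}) \leqslant W_2(\rho_t,\delta_{\bar x_t}) + d(\bar x_t,x_\infty)$ gives the announced bound. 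The delicate step is the local parallelogram estimate: on the positively curved sphere, $R$ has the ``wrong'' sign (as visible already from the fact that the midpoint of two equatorial points equidistant from a pole remains at distance $\pi/2$ from that pole), so abstract CAT-type comparison gives no usable upper bound on $d(x_m,y)^2$, and the fourth-order expansion must be carried out explicitly to show that $R$ is dominated by the main dissipation. A secondary technical nuisance is that $W_2$-smallness does not imply compact support, forcing the bootstrap to carefully quarantine the Markov-controlled tail mass far from $x_0$ throughout the evolution.
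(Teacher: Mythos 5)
Your proposal uses the same energy $F\equiv E$ as the paper and the same overall architecture (a local fourth-order estimate for the integrand $\alpha$ near the midpoint, a crude global estimate far away, a Gronwall-type energy inequality, and tracking a moving center that converges to $x_\infty$). Your local estimate via Taylor expansion in normal coordinates at $x_m$ is a viable alternative to the paper's route, which instead uses the exact spherical Apollonius identity $\cos b+\cos b'=2\cos a\cos m$ together with convexity of $\arccos^2$; the two yield the same leading term $R\sim\tfrac13 a^2 m^2$.

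The genuine gap is in the bootstrap closing the tail estimate. You center the Markov ball at the \emph{fixed} point $x_0$ of the hypothesis and use a \emph{fixed} radius $r_0$, bounding the tail mass by $V(t)/r_0^2$ with $V(t)=W_2(\rho_t,\delta_{x_0})^2$. But $V(t)$ does not tend to $0$: the Fr\'echet mean drifts, so $V(t)\to d(x_\infty,x_0)^2$, which is generically of order $\eta^2$. Consequently the tail contribution to $F'$ cannot be shown to be subordinate to the dissipation $-\tfrac14 F$ once $F\ll\eta^2$: the best you can extract (even using the sharper global Lemma~\ref{lem:E-particles-global}-type bound $\alpha+\tfrac14 d^2\leqslant 2\,d(x_*,x_*')\min(\cdot,\cdot)$ together with Cauchy--Schwarz) is a term of order $\eta^2\sqrt{F}/r_0^2$, which dominates $F$ as $F\to0$ and only gives decay of $F$ down to a level $O(\eta^4)$, not to $0$. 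The paper avoids this by centering the Markov ball at the \emph{instantaneous} minimizer $\bar x(t)$ of $z\mapsto W_2(\rho_t,\delta_z)$ (Lemma~\ref{lem-cheb-sphere}), whose associated second moment is controlled by $E(\rho_t)$ itself, and — crucially — by choosing the radius adaptively, $\kappa=E(\rho_t)^{1/6}$. This makes both the bulk error $C\kappa^2 E$ and the tail terms $E^{3/2}/\kappa+E^2/\kappa^2$ scale as $E^{4/3}$, yielding the superlinear inequality $\tfrac12 E'+\tfrac14 E\leqslant C_0 E^{4/3}$ of Proposition~\ref{prop-decay-energy}, which does close to exponential decay to zero. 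Finally, your step of differentiating the Fr\'echet-mean optimality condition $\int\exp_{\bar x_t}^{-1}(x)\,d\rho_t(x)=0$ to get $|\dot{\bar x}_t|\lesssim\sqrt{F}$ requires justifying differentiability (and uniqueness/non-degeneracy) of the minimizer; the paper sidesteps this by a Cauchy-sequence argument on $\bar x(t_n)$ using only the global Lemma~\ref{lem:E-particles-global} and the already-established energy decay, which is more elementary and would be worth adopting.
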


\begin{rem}\label{rem:convergence_rate}
 The convergence rate is the same as in the midpoint model in~$\mathbb{R}^n$ (see Proposition~\ref{prop-flat-middle}). This property will also hold for midpoint models on more general manifolds, see Theorem~\ref{theo-stability-dirac-midpoint-M}. This convergence rate will however be different if the collision kernel is less contracting than the midpoint model, see Section~\ref{section:non-midpoint}.

This convergence rate is optimal. Indeed, if we consider an initial condition supported on a geodesic~$\gamma:[0,1]\to\mathcal M$ connecting two points~$x_1,\,x_2\in\mathcal M$ such that~$d(x_1,x_2)>0$ is small, then for any~$\tilde x_1,\,\tilde x_2\in \gamma([0,1])$, the unique minimal geodesic connecting~$\tilde x_1$ to~$\tilde x_2$ is~$\gamma|_{[\tilde x_1,\tilde x_2]}$, and in particular, the midpoint of this geodesic is included in~$\gamma([0,1])$. The support of~$\rho(t,\cdot)$ is then included in~$\gamma([0,1])$ at all times, and~$\gamma^{-1}_*\rho(t,\cdot)$ is a solution of the midpoint model on~$\mathbb{R}$. The convergence rate of~$t\mapsto\gamma^{-1}_*\rho(t,\cdot)$ to a Dirac mass is then explicitly given by Proposition~\ref{prop-flat-middle}, and is optimal since the exponential decay is an equality. The convergence rate of~$t\mapsto \rho(t,\cdot)=\gamma_*\left(\gamma^{-1}_*\rho(t,\cdot)\right)$ to a Dirac mass is then the same, which shows the optimality of the convergence rate obtained in Theorem~\ref{theo-stability-dirac-sphere}. Notice that this 
argument indeed holds in the more general context where the position set~$\mathcal M$ is a Riemannian manifold (see Section~\ref{section:manifold}). Numerically, the convergence rate generically observed corresponds to the rate provided by Theorem~\ref{theo-stability-dirac-sphere}, see Figure~\ref{fig-simu2}.
\end{rem}

\begin{figure}[h]
\centering
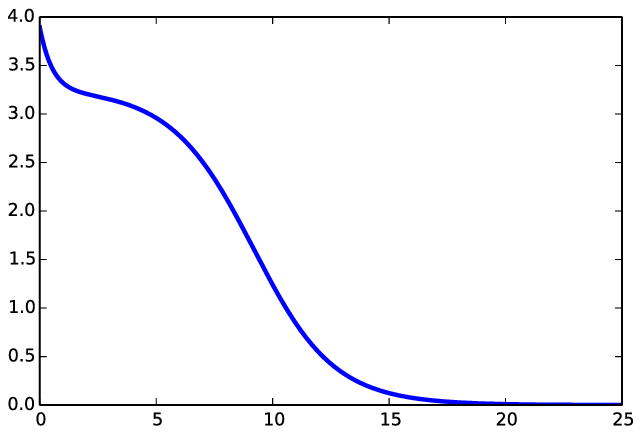\hfill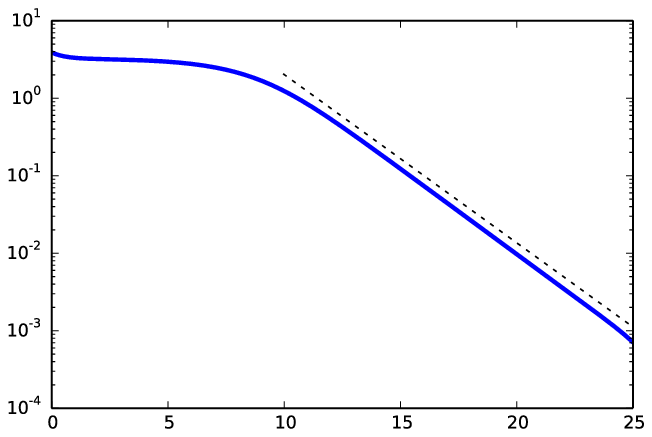
\caption{For the numerical solution presented in Figure~\ref{fig-simu1}, we represent the time evolution of the energy~$E(\rho(t,\cdot))$ (see~\eqref{def-energy}), in a standard plot (left), and semi-log plot(right). We observe that the energy decreases slowly when the density is close to the uniform distribution (for~$t\sim 3$, see Figure~\ref{fig-simu1}) for which the energy is equal to~$\frac{\pi^2}3\approx3.29$. For~$t\gtrsim 10$, the energy seems to decrease exponentially as~$t\mapsto e^{-t/2}$ (represented with a dotted line), as estimated by Theorem~\ref{theo-stability-dirac-sphere}.}
\label{fig-simu2} 
\end{figure}

The remainder of this section is devoted to the proof of Theorem~\ref{theo-stability-dirac-sphere}.%, which will be completed in Subsection~\ref{proof-shperes}. 

\subsection{Energy and 2-Wasserstein distance}

Our analysis will be based on the following energy functional:
\begin{equation}
  E(\rho)=\int_{\mathbb{S}\times\mathbb{S}} d(x,y)^2\,d\rho(x)\,d\rho(y),
\label{def-energy}
\end{equation}
First of all, we give a technical lemma that will be used later in the paper. It concerns the link between this energy and the Wasserstein distance~$W_2$ on~$\mathcal P(\mathbb{S})$, and provides some useful Markov-type inequalities. 
\begin{lem}\label{lem-cheb-sphere}
If~$\rho\in\mathcal P(\mathbb{S})$, we have for any~$x\in\mathbb{S}$,
\begin{equation}
E(\rho)\leqslant 4\,W_2(\rho,\delta_{x})^2,\label{E-4W2}
\end{equation}
and there exists~$\bar x\in \mathbb{S}$ such that
\begin{equation}
W_2(\rho,\delta_{\bar x})^2\leqslant E(\rho).\label{Wxbar}
\end{equation}
In that case, for any~$\kappa>0$, we have
\[\int_{\{x\in \mathbb{S};\,d(x,\bar x)\geqslant \kappa\}}d\rho(x)\leqslant \frac 1{\kappa^2} E(\rho),\quad \text{and}\quad \int_{\{x\in \mathbb{S};\,d(x,\bar x)\geqslant \kappa\}}d(x,\bar x)\,d\rho(x)\leqslant \frac 1{\kappa} E(\rho).\]

\end{lem}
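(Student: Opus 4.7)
The plan is to prove the three inequalities in order, each being a short computation once the correct application of the triangle inequality and Chebyshev's inequality is in mind.

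For \eqref{E-4W2}, I would use the triangle inequality for the geodesic distance in the form $d(x,y)^2 \leqslant 2 d(x,z)^2 + 2 d(y,z)^2$, applied with $z = x$ (the given point in the statement). Integrating against $d\rho(x)\,d\rho(y)$ and using that $\rho$ is a probability measure yields
\[E(\rho) \leqslant 2\int_{\mathbb{S}} d(x,x)^2\,d\rho(x) + 2\int_{\mathbb{S}} d(y,x)^2\,d\rho(y) = 2 W_2(\rho,\delta_x)^2 + 2 W_2(\rho,\delta_x)^2,\]
where I invoke \eqref{eq-W2-dirac} in the last step (wait, I should use the generic point: the statement is $E(\rho)\le 4W_2(\rho,\delta_x)^2$, so I use $z=x$ with the arbitrary fixed $x$). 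This gives the factor $4$.

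For \eqref{Wxbar}, the key observation is that Fubini's theorem rewrites $E$ as the $\rho$-average of the function $y \mapsto \int_{\mathbb{S}} d(x,y)^2\,d\rho(x)$. Since the average of a measurable nonnegative function against a probability measure is always attained or underachieved at some point of the support, there exists $\bar x \in \mathrm{supp}\,\rho \subset \mathbb{S}$ with
\[\int_{\mathbb{S}} d(x,\bar x)^2\,d\rho(x) \leqslant E(\rho),\]
and the left-hand side is exactly $W_2(\rho,\delta_{\bar x})^2$ by \eqref{eq-W2-dirac}.

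Finally, the two Markov-type bounds follow immediately from \eqref{Wxbar}. For the first, on the set $\{d(x,\bar x) \geqslant \kappa\}$ we have $1 \leqslant d(x,\bar x)^2/\kappa^2$, hence
\[\int_{\{d(x,\bar x)\geqslant \kappa\}} d\rho(x) \leqslant \frac{1}{\kappa^2}\int_{\mathbb{S}} d(x,\bar x)^2\,d\rho(x) \leqslant \frac{E(\rho)}{\kappa^2}.\]
For the second, the same set satisfies $d(x,\bar x) \leqslant d(x,\bar x)^2/\kappa$, so
\[\int_{\{d(x,\bar x)\geqslant \kappa\}} d(x,\bar x)\,d\rho(x) \leqslant \frac{1}{\kappa}\int_{\mathbb{S}} d(x,\bar x)^2\,d\rho(x) \leqslant \frac{E(\rho)}{\kappa}.\]

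There is no genuine obstacle here; the lemma is purely technical. The only point requiring a moment of care is the selection of $\bar x$ in \eqref{Wxbar}: one must note that the nonnegative integrand $y \mapsto \int d(x,y)^2\,d\rho(x)$ is continuous on the compact manifold $\mathbb{S}$, so an actual minimizer exists and automatically realizes a value at most equal to the $\rho$-average $E(\rho)$; alternatively the averaging argument alone (without continuity) suffices to produce such a $\bar x$ in the support of $\rho$.
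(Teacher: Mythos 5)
Your proof is correct and follows essentially the same route as the paper's. The paper proves \eqref{E-4W2} by first writing $E(\rho)=\int W_2(\rho,\delta_y)^2\,d\rho(y)$ and then applying the triangle inequality for $W_2$ (namely $W_2(\rho,\delta_y)\leqslant W_2(\rho,\delta_x)+d(x,y)$), whereas you apply the pointwise inequality $d(x,y)^2\leqslant 2d(x,z)^2+2d(y,z)^2$ directly and integrate; these are the same computation in slightly different clothing. For \eqref{Wxbar}, the paper picks $\bar x$ as a minimizer of the continuous function $y\mapsto W_2(\rho,\delta_y)$ on the compact sphere and compares the minimum to the $\rho$-average — this is precisely your second, cleaner argument; your first argument (a point of the support at or below the average exists) also works but is more delicate to justify rigorously, and since you already supply the compactness-plus-continuity version, the logic is sound. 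The Markov bounds are identical. The only cosmetic blemish is the double use of the letter $x$ both as the fixed point and as the integration variable in your derivation of \eqref{E-4W2}, which you already flag; renaming the fixed point $x_0$ would make the display read correctly.
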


\begin{proof}[Proof of Lemma~\ref{lem-cheb-sphere}]
Let~$\rho\in\mathcal P(\mathbb{S})$. Using~\eqref{eq-W2-dirac}, we get
\begin{align*}
E(\rho)&=\int_{\mathbb{S}}W_2(\rho,\delta_y)^2\,d\rho(y)  \\
&\leqslant \int_{\mathbb{S}}[W_2(\rho,\delta_{x})+d(y,x)]^2\,d\rho(y)  \\
&\leqslant\int_{\mathbb{S}}2\,[W_2(\rho,\delta_{x})^2+d(y,x)^2]\,d\rho(y) =4\, W_2(\rho,\delta_{x})^2,
\end{align*}
which gives the estimate~\eqref{E-4W2}.
The function~$y\mapsto W_2(\rho,\delta_{y})$ is continuous (1-Lipschitz), and reaches its minimum at a point~$\bar x\in\mathbb{S}$.
This gives
\[E(\rho)=\int_{\mathbb{S}} W_2(\rho,\delta_{y})^2d\rho(y) \geqslant \int_{\mathbb{S}} W_2(\rho,\delta_{\bar x})^2d\rho(y)=W_2(\rho,\delta_{\bar x})^2.\]
Now, for~$\kappa>0$, we have
\[\int d(x,\bar x)^2\,d\rho(x)\geqslant \kappa\int_{\{x\in \mathbb{S};\,d(x,\bar x)\geqslant \kappa\}} d(x,\bar x) d\rho(x) \geqslant \kappa^2\int_{\{x\in \mathbb{S};\,d(x,\bar x)\geqslant \kappa\}} d\rho(x),\]
which, combined to the first inequality, proves the lemma.
\end{proof}

\subsection{Energy estimates on collisions}
Here, we show how collisions decrease the energy with the right rate.
These estimates will be the main argument to show the local stability of Dirac masses. 

First of all, let us compute the derivative of the energy, when~$\rho\in C(\mathbb{R}_+,\mathcal P(\mathbb{S}))$ is a solution of~\eqref{eq-kinetic-model}:
\begin{align}
 \frac12\frac d{dt}E(\rho)&=\int_{\mathbb{S}\times\mathbb{S}} d(x,y)^2\partial_t\rho(x)\rho(y)\,dx\,dy\nonumber\\
&=\int_{\mathbb{S}\times\mathbb{S}}d(x,y)^2\rho(y)[A(\rho,\rho)(x)-\rho(x)]\,dx\,dy.\label{dE1}
\end{align}
Since~$\rho$ is a probability measure, we can write
\[ \int_{\mathbb{S}} d(x,y)^2\,d\rho(x)=\int_{\mathbb{S}\times\mathbb{S}} \frac{d(x_*,y)^2+d(x_*',y)^2}2 \,d\rho(x_*)\,d\rho(x_*'),\]
and we insert this expression into~\eqref{dE1}. Together with the definition~\eqref{def-A} of~$A$, we obtain:
\begin{equation}
 \frac12\frac d{dt}E(\rho)=\int_{\mathbb{S}\times\mathbb{S}\times\mathbb{S}} \alpha(x_*,x_*',y) \,d\rho(x_*)\,d\rho(x_*')\,d\rho(y).\label{dEdt}
\end{equation}
where the function~$\alpha$ is defined by
\begin{equation}
\alpha(x_*,x_*',y)=\int_{\mathbb{S}} d(x,y)^2 K(x,x_*,x_*')\,dx-\frac{d(x_*,y)^2+d(x_*',y)^2}2.\label{def-alpha}
\end{equation}
This quantity~$\alpha(x_*,x_*',y)$ corresponds to the variation of the energy caused by the collision of two particles located at~$x_*$ and~$x_*'$, and obtaining good estimates on this quantity will be the main ingredient to prove the decay of the energy. We will need two precise estimates of this quantity~$\alpha$. The first one is a global estimate and is simply a consequence of triangle inequalities.

\begin{lem}\label{lem:E-particles-global}
Let~$K$ satisfy Assumption~\ref{Ass:midpoint_sphere}. For any~$x_*,\,x_*',\,y\in \mathbb{S}$, we have 
\begin{equation}
\alpha(x_*,x_*',y)\leqslant -\frac14 d(x_*,x_*')^2 + 2\,d(x_*,x_*') \min\big(d(x_*,y),\,d(x_*',y)\big).\label{estimate-d1}
\end{equation}
\end{lem}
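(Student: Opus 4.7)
The plan is to reduce everything to two geometric facts — an upper bound for the integral $\int d(x,y)^2\,K(x,x_*,x_*')\,dx$ in terms of $d(x_*,x_*')$ and $\min(d(x_*,y),d(x_*',y))$, and a lower bound for $\tfrac12(d(x_*,y)^2+d(x_*',y)^2)$ obtained from the reverse triangle inequality — and then to finish by an elementary case discussion. Throughout, I set $2a := d(x_*,x_*')$, $m := \min(d(x_*,y),d(x_*',y))$ and $M := \max(d(x_*,y),d(x_*',y))$, so that the desired conclusion reads $\alpha(x_*,x_*',y) \leqslant -a^2 + 4am$.

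The first step is to show that every point $x$ in the support of $K(\cdot,x_*,x_*')$ satisfies $d(x,x_*) = d(x,x_*') = a$. In the non-antipodal case, this is because $K(\cdot,x_*,x_*') = \delta_{x_m}$ where $x_m$ is the midpoint of the (unique) minimal geodesic from $x_*$ to $x_*'$. In the antipodal case $a = \pi/2$, and Assumption~\ref{Ass:midpoint_sphere} forces $x$ to lie on the equator $\{x\in\mathbb{S}\,:\, x\cdot x_* = 0\}$, so $d(x,x_*) = d(x,x_*') = \pi/2 = a$. Applying the triangle inequality through either $x_*$ or $x_*'$ then gives $d(x,y) \leqslant a + d(x_*,y)$ and $d(x,y) \leqslant a + d(x_*',y)$, hence $d(x,y) \leqslant a+m$. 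Squaring and integrating against the probability $K(\cdot,x_*,x_*')$ yields
\begin{equation*}
\int_{\mathbb{S}} d(x,y)^2\, K(x,x_*,x_*')\,dx \;\leqslant\; (a+m)^2 \;=\; a^2 + 2am + m^2.
\end{equation*}

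The second step lower-bounds $\tfrac12(m^2 + M^2)$. Writing the triangle inequality $d(x_*,x_*') \leqslant d(x_*,y) + d(x_*',y)$ as $m + M \geqslant 2a$, we get $M \geqslant 2a-m$, and trivially $M \geqslant m$, hence $M \geqslant \max(m, 2a-m)$. A small case analysis on $m$ relative to $a$ finishes the proof. If $m \leqslant a$, then $M \geqslant 2a-m$, so
\begin{equation*}
\alpha \;\leqslant\; (a+m)^2 - \tfrac{1}{2}\bigl(m^2 + (2a-m)^2\bigr) \;=\; a^2 + 2am + m^2 - m^2 - 2a^2 + 2am \;=\; -a^2 + 4am.
\end{equation*}
If instead $m \geqslant a$, then $M \geqslant m$ gives $\alpha \leqslant (a+m)^2 - m^2 = a^2 + 2am$, which is bounded by $-a^2 + 4am$ precisely because $2a^2 \leqslant 2am$ in this regime. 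Recalling $2a = d(x_*,x_*')$ and $m = \min(d(x_*,y),d(x_*',y))$, this gives~\eqref{estimate-d1}.

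I don't expect any real obstacle here; the only care needed is in handling the antipodal case uniformly with the non-antipodal one (which is why the bound $d(x,y)\leqslant a+m$ is written for every $x$ in the support of $K$ rather than for the midpoint alone) and in checking the two regimes $m\leqslant a$ and $m\geqslant a$ separately, since the reverse triangle inequality $M\geqslant 2a-m$ is only informative in the first.
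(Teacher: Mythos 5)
Your proof is correct, and it rests on the same two geometric ingredients as the paper's: the fact that every point in $\mathrm{supp}\,K(\cdot,x_*,x_*')$ lies at distance $\tfrac12 d(x_*,x_*')$ from both $x_*$ and $x_*'$, together with the direct and reverse triangle inequalities. The only real difference is organizational: the paper bounds $\alpha$ in terms of $d(x_*,y)$ alone, obtaining $\alpha\leqslant -\tfrac14 d(x_*,x_*')^2+2\,d(x_*,x_*')\,d(x_*,y)$ in one stroke, and then invokes the symmetry in $x_*\leftrightarrow x_*'$ to pass to the minimum; by introducing $m=\min$ and $M=\max$ from the outset you instead need the two-regime case split (since $M\geqslant 2a-m$ is only informative when $m\leqslant a$), which the paper's symmetrize-at-the-end trick avoids.
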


\begin{proof}[Proof of Lemma~\ref{lem:E-particles-global}]
The triangle inequality in the triangle~$(x_*,x,y)$ gives
\[d(x,y)^2\leqslant d(x_*,y)^2+2\,d(x_*,y)\,d(x,x_*)+d(x,x_*)^2,\]
which, after integrating against the probability measure~$K(\cdot,x_*,x_*')$ and using the fact that for any~$x$ in the support of~$K(\cdot,x_*,x_*')$, we have~$d(x,x_*)=\frac12d(x_*,x_*')$, gives
\[\int_{\mathbb{S}} d(x,y)^2 K(x,x_*,x_*')\,dx\leqslant d(x_*,y)^2+\,d(x_*,y)\,d(x_*,x_*')+\tfrac14\,d(x_*,x_*')^2.\]
We now write the triangle inequality in the triangle~$(x_*,x_*',y)$ to get
\[\tfrac12d(x_*',y)^2\geqslant\tfrac12d(x_*,y)^2-\,d(x_*,y)\,d(x_*',x_*)+\tfrac12d(x_*',x_*)^2,\]
and we insert these two last estimates in the expression~\eqref{def-alpha} of~$\alpha$. 
We obtain
\[\alpha(x_*,x_*',y)\leqslant-\tfrac14d(x_*',x_*)^2+2\,d(x_*,y)\,d(x_*',x_*).\]
Since~$x_*$ and~$x_*'$ play symmetric roles, this ends the proof of the Lemma.
\end{proof}

The second estimate is a local estimate of the quantity~$\alpha(x_*,x_*',y)$ when the points~$x_*$,~$x_*'$ and~$y$ are sufficiently close.
\begin{lem}\label{lem:E-particles-local}
Let~$K$ satisfy Assumption~\ref{Ass:midpoint_sphere}. For any~$\kappa_1<\frac{2\pi}3$, there exists~$C_1\geqslant0$ such that for any~$\kappa\leqslant\kappa_1$, for any~$x_*,\,x_*',\,y\in \mathbb{S}$ such that
$\max\left(d(x_*,y),\,d(x_*',y),\,d(x_*,x_*')\right)\leqslant \kappa$, we have
\[\alpha(x_*,x_*',y) \leqslant -\frac14d(x_*,x_*')^2+ C_1\, \kappa^2\,d(x_*,x_*')^2.\]
\end{lem}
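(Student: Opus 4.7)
The plan is to parametrize the geodesic from $x_*$ to $x_*'$ by arc length and express $\alpha$ as an integral of the second derivative of the squared distance function along this geodesic. In Euclidean space this second derivative would be identically $2$, producing $\alpha=-\theta^2/4$ exactly; on the sphere it is a controlled perturbation of $2$ as long as the geodesic stays within distance $\kappa$ of $y$.

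Set $\theta=d(x_*,x_*')$. Since $\theta\leqslant\kappa<\pi$, Assumption~\ref{Ass:midpoint_sphere} gives $K(\cdot,x_*,x_*')=\delta_{x_m}$, so $\alpha(x_*,x_*',y) = d(x_m,y)^2-\tfrac12\bigl(d(x_*,y)^2+d(x_*',y)^2\bigr)$. Let $\gamma\colon[0,\theta]\to\mathbb{S}$ be the unit-speed minimal geodesic from $x_*$ to $x_*'$, so $\gamma(\theta/2)=x_m$, and set $\phi(s)=d(\gamma(s),y)^2$. Writing $\phi(0)$ and $\phi(\theta)$ by Taylor's formula with integral remainder at $s=\theta/2$, one obtains
\[
\alpha \;=\; \phi(\theta/2)-\tfrac12(\phi(0)+\phi(\theta)) \;=\; -\tfrac12\int_0^\theta K(s)\,\phi''(s)\,ds,
\]
where $K(s):=\min(s,\theta-s)$ satisfies $\int_0^\theta K(s)\,ds=\theta^2/4$. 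Rewriting,
\[
\alpha + \tfrac14\theta^2 \;=\; \tfrac12\int_0^\theta K(s)\bigl(2-\phi''(s)\bigr)\,ds,
\]
so the task reduces to controlling $2-\phi''(s)$ pointwise by $O(\kappa^2)$.

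To compute $\phi''$ I would use the spherical law of cosines: writing $\gamma(s)=\cos(s)\,x_*+\sin(s)\,v$ for the unit tangent $v\in T_{x_*}\mathbb{S}$ pointing toward $x_*'$ and setting $r(s)=d(\gamma(s),y)$, one gets $\cos r(s)=a\cos s+b\sin s$ with $a=x_*\cdot y$, $b=v\cdot y$, and in particular $a^2+b^2\leqslant|y|^2=1$. Differentiating this relation twice yields $r''(s)=\cot r(s)\,(1-r'(s)^2)$, and therefore
\[
\phi''(s) \;=\; 2\,r'(s)^2+2\,r(s)\,r''(s) \;=\; 2 - 2\bigl(1-r'(s)^2\bigr)\bigl(1-r(s)\cot r(s)\bigr).
\]
Both correction factors are nonnegative: $r'(s)^2\leqslant 1$ (equivalently $a^2+b^2\leqslant 1$, i.e.\ the eikonal property of the distance), and $r\cot r\leqslant 1$ on $(0,\pi)$. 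Consequently $2-\phi''(s)\leqslant 2\bigl(1-r(s)\cot r(s)\bigr)$.

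It remains to show that $r(s)\leqslant\kappa$ throughout $[0,\theta]$ so that the Taylor behaviour $1-r\cot r=r^2/3+O(r^4)$ can be exploited. But the identity $\cos r(s)=\sqrt{a^2+b^2}\cos(s-s_0)$ for an appropriate phase $s_0$ shows that $r$ is either monotone or unimodal on $\mathbb{R}$, hence its maximum on $[0,\theta]$ is attained at an endpoint: $\max_{s\in[0,\theta]} r(s)=\max\bigl(d(x_*,y),d(x_*',y)\bigr)\leqslant\kappa$. Since $r\mapsto (1-r\cot r)/r^2$ extends continuously to $[0,\kappa_1]$ with value $1/3$ at the origin, it is bounded by some constant $C(\kappa_1)$, which yields $1-r(s)\cot r(s)\leqslant C(\kappa_1)\,r(s)^2\leqslant C(\kappa_1)\,\kappa^2$ on $[0,\theta]$. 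Substituting,
\[
\alpha + \tfrac14\theta^2 \;\leqslant\; \tfrac12\cdot 2C(\kappa_1)\kappa^2\cdot\tfrac14\theta^2 \;=\; \tfrac14\,C(\kappa_1)\,\kappa^2\,\theta^2,
\]
which is the claim with $C_1=C(\kappa_1)/4$. The main technical point is the explicit computation of $\phi''$ on the sphere together with the monotonicity/unimodality of the spherical distance along a geodesic; the rest is routine calculus.
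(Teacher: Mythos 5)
Your strategy is sound and genuinely different from the paper's. The paper works with the exact spherical Apollonius identity $\tfrac12(\cos b+\cos b')=\cos a\,\cos m$ and exploits the convexity of $g(t)=\arccos(t)^2$ (convexity in one direction, the tangent-line inequality in the other) to compare $m^2-\tfrac12(b^2+b'^2)$ with $-a^2$. You instead represent the midpoint defect $\phi(\theta/2)-\tfrac12(\phi(0)+\phi(\theta))$ through the Green's function $\tfrac12\min(s,\theta-s)$ and control $2-\phi''$ pointwise via the explicit Hessian computation $\phi''=2-2(1-(r')^2)(1-r\cot r)$. Your route is arguably more transparent about \emph{why} the constant $\tfrac14$ appears (it is $\int_0^\theta\min(s,\theta-s)\,ds/\theta^2$) and generalizes naturally to manifolds via comparison estimates on $\phi''$; the paper's route avoids any differentiation along the geodesic and extends instead by the Rauch/Toponogov triangle comparisons. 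The identity for $\phi''$, the eikonal bound $(r')^2\leqslant1$, and the sign facts are all correct.

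There is, however, one genuinely false step: the claim that $\max_{s\in[0,\theta]}r(s)=\max\bigl(d(x_*,y),d(x_*',y)\bigr)$. The relation $\cos r(s)=\sqrt{a^2+b^2}\,\cos(s-s_0)$ gives at most one interior critical point of $r$ on an interval of length $\theta<\pi$, but that critical point can be a \emph{maximum} of $r$ (namely $s=s_0+\pi$ modulo $2\pi$, where the great circle is farthest from $y$), in which case the maximum is interior, not at an endpoint. This actually occurs under your hypotheses once $\kappa_1>\pi/2$: take $y$ at distance slightly more than $\pi/2$ from a great circle's nearest point, and $x_*,x_*'$ symmetric about the farthest point of that circle; then $d(x_*,y),d(x_*',y)\leqslant\kappa$ and $d(x_*,x_*')\leqslant\kappa$, yet $r$ peaks strictly above $\max(d(x_*,y),d(x_*',y))$ in the interior. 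The gap is harmless and easily repaired: the triangle inequality gives $r(s)\leqslant\min(s,\theta-s)+\min\bigl(d(x_*,y),d(x_*',y)\bigr)\leqslant\tfrac32\kappa\leqslant\tfrac32\kappa_1<\pi$ (this is exactly the bound the paper uses for the median $m$), so $\phi$ is smooth on $[0,\theta]$ and $1-r\cot r\leqslant C(\kappa_1)\,r^2\leqslant\tfrac94C(\kappa_1)\,\kappa^2$ with $C(\kappa_1)=\sup_{[0,3\kappa_1/2]}(1-r\cot r)/r^2<\infty$. With that substitution your proof closes with $C_1=\tfrac9{16}C(\kappa_1)$.
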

\begin{proof}[Proof of Lemma~\ref{lem:E-particles-local}]
Let~$x_m$ be the midpoint of a minimal geodesic joining~$x_*$ and~$x_*'$. To simplify notations, we will write~$a=d(x_m,x_*)=d(x_m,x_*')=\frac12d(x_*,x_*')$,~$b=d(x_*,y)$, and~$b'=d(x_*',y)$ (see figure~\ref{fig-apollonius-sphere}). We also denote by~$m=d(x_m,y)$ the length of the median. We recall the formula~\eqref{acos} for the calculation of distances.
\begin{figure}[h]
\centering
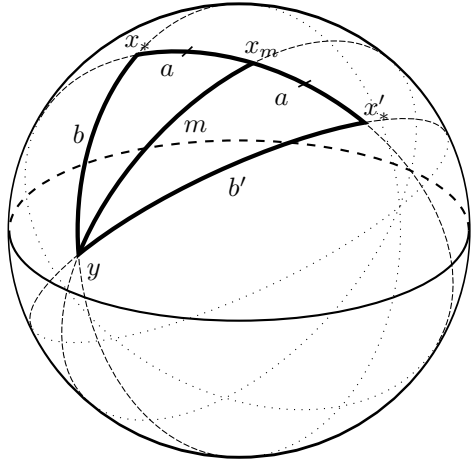
\caption{A spherical triangle and its median}
\label{fig-apollonius-sphere} 
\end{figure}

We first give the equivalent of Apollonius theorem in spherical geometry:
\begin{equation}
\frac{\cos b + \cos b'}2=\cos a \cos m.\label{apollonius-spherical}
\end{equation}
To prove this relation, we have~$\|x_*+x_*'\|^2=2+2\,x_*\cdot x_*'=2\,(1+\cos 2a)=4 \cos^2 a$ and therefore, since~$a\leqslant\frac{\pi}2$, we get~$\|x_*+x_*'\|=2\cos a$. When~$x_*+x_*'\neq0$, we get
\[\cos m=y\cdot x_m=y\cdot\frac{x_*+x_*'}{\|x_*+x_*'\|}=\frac{\cos b+\cos b'}{2\cos a}.\]
When~$x_*=-x_*'$, we have~$\cos b=x_*\cdot y=-x_*'\cdot y=-\cos b'$, and we have~$2a=d(x_*,x_*')=\pi$, so~$\cos a=0$ and the formula~\eqref{apollonius-spherical} is still valid.

We now introduce the function~$g:t\in[-1,1]\mapsto\arccos(t)^2$, such that~$g(\cos \theta)=\theta^2$.
We obtain~$g'(\cos \theta)=-\frac{2\theta}{\sin\theta}$, which gives that~$g'$ is negative and that~$\theta\mapsto g'(\cos \theta)$ is decreasing on~$(0,\pi)$.
Therefore~$g'$ is increasing on~$(-1,1)$, so the function~$g$ is decreasing and convex on~$[-1,1]$.
Thus we have
\[g(\cos a \cos m)=g\left(\frac{\cos b+\cos b'}2\right)\leqslant\frac{g(\cos b)+g(\cos b')}2=\frac{b^2+b'^2}2\]
But we also have that~$g$ is above its tangent, so we get
\begin{align*}
g(\cos a \cos m)&=g(\cos m - (1-\cos a)\cos m)\\
&\geqslant g(\cos m) - g'(\cos m)(1-\cos a)\cos m\\
&\geqslant m^2+\frac{2\,m\cos m}{\sin m} (1-\cos a).
\end{align*}
Combining these two inequalities, we obtain
\begin{equation}
m^2-\frac{b^2+b'^2}2\leqslant - (1-\cos a)\,\frac{2\,m \cos m}{\sin m}\label{apollonius-estimate-cos}.
\end{equation}

We fix~$0<\kappa\leqslant\kappa_1<\frac{2\pi}3$ and we suppose that~$b,b',2a\leqslant\kappa$.
By triangle inequality in the geodesic triangle~$(x_m,x_*,y)$, we get that~$m\leqslant\frac32\kappa$.
We have~$\frac{m\cos m}{\sin m}=1+O(m^2)$ (this function is even and~$C^2$ on~$[-\frac32\kappa_1,\frac32\kappa_1]$ since~$\frac32\kappa_1<\pi$) and~$2(1-\cos a)=a^2(1+O(a^2))$.
Therefore, there exists~$C>0$ such that
\begin{equation}\label{appo}
 m^2-\frac{b^2+b'^2}2\leqslant a^2(-1+C\,\kappa^2),
\end{equation}
To conclude, we notice that since~$\kappa<\pi$, the geodesic from~$x_*$ to~$x_*'$ is unique and~$K(\cdot,x_*,x_*')=\delta_{x_m}$, and then~$\alpha(x_*,x_*',y)=m^2-\frac12(b^2+b'^2)$.
\end{proof}

\begin{rem}
When~$\mathbb{S}=\mathcal S^1\subset \mathbb{R}^2$ is the unit circle, then, if the three lengths~$d(x_*,x_*')$,~$d(x_*,y)$ and~$d(x_*',y)$ are less than~$\frac{2\pi}3$, the geodesic triangle~$(x_*,x_*',y)$ cannot be the whole circle. The computation is then the same as it would be in the real line, and we directly get the estimate of Lemma~\ref{lem:E-particles-local} with~$C_1=0$.
\end{rem}

\subsection{Energy contraction property}

In this subsection, we will prove the main argument of the proof of Theorem~\ref{theo-stability-dirac-sphere}, which is an energy contraction property showing that the energy~$E(\rho_t)$ converges exponentially fast to~$0$, provided it is initially sufficiently small. A similar estimate will also be the key argument in more general situations, see Sections~\ref{section:manifold} and~\ref{section:non-midpoint}.
\begin{prop}\label{prop-decay-energy}
Let~$K$ satisfy Assumption~\ref{Ass:midpoint_sphere}. There exists~$C_0>0$,~$M_0>0$ and~$\gamma>0$ such that for any solution~$\rho\in C(\mathbb{R}_+,\mathcal P(\mathbb{S}))$ of~\eqref{eq-kinetic-model}, we have, for any~$t\in\mathbb{R}_+$ such that~$E(\rho_t)\leqslant M_0$:
\begin{equation}
\label{eq-energy-contraction}
 \frac12\frac d{dt}E(\rho)\leqslant -\frac14E(\rho)+ C_0\, E(\rho)^{1+\gamma}.
\end{equation}
Consequently, there exist~$E_{\max}>0$ such that any solution~$\rho\in C(\mathbb{R}_+,\mathcal P(\mathbb{S}))$ of~\eqref{eq-kinetic-model} with initial condition~$\rho_0$ such that~$E(\rho_0)< E_{\max}$ satisfies
\begin{equation}
\label{eq-decay-energy}
E(\rho_t)\leqslant \Big[1-\Big(\frac{E(\rho_0)}{E_{\max}}\Big)^{\gamma}\Big]^{-1/\gamma} E(\rho_0) \,e^{-\frac12t}.
\end{equation}
\end{prop}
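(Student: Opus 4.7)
The plan has two parts: first prove the differential inequality~\eqref{eq-energy-contraction}, then derive~\eqref{eq-decay-energy} by an explicit Bernoulli ODE computation. The starting point is the identity~\eqref{dEdt}; since $\rho_t$ is a probability measure, $\int_{\mathbb{S}^3}d(x_*,x_*')^2\,d\rho(x_*)\,d\rho(x_*')\,d\rho(y)=E(\rho)$, so I would rewrite
\[
\frac12\frac{d}{dt}E(\rho)=-\frac14 E(\rho)+\int_{\mathbb{S}^3}\Bigl(\alpha(x_*,x_*',y)+\tfrac14 d(x_*,x_*')^2\Bigr)\,d\rho(x_*)\,d\rho(x_*')\,d\rho(y),
\]
and show that the residual integral is at most $C_0\,E(\rho)^{1+\gamma}$ for some $\gamma>0$.

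To bound the residual, let $\bar x\in\mathbb{S}$ be the point from Lemma~\ref{lem-cheb-sphere} (so $W_2(\rho,\delta_{\bar x})^2\leqslant E(\rho)$), choose a scale $\kappa>0$ to be fixed later, and set $G=\{x\in\mathbb{S}:d(x,\bar x)\leqslant\kappa\}$. The plan is to split the domain as $\mathbb{S}^3=G^3\cup(G^3)^c$. On $G^3$ the triangle inequality forces all three pairwise distances to be at most $2\kappa$, so Lemma~\ref{lem:E-particles-local} applied with parameter $2\kappa$ (admissible as long as $2\kappa\leqslant\kappa_1<\tfrac{2\pi}3$) yields $\alpha+\tfrac14 d(x_*,x_*')^2\leqslant 4C_1\,\kappa^2 d(x_*,x_*')^2$, integrating to at most $4C_1\,\kappa^2 E(\rho)$. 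On $(G^3)^c$ I would use the global bound of Lemma~\ref{lem:E-particles-global}, namely $\alpha+\tfrac14 d(x_*,x_*')^2\leqslant 2d(x_*,x_*')\min(d(x_*,y),d(x_*',y))$, and decompose $(G^3)^c$ into the three events $\{x_*\in G^c\}$, $\{x_*'\in G^c\}$, $\{y\in G^c\}$. In each case I choose the side of the $\min$ whose first argument is the \emph{non}-bad variable, split $d(x_*,x_*')\leqslant d(x_*,\bar x)+d(\bar x,x_*')$, and then apply Cauchy--Schwarz together with the two Markov-type estimates $\rho(G^c)\leqslant E(\rho)/\kappa^2$ and $\int_{G^c}d(x,\bar x)\,d\rho(x)\leqslant E(\rho)/\kappa$ provided by Lemma~\ref{lem-cheb-sphere}. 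Each of the three pieces ends up dominated by $C\bigl(E(\rho)^{3/2}/\kappa+E(\rho)^2/\kappa^2\bigr)$.

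Balancing $\kappa^2 E(\rho)$ against $E(\rho)^{3/2}/\kappa$ selects the choice $\kappa=E(\rho)^{1/6}$, which is admissible ($2\kappa\leqslant\kappa_1$) as soon as $E(\rho)\leqslant M_0$ for an explicit $M_0>0$, and produces~\eqref{eq-energy-contraction} with $\gamma=\tfrac13$. The main obstacle I anticipate is this bad-region bookkeeping: the crude bound $d(x_*,x_*')\leqslant\pi$ only produces a term of order $E(\rho)/\kappa$, which cannot be absorbed by $-\tfrac14 E(\rho)$; exploiting the $\min$ in Lemma~\ref{lem:E-particles-global} and re-centering $d(x_*,x_*')$ around $\bar x$ are both essential to gain the extra factor $E(\rho)^{1/2}$.

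For the second assertion, set $u(t)=E(\rho_t)$ and view~\eqref{eq-energy-contraction} as the Bernoulli differential inequality $u'\leqslant-\tfrac12 u+2C_0\,u^{1+\gamma}$. Substituting $v=u^{-\gamma}$ reverses the inequality (because $u>0$ and $-\gamma u^{-\gamma-1}<0$) into the linear bound $v'\geqslant\tfrac{\gamma}{2}v-2\gamma C_0$, which integrates to $v(t)\geqslant(v(0)-4C_0)e^{\gamma t/2}+4C_0$. Setting $E_{\max}=(4C_0)^{-1/\gamma}$, for $u(0)<E_{\max}$ one has $v(0)>4C_0$, and reading the inequality back in terms of $u$ gives exactly~\eqref{eq-decay-energy}. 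A continuity bootstrap (reducing $E_{\max}$ further if necessary so that the right-hand side of~\eqref{eq-decay-energy} stays below $M_0$ at $t=0$) ensures that $E(\rho_t)$ never leaves the region where~\eqref{eq-energy-contraction} applies, completing the proof.
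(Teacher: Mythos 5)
Your proposal matches the paper's argument in all essential respects: the same decomposition around the quasi-center $\bar x$ (you use a ball of radius $\kappa$ where the paper uses radius $\kappa/2$, a cosmetic difference that only changes constants), the same split between the local estimate of Lemma~\ref{lem:E-particles-local} on the good region and the global estimate of Lemma~\ref{lem:E-particles-global} elsewhere, the same Markov-type bounds from Lemma~\ref{lem-cheb-sphere} on the bad region, and the same balancing choice $\kappa=E(\rho)^{1/6}$ giving $\gamma=\tfrac13$. Your Bernoulli substitution $v=u^{-\gamma}$ is a minor variant of the paper's rescaling $z(t)=E(\rho_t)/E_{\max}$; both produce the same explicit Gronwall bound.

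The one step that is misformulated is the bootstrap closing the argument. You propose to ``reduce $E_{\max}$ further if necessary so that the right-hand side of~\eqref{eq-decay-energy} stays below $M_0$ at $t=0$,'' but the right-hand side $\bigl[1-(E_0/E_{\max})^{\gamma}\bigr]^{-1/\gamma}E_0$ diverges as $E_0\to E_{\max}^-$ for \emph{every} choice of $E_{\max}$, so no such reduction exists if the result is to hold for all $E_0<E_{\max}$. The correct and simpler closure is the one the paper uses: take $E_{\max}=\min\bigl((4C_0)^{-1/\gamma},M_0\bigr)\leqslant M_0$, note that under~\eqref{eq-energy-contraction} one has $\tfrac{d}{dt}E(\rho_t)\leqslant -\tfrac12 E(\rho_t)\bigl(1-(E(\rho_t)/E_{\max})^{\gamma}\bigr)<0$ whenever $0<E(\rho_t)<E_{\max}$, so that $E(\rho_t)$ is decreasing and hence remains strictly below $E_{\max}\leqslant M_0$ for all $t\geqslant 0$ once $E(\rho_0)<E_{\max}$; this keeps~\eqref{eq-energy-contraction} applicable throughout and validates the Gronwall estimate. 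The flaw is easily repaired, but the specific repair you suggest would not work.
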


\begin{proof}[Proof of Proposition~\ref{prop-decay-energy}]
Let~$\rho\in C(\mathbb{R}_+,\mathcal P(\mathbb{S}))$ be a solution of~\eqref{eq-kinetic-model}.
We have, thanks to~\eqref{dEdt}, and the fact that~$\rho$ is a probability measure
\begin{equation}
 \frac12\frac d{dt}E(\rho)+\frac14E(\rho) =\int_{\mathbb{S}\times\mathbb{S}\times\mathbb{S}} [\alpha(x_*,x_*',y)+\tfrac14d(x_*,x_*')^2]\,d\rho(x_*)\,d\rho(x_*')\,d\rho(y).\label{eqE}
\end{equation}
We want to estimate the right-hand side of~\eqref{eqE} for a given time~$t$.
We take~$\kappa\leqslant\kappa_1$ and~$\bar x$ given by Lemma~\ref{lem-cheb-sphere}. We then define~$\bar{\omega}:=\{x\in \mathbb{S};\,d(x,\bar x)\leqslant \kappa/2\}$, and we split the triple integral depending on whether~$x_*$,~$x_*'$ and~$y$ are in~$\bar{\omega}$ or not.
When both of them are in~$\bar{\omega}$, we estimate the triple integral thanks to Lemma~\ref{lem:E-particles-local}, since~$\max(d(x_*,x_*'),d(x_*,y),d(x_*',y))\leqslant\kappa$.
When one of them is not in~$\bar{\omega}$, we estimate the triple integral thanks to Lemma~\ref{lem:E-particles-global}.
We obtain
\begin{align*}
\frac12\frac d{dt}E(\rho)+\frac14E(\rho)&\leqslant C\,\kappa^2\,\iiint\limits_{x_*,\,x_*',\,y\in\bar{\omega}}d(x_*,x_*')^2\,d\rho(x_*)\,d\rho(x_*')\,d\rho(y)\\
&\phantom{\leqslant}+2\iiint\limits_{y\in\bar{\omega}^c,\,x_*,\,x_*'\in\mathbb{S}} \min(d(x_*,y),d(x_*',y))\,d(x_*,x_*')\,d\rho(x_*)\,d\rho(x_*') \,d\rho(y)\\
&\phantom{\leqslant}+2\iiint\limits_{x_*\in\bar{\omega}^c,\,x_*'\in\mathbb{S},\,y\in\bar{\omega}} \min(d(x_*,y),d(x_*',y))\,d(x_*,x_*')\,d\rho(x_*')\,d\rho(y)\,d\rho(x_*)\\
&\phantom{\leqslant}+2\iiint\limits_{x_*'\in\bar{\omega}^c,\,x_*,\,y\in\bar{\omega}} \min(d(x_*,y),d(x_*',y))\,d(x_*,x_*')\,d\rho(x_*)\,d\rho(y)\,d\rho(x_*').\\
\end{align*}
The first triple integral is directly estimated by~$E(\rho)$ since~$\rho$ is a probability measure.
To control the three other integrals, we exchange the role of~$x_*$,~$x_*'$ and~$y$, and extend the domain of integration for the double integral, and we see that they can all be estimated similarly by~$J(\rho)$, described below. We get
\[\frac12\frac d{dt}E(\rho)+\frac14E(\rho)\leqslant C\, \kappa^2 \,E(\rho) + 6 J(\rho),\]
where
\begin{align*}
J(\rho)&=\iiint\limits_{x\in\bar{\omega}^c,y,z\in\mathbb{S}}d(x,y)\,d(y,z)\,d\rho(y)\,d\rho(z)\,d\rho(x)\\
&\leqslant\iiint\limits_{x\in\bar{\omega}^c,y,z\in\mathbb{S}}[d(x,\bar x)+d(\bar x,y)]\,d(y,z)\,d\rho(y)\,d\rho(z)\,d\rho(x)\\
&\leqslant\int\limits_{x\in\bar{\omega}^c}d(x,\bar x)\,d\rho(x) \iint\limits_{y,z\in\mathbb{S}}d(y,z)\,d\rho(y)\,d\rho(z)+\int\limits_{x\in\bar{\omega}^c}d\rho(x) \iint\limits_{y,z\in\mathbb{S}}d(y,\bar x)\,d(y,z)\,d\rho(y)\,d\rho(z)\\
&\leqslant\frac{2E(\rho)}{\kappa} \iint_{\mathbb{S}\times\mathbb{S}}d(y,z)\,d\rho(y)\,d\rho(z)+\frac{4E(\rho)}{\kappa^2} \iint_{\mathbb{S}\times\mathbb{S}}d(y,\bar x)\,d(y,z)\,d\rho(y)\,d\rho(z)
\end{align*}
thanks to Lemma~\ref{lem-cheb-sphere} (with~$\kappa/2$).
The two double integrals can then be estimated thanks to the Cauchy-Schwarz inequality (the first one is less than~$\sqrt{E(\rho)}$ since~$\rho\otimes\rho$ is a probability measure, and the second one is less than~$E(\rho)$ thanks to~Lemma~\ref{lem-cheb-sphere}).
In the end, we get, as soon as~$\kappa\leqslant\kappa_1$:
\begin{equation}
\frac12\frac d{dt}E(\rho)+\frac14E(\rho)\leqslant C\,\kappa^2\,E(\rho)+12\frac{E(\rho)^{\frac32}}{\kappa}+24\frac{E(\rho)^2}{\kappa^2}.\label{estimate-dE-kappa}
\end{equation}
So we can take~$\kappa=E(\rho)^{\frac16}$, and as soon as~$E(\rho)\leqslant M_0$, we get the estimate~\eqref{eq-energy-contraction}, with~$M_0=(\kappa_1)^6$,~$\gamma=\frac13$, and~$C_0=C+12+24M_0^{\frac13}$.

Now, writing~$E_{\max}=\min((4C_0)^{-1/\gamma},M_0)$ and~$z(t)=\frac{E(\rho)}{E_{\max}}$, the estimate~\eqref{eq-energy-contraction} becomes
\[\frac{dz}{dt}\leqslant-\tfrac12 \,z\,(1-z^\gamma),\] which is then satisfied as soon as~$z<1$, since~$E_{\max}\leqslant M_0$, and can be explicitly solved.
We get that, as soon as~$z(0)<1$,~$z$ is decreasing in time and we have
\[z(t)\leqslant\frac{z(0)e^{-\frac12t}}{\big[1-z(0)^{\gamma}(1-e^{-\frac12\gamma t})\big]^{1/\gamma}}\leqslant\frac{z(0)e^{-\frac12t}}{\big(1-z(0)^{\gamma}\big)^{1/\gamma}},\]
which gives the estimate~\eqref{eq-decay-energy}.
\end{proof}

\subsection{Proof of Theorem~\ref{theo-stability-dirac-sphere}}\label{proof-shperes}
\label{subsec-local-stability}
We are now ready to complete the proof of the main result of this section.
\begin{proof}[Proof of Theorem~\ref{theo-stability-dirac-sphere}]
If~$W_2(\rho_0,\delta_{x_0})$ is sufficiently small for some~$\bar x\in\mathbb{S}$, then~$E(\rho_0)=E_0$ is also sufficiently small, since~$E(\rho_0)\leqslant4W_2(\rho_0,\delta_{x_0})^2$ thanks to~\eqref{E-4W2}. We can apply Proposition~\ref{prop-decay-energy}. In what follows, we will denote by~$C$ a generic constant, which may vary from line to line, but which does not depend on~$E_0$ (provided~$W_2(\rho_0,\delta_{x_0})$ is sufficiently small for some~$\bar x\in\mathbb{S}$). Our goal is to show that~$\bar x(t)$ (provided by Lemma~\ref{lem-cheb-sphere}) satisfies a Cauchy property.
Let~$t'\geqslant t$,
\begin{align}
d(\bar x(t),\bar x(t'))^2&=\int d(\bar x(t),\bar x(t'))^2\,d\rho_{t'}(x)\leqslant \int 2\big[d(\bar x(t),x)^2+d(x,\bar x(t'))^2\big]\,d\rho_{t'}(x) \nonumber\\
&\leqslant2\int d(\bar x(t),x)^2d\rho_{t'}(x)+2\int d(x,\bar x(t'))^2\,d\rho_{t'}(x)\nonumber\\
&\leqslant2\int d(x,\bar x(t))^2\,d\rho_{t}(x)+2\int_t^{t'}\frac d{ds}\int d(x,\bar x(t))^2\,d\rho_{s}(x)\,ds+C\,E_0\,e^{-\frac12t'}\nonumber\\
&\leqslant C\,E_0\,[e^{-\frac12t}+e^{-\frac12t'}]+2\int_t^{t'}\frac d{ds}\int d(x,\bar x(t))^2\,d\rho_{s}(x)\,ds.\label{estdxx}
\end{align}
To estimate the second term of~\eqref{estdxx}, we use once again a computation similar to the one introduced in the beginning of this section (see~\eqref{dE1},~\eqref{dEdt},~\eqref{def-alpha}):
\begin{align*}
\frac d{ds}\int &d(x,\bar x(t))^2\,d\rho_{s}(x)=\int d(x,\bar x(t))^2\partial_s\rho(s,x)\,dx\\
&=\int d(x,\bar x(t))^2[A(\rho_s,\rho_s)(x)-\rho(s,x)]\,dx\\
&=\iint \alpha(x_*,x_*',\bar x(t))\,d\rho_{s}(x_*)d\rho_s(x_*'),
\end{align*}
and then, thanks to Lemma~\ref{lem:E-particles-global},
\begin{align}
 \frac d{ds}\int d(x,\bar x(t))^2\,d\rho_{s}(x)&\leqslant2\iint d(x_*,\bar x(t))\, d(x_*,x_*')\,d\rho_s(x_*)\,d\rho_s(x_*')\nonumber\\
 &\leqslant2\iint d(x_*,\bar x(s))d(x_*,x_*')\,d\rho_s(x_*)\,d\rho_s(x_*')\nonumber\\
&\phantom{\leqslant\iint}+ 2\, d(\bar x(t),\bar x(s))\iint d(x_*,x_*')\,d\rho_s(x_*)\,d\rho_s(x_*')\nonumber\\
 &\leqslant2[E(\rho_s) + d(\bar x(t),\bar x(s))\sqrt{E(\rho_s)}]\label{est0}
\end{align}
where we have used the Cauchy-Schwarz inequality. If we define
\[M_{t,t'}=\sup_{s\in[t,t']}d(\bar x(t),\bar x(s)),\]
and plug this last estimate into~\eqref{est0}, we obtain, thanks to Proposition~\ref{prop-decay-energy}:
\begin{equation}
\frac d{ds}\int d(x,\bar x(t))^2\,d\rho_{s}(x)\leqslant C \,[E_0\,e^{-\frac12s} + \sqrt{E_0}\,M_{t,t'}\,e^{-\frac14s}].\label{estdds}
\end{equation}
If we combine the estimate~\eqref{estdds} with the estimate~\eqref{estdxx}, we get:
\begin{equation}\label{estimexpo}
d(\bar x(t),\bar x(t'))^2\leqslant C\,  \big[E_0\,e^{-\frac12t}+ \sqrt{E_0}\,M_{t,t'}\,e^{-\frac14t}\big].
\end{equation}
We then obtain
\[M_{t,t'}^2\leqslant C\,[E_0\,e^{-\frac12t} + M_{t,t'}\, \sqrt{E_0}\, e^{-\frac14t}],\]
which can be solved explicitly, and we get~$M_{t,t'}\leqslant\frac12\big[C+\sqrt{C^2+4C}\big]\sqrt{E_0}\, e^{-\frac14t}$.
Plugging back this estimate in~\eqref{estimexpo}, we now obtain
\begin{equation}
\label{estimexpo2}
d(\bar x(t),\bar x(t'))^2\leqslant C\,E_0\,e^{-\frac12t}.
\end{equation}
 
Let~$t_n:=n$. This last estimate shows that~$(\bar x(t_n))_n$ is a Cauchy sequence, and thus converges to a limit~$\bar x_\infty$.
A limit~$t'=t_n\to\infty$ of the inequality~\eqref{estimexpo2} then shows that for~$t\geqslant 0$,
\begin{equation}\label{cvxt}
d(\bar x(t),\bar x_\infty)^2\leqslant C\,E_0\,e^{-\frac12t}.
\end{equation}

We can now show the convergence of~$\rho_t$ to~$\delta_{\bar x_\infty}$, thanks to Lemma~\ref{lem-cheb-sphere} and~\eqref{cvxt}:
\begin{align*}
W_2(\rho_t,\delta_{\bar x_\infty})&\leqslant W_2(\rho_t,\delta_{\bar x(t)})+ d(\bar x_\infty,\bar x(t))\\
&\leqslant\sqrt{E(\rho_t)}+C\sqrt{E_0}\,e^{-\frac14t}\leqslant C\sqrt{E_0}\,e^{-\frac14t} \leqslant C\, W_2(\rho_0,\delta_{x_0})\,e^{-\frac14t},
\end{align*}
thanks to Proposition~\ref{prop-decay-energy} and to the estimate~\eqref{E-4W2}, which ends the proof of the Theorem.
\end{proof}

\section{The midpoint model on a Riemannian manifold}\label{section:manifold}

In this section, we consider a set of position~$\mathcal M$ that is a Riemannian manifold. More precisely, we will need the following assumptions on~$\mathcal M$ (which will be needed for comparison arguments, we refer to~\cite{berger2003panoramic} for the definition of injectivity radius and sectional curvature):

\begin{ass}[Position set~$\mathcal M$]\label{Ass-M}
$\mathcal M$ is a complete connected Riemannian manifold such that
\begin{itemize}
 \item~$\mathcal M$ has a positive injectivity radius~$r_{\mathcal M}>0$,
\item~$\mathcal M$ has a sectional curvature bounded from above by a constant~$\mathcal K_{\max}$.
\end{itemize}
\end{ass}

For all~$t\geqslant 0$, a solution of~\eqref{eq-kinetic-model}~$\rho(t,\cdot)$ is then a probability measure on~$\mathcal M$. We still denote by~$K(\cdot,x_*,x_*')$ the probability law of two particles' position after they collide, if their initial positions were~$x_*,\,x_*'\in\mathcal M$. In this section, we only consider midpoint models, i.e. models that satisfy
\begin{ass}[midpoint models]\label{Ass-K1}
 For any~$x_*$ and~$x_*'$ in~$\mathcal M$, the support of the probability measure~$K(\cdot,x_*,x_*')$ is included in the set~$M_{x_*,x_*'}$ of midpoints of minimal geodesics joining~$x_*$ and~$x_*'$.
\end{ass}

% Again, the kinetic model is given by
% \begin{equation}
% \partial_t \rho=A(\rho,\rho)-\rho,\label{eq-kinetic-model-M}
% \end{equation}
% where
% \[A(\rho,\widetilde{\rho})(x)=\int_{\mathcal M\times\mathcal M} K(x,x_*,x_*')\,d\rho(x_*)\,d\widetilde{\rho}(x_*').\]
% We define the midpoint models on~$\mathcal M$ as follows
% \begin{defi}\label{def-midpoint-M}
% We say that~$K$ is a midpoint kernel if for any~$x_*$ and~$x_*'$ in~$\mathcal M$, the support of the probability measure~$K(\cdot,x_*,x_*')$ is included in the set~$M_{x_*,x_*'}$ of midpoints of minimal geodesics joining~$x_*$ and~$x_*'$.
% \end{defi}

The local stability of Dirac masses obtained in the case of spheres (see Theorem~\ref{theo-stability-dirac-sphere}) can be extended to this more general framework:
%In this framework, our main theorem in the case of the sphere can be extended under some assumptions on the geometry of~$\mathcal M$.
\begin{theo}
\label{theo-stability-dirac-midpoint-M}
Let the manifold~$\mathcal M$ satisfy Assumption~\ref{Ass-M}, and the collision kernel~$K$ satisfy Assumption~\ref{Ass-K1}. 
% We suppose that~$\mathcal M$ has a positive injectivity radius~$r_{\mathcal M}$ and that its sectional curvature is bounded from above by a constant~$\mathcal K_{\max}$.
%If~$K$ is a midpoint kernel, then 
There exists~$C_1>0$ and~$\eta>0$ such that if~$\rho$ in~$C(\mathbb{R}_+,\mathcal P_2(\mathcal M))$ is a solution of~\eqref{eq-kinetic-model} with initial condition~$\rho_0$ that satisfies~$W_2(\rho_0,\delta_{x_0})< \eta$ for some~$x_0\in\mathcal M$, then there exists~$x_\infty\in \mathcal M$ such that
\[W_2(\rho_t,\delta_{x_\infty})\leqslant C_1  W_2(\rho_0,\delta_{x_0}) \, e^{-t/4}.\]
\end{theo}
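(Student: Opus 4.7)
The plan is to mirror, step by step, the proof of Theorem~\ref{theo-stability-dirac-sphere}, substituting the explicit spherical trigonometry used in Lemma~\ref{lem:E-particles-local} by a Rauch / CAT($\mathcal K_{\max}$) comparison argument; the two geometric assumptions on~$\mathcal M$ in Assumption~\ref{Ass-M} are tailored exactly for this replacement.

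First I would reintroduce the energy $E(\rho)=\int_{\mathcal M\times\mathcal M}d(x,y)^2\,d\rho(x)\,d\rho(y)$ and verify that the analogue of Lemma~\ref{lem-cheb-sphere} still holds on~$\mathcal M$: the upper bound $E(\rho)\leqslant 4\,W_2(\rho,\delta_x)^2$ and the Markov-type inequalities only require the triangle inequality and the identity~\eqref{eq-W2-dirac}, while existence of a minimizer $\bar x\in\mathcal M$ of the continuous $1$-Lipschitz function $y\mapsto W_2(\rho,\delta_y)$ follows from the completeness of~$\mathcal M$ and the fact that this function is proper. The differentiation identity~\eqref{dEdt} with the same collision functional~$\alpha$ is unchanged, and the global estimate of Lemma~\ref{lem:E-particles-global} transfers verbatim, since its proof relies solely on the triangle inequality in~$\mathcal M$ and on the midpoint identity $d(x,x_*)=\tfrac12 d(x_*,x_*')$ for $x\in\mathrm{supp}\,K(\cdot,x_*,x_*')$, which is exactly Assumption~\ref{Ass-K1}.

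The core step is the manifold analogue of Lemma~\ref{lem:E-particles-local}. I would pick $\kappa_1$ small enough so that every metric ball of radius $3\kappa_1$ is a totally normal geodesically convex neighborhood (possible thanks to $r_{\mathcal M}>0$) and so that the conclusion of the CAT($\mathcal K_{\max}$) comparison theorem holds for triangles whose sides all have length at most $\kappa_1$ (when $\mathcal K_{\max}>0$ this requires in addition $\kappa_1$ to be smaller than a fixed fraction of $\pi/\sqrt{\mathcal K_{\max}}$). For $\max(d(x_*,x_*'),d(x_*,y),d(x_*',y))\leqslant\kappa\leqslant\kappa_1$, the midpoint $x_m$ of $x_*$ and $x_*'$ is then unique, so $K(\cdot,x_*,x_*')=\delta_{x_m}$ and $\alpha(x_*,x_*',y)=m^2-\tfrac12(b^2+b'^2)$ with the notations of the sphere proof. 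Rauch's comparison theorem then yields $m\leqslant\widetilde m$, where $\widetilde m$ denotes the median length in the comparison triangle with sides $(2a,b,b')$ drawn in the simply connected model space $M^2_{\mathcal K_{\max}}$ of constant curvature $\mathcal K_{\max}$. The computation~\eqref{apollonius-spherical}--\eqref{appo} of the sphere proof transposes directly to $M^2_{\mathcal K_{\max}}$ (with the hyperbolic/Euclidean version being simpler and producing no correction term when $\mathcal K_{\max}\leqslant 0$), and produces a constant $C_1\geqslant 0$ depending only on $\mathcal K_{\max}$ such that
\[
\alpha(x_*,x_*',y)\leqslant -\tfrac14\,d(x_*,x_*')^2+C_1\,\kappa^2\,d(x_*,x_*')^2.
\]

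With these two collision estimates in hand, the proof of the energy contraction estimate~\eqref{eq-energy-contraction} and of its consequence~\eqref{eq-decay-energy} is obtained by copying the proof of Proposition~\ref{prop-decay-energy} word for word, with $\bar x$ now taken in~$\mathcal M$; and the Cauchy argument of Subsection~\ref{proof-shperes} for the family $(\bar x(t))_t$ transfers unchanged, since it uses only the global estimate, the exponential decay of $E(\rho_t)$, the triangle inequality and the completeness of~$\mathcal M$ to produce a limit $x_\infty\in\mathcal M$ and the desired bound $W_2(\rho_t,\delta_{x_\infty})\leqslant C_1\,W_2(\rho_0,\delta_{x_0})\,e^{-t/4}$. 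The main obstacle is precisely the comparison-geometry step: one must check that the threshold $\kappa_1$ can be fixed only in terms of $r_{\mathcal M}$ and $\mathcal K_{\max}$, that midpoints remain unique on that scale, and that Rauch's inequality combines cleanly with the model-space Apollonius computation to give a bound quadratic in the local diameter $\kappa$. The remaining pieces are a direct transcription of the sphere proof of Section~\ref{section:midpoint-sphere}.
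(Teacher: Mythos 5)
Your overall strategy is the same as the paper's: reduce the manifold statement to the sphere proof by replacing the explicit spherical trigonometry of Lemma~\ref{lem:E-particles-local} with a comparison‑geometry argument that controls the median length in a small geodesic triangle. The reductions you list (reuse of the energy $E$, of Lemma~\ref{lem-cheb-sphere}, of the global estimate Lemma~\ref{lem:E-particles-global}, of Proposition~\ref{prop-decay-energy}, and of the Cauchy argument) are exactly the ones the paper makes. You also correctly observe that existence of a minimiser $\bar x$ in the non-compact case follows from completeness and coercivity of $y\mapsto W_2(\rho,\delta_y)$, which the paper glosses over.

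There is, however, one genuine discrepancy in the key comparison step. You claim that ``Rauch's comparison theorem then yields $m\leqslant\widetilde m$'' where $\widetilde m$ is the median in the model-space triangle with the \emph{same three side lengths} $(2a,b,b')$. That is the CAT($\mathcal K_{\max}$) midpoint comparison, not what Theorem~\ref{lem-rauch} gives: the Rauch statement as quoted in the paper is an SAS-type comparison (two sides and the included angle fixed, the opposite side compared). The paper applies that Rauch statement \emph{twice}, to the sub-triangles $(x,z,y)$ and $(x',z,y)$ with common apex $z=x_m$ and supplementary angles $\theta,\pi-\theta$, obtaining $b\geqslant\bar b$ and $b'\geqslant\bar b'$ and then feeding these into the model-space Apollonius inequality; it never fixes all three sides. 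Your route is valid (the model-space median comparison is a standard consequence of the upper curvature bound), but it relies on the local CAT($\mathcal K_{\max}$) property, which needs a separate citation or a short derivation from Rauch; as written, the step is not a direct application of the theorem you invoke. Either supply that derivation or adopt the paper's two-applications-of-Rauch scheme.

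Finally, a small point: you assert that the Cauchy argument of Subsection~\ref{proof-shperes} transfers ``unchanged''. On a non-compact manifold you must first check that $M_{t,t'}=\sup_{s\in[t,t']}d(\bar x(t),\bar x(s))$ is finite before the self-improving inequality $M_{t,t'}^2\leqslant C\,[E_0 e^{-t/2}+M_{t,t'}\sqrt{E_0}\,e^{-t/4}]$ is usable; the paper does this explicitly via $d(\bar x(t),\bar x(s))\leqslant 2\sqrt{E(\rho_0)}+W_2(\rho_s,\rho_t)$ and the continuity of $s\mapsto\rho_s$ in $\mathcal P_2(\mathcal M)$. This is easy, but it is not literally a transcription, so it should be stated.
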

Note that the convergence rate in Theorem~\ref{theo-stability-dirac-midpoint-M} is optimal, see Remark~\ref{rem:convergence_rate}.

The proof of this theorem is very close to the proof of Theorem~\ref{theo-stability-dirac-sphere}, since most of the proof of Theorem~\ref{theo-stability-dirac-sphere} was independent of the geometry of~$\mathbb{S}$. We will thus only emphasize the arguments that require some modifications. The main difficulty comes from the following lemma, which is a generalization of Lemma~\ref{lem:E-particles-local}:
\begin{lem}\label{lem:E-particles-local-M}
Let the manifold~$\mathcal M$ satisfy Assumption~\ref{Ass-M}, and the collision kernel~$K$ satisfy Assumption~\ref{Ass-K1}. 
There exists~$\kappa_1>0$ and a constant~$C_1\geqslant0$ such that for any~$\kappa\leqslant\kappa_1$, for any~$x_*,\,x_*',\,y\in \mathcal M$ such that
$\max\left(d(x_*,y),\,d(x_*',y),\,d(x_*,x_*')\right)\leqslant \kappa$, we have
\[\alpha(x_*,x_*',y) \leqslant -\frac14d(x_*,x_*')^2+ C_1\, \kappa^2\,d(x_*,x_*')^2.\]
\end{lem}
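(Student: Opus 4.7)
The plan is to follow the structure of the proof of Lemma~\ref{lem:E-particles-local}, replacing the direct spherical trigonometry by a triangle comparison argument. First I would choose $\kappa_1>0$ small enough that: (i) $2\kappa_1<r_{\mathcal M}$, so that whenever $d(x_*,x_*')\leqslant \kappa$ the minimizing geodesic between $x_*$ and $x_*'$ is unique and its midpoint $x_m$ is well-defined; and (ii) $3\kappa_1$ is smaller than the diameter of the model space of constant curvature $\max(\mathcal K_{\max},1)$, so that a comparison triangle with the same side lengths exists. By Assumption~\ref{Ass-K1} and uniqueness of the geodesic, we then have $K(\cdot,x_*,x_*')=\delta_{x_m}$, so that, writing $m=d(x_m,y)$, $b=d(x_*,y)$, $b'=d(x_*',y)$ and $a=\tfrac12 d(x_*,x_*')$, the quantity to estimate reduces to
\[
\alpha(x_*,x_*',y)=m^2-\tfrac12(b^2+b'^2).
\]

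The key step is a Toponogov/Rauch-type comparison. Since $\mathcal M$ has sectional curvature bounded above by $\mathcal K_{\max}$, it is locally CAT($\mathcal K_{\max}$) (Alexandrov's theorem), and inside the geodesic ball of radius $\kappa_1<r_{\mathcal M}/2$ this comparison applies. I would build the comparison triangle $(\tilde x_*,\tilde x_*',\tilde y)$ in the model space $M_{\mathcal K_{\max}}$ (a sphere of radius $1/\sqrt{\mathcal K_{\max}}$ when $\mathcal K_{\max}>0$, Euclidean or hyperbolic space otherwise) with the same side lengths $2a,b,b'$, denote by $\tilde x_m$ the midpoint of $[\tilde x_*,\tilde x_*']$ in this triangle, and conclude
\[
m=d(x_m,y)\leqslant d(\tilde x_m,\tilde y)=:\tilde m.
\]
Hence it suffices to prove $\tilde m^2-\tfrac12(b^2+b'^2)\leqslant -a^2+Ca^2\kappa^2$ in the model space.

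This reduction places us in a space of constant curvature, where an Apollonius-type identity is available. When $\mathcal K_{\max}\leqslant0$, the classical (Euclidean) identity $\tilde m^2=\tfrac12(b^2+b'^2)-a^2$ holds in the flat case, and its hyperbolic counterpart (based on $\cosh(a/R)\cosh(\tilde m/R)=\tfrac12(\cosh(b/R)+\cosh(b'/R))$) even gives $\tilde m^2\leqslant\tfrac12(b^2+b'^2)-a^2$, so we may take $C_1=0$. When $\mathcal K_{\max}>0$, setting $R=1/\sqrt{\mathcal K_{\max}}$, the spherical Apollonius formula $\cos(a/R)\cos(\tilde m/R)=\tfrac12(\cos(b/R)+\cos(b'/R))$ holds, and the convexity/tangent argument applied to $g:t\mapsto R^2\arccos(t)^2$ at scale $R$ — which is exactly the argument producing~\eqref{appo} in the unit sphere case — yields $\tilde m^2-\tfrac12(b^2+b'^2)\leqslant a^2(-1+C\kappa^2)$ for some $C=C(\mathcal K_{\max})$, provided $\kappa\leqslant \kappa_1$. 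Combining with $m\leqslant\tilde m$ and recalling $a^2=\tfrac14 d(x_*,x_*')^2$ gives the announced estimate with $C_1=C/4$.

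The main obstacle is the application of the CAT($\mathcal K_{\max}$) comparison: one must verify that the three points really lie in a geodesically convex ball where the comparison theorem is valid and that the comparison triangle in the model space exists (i.e.\ the triangle inequalities on $2a,b,b'$ are satisfied and the perimeter is below the allowed threshold). Both points are handled by choosing $\kappa_1$ small enough in terms of $r_{\mathcal M}$ and $\mathcal K_{\max}$. Once this comparison is set up, the remainder of the proof is a direct transcription of the argument of Lemma~\ref{lem:E-particles-local}, performed in the space of constant curvature rather than on $\mathbb S$ itself.
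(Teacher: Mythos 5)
Your proof is correct and has the same overall skeleton as the paper's: reduce to the constant-curvature model space via a comparison theorem, then run the Apollonius/convexity argument of Lemma~\ref{lem:E-particles-local} in that model space. The two proofs differ in which comparison theorem they invoke and, consequently, which quantities they hold fixed. The paper uses the Rauch theorem (Theorem~\ref{lem-rauch}) in its side--angle--side form: it keeps $a$, $m$ and the angle~$\theta$ at the median foot the same, builds a comparison triangle on the sphere of curvature~$\mathcal K_{\max}$, and concludes $b\geqslant\bar b$, $b'\geqslant\bar b'$; it then plugs $\bar b,\bar b'$ into~\eqref{appo}. You instead fix the three side lengths $2a,b,b'$ and invoke the local CAT($\mathcal K_{\max}$) property (Alexandrov's theorem, itself a consequence of Rauch) to get the opposite monotonicity on the median, $m\leqslant\tilde m$, before plugging $\tilde m$ into the model-space estimate. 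Both routes land in exactly the same place, namely $m^2-\tfrac12(b^2+b'^2)\leqslant -a^2+C\kappa^2 a^2$, and both need $\kappa_1$ small enough that the minimizing geodesic and midpoint are unique, the triangle lies in a convex ball, and the comparison triangle exists in the model space. The paper's route has the minor advantage of only citing a theorem stated verbatim in its references (\cite{cheeger1975comparison}, \cite{berger2003panoramic}), whereas yours quietly relies on the nontrivial fact that an upper sectional-curvature bound plus a positive injectivity radius yields a local CAT($k$) space --- a standard result, but one that in turn rests on Rauch. Your observation that one may take $C_1=0$ when $\mathcal K_{\max}\leqslant 0$ (via the Euclidean Apollonius identity or its hyperbolic strengthening) is correct and a slight sharpening that the paper does not state explicitly.
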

To prove this result, we should establish an estimate similar to~\eqref{appo}, in the context of a position set~$\mathcal M$ that is a Riemannian manifold. We prove such a result in the lemma below (Lemma~\ref{lem-apollonius-estimate-M-positive}). Just as in the case of Lemma~\ref{lem:E-particles-local}, the proof of Lemma~\ref{lem:E-particles-local-M} is trivial once the estimate~\eqref{estimate-apollonius-positive} is established, since~$\alpha(x_*,x_*',y)=m^2-\frac{1}2(b^2+b'^2)$ (where we have used the notations of the proof of Lemma~\ref{lem:E-particles-local}).

\begin{lem}\label{lem-apollonius-estimate-M-positive}
Let the manifold~$\mathcal M$ satisfy Assumption~\ref{Ass-M}. 
%We suppose that~$\mathcal M$ has a positive injectivity radius~$r_{\mathcal M}$, and that its sectional curvature is bounded from above by a constant~$\mathcal K_{\max}\geqslant0$. 
There exists~$\kappa_1>0$ and a constant~$C_1>0$ such that for any~$\kappa\leqslant\kappa_1$, if a geodesic triangle with side lengths~$2a,b,b'$ satisfies~$\max(2a,b,b')\leqslant\kappa$, then, denoting by~$m$ the length of the median corresponding to the side of length~$2a$ (as in figure~\ref{fig-apollonius-sphere}), we have
\begin{equation}\label{estimate-apollonius-positive}
m^2+ a^2 -\frac{b^2+b'^2}2 \leqslant C_1\, \mathcal K_{\max} \, \kappa^2 a^2.
\end{equation}
\end{lem}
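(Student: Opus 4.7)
The inequality~\eqref{estimate-apollonius-positive} is a curvature-corrected Apollonius identity: in flat space it reduces to the classical equality $m^2+a^2=(b^2+b'^2)/2$, and the right-hand side is meant to capture the deviation coming from the upper sectional-curvature bound. My plan is therefore to compare the given triangle in $\mathcal M$ with a triangle having the same side lengths in the model space $M_{\mathcal K_{\max}}$ of constant sectional curvature $\mathcal K_{\max}$, then reduce to the spherical estimate that has already been proved inside the proof of Lemma~\ref{lem:E-particles-local}.

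First I would fix $\kappa_1>0$ small enough that every geodesic triangle in $\mathcal M$ with sides bounded by $\kappa_1$ is contained in a geodesically convex normal ball of radius strictly less than $r_{\mathcal M}$ and, when $\mathcal K_{\max}>0$, has perimeter strictly less than $2\pi/\sqrt{\mathcal K_{\max}}$; Assumption~\ref{Ass-M} is exactly what allows such a choice. Given a triangle $(x_*,x_*',y)$ with side lengths $2a,b,b'\leqslant\kappa\leqslant\kappa_1$, I form a comparison triangle $(\tilde x_*,\tilde x_*',\tilde y)$ in $M_{\mathcal K_{\max}}$ with the same three side lengths, denote by $\tilde x_m$ the midpoint of the comparison side of length $2a$, and set $\tilde m=\tilde d(\tilde x_m,\tilde y)$.

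The key step is the local Toponogov comparison theorem for an upper sectional-curvature bound (equivalently, the local CAT$(\mathcal K_{\max})$ inequality, see e.g.~\cite{berger2003panoramic}): inside the normal ball above, distances from a point on a side of the triangle in $\mathcal M$ to the opposite vertex are \emph{at most} the corresponding distances in the comparison triangle. Applied with the midpoint of the side $x_*x_*'$ and the opposite vertex $y$, this yields
\begin{equation*}
m=d(x_m,y)\leqslant\tilde d(\tilde x_m,\tilde y)=\tilde m,
\end{equation*}
so it suffices to establish~\eqref{estimate-apollonius-positive} with $m$ replaced by $\tilde m$, in the model space. When $\mathcal K_{\max}>0$, the model space is the sphere of radius $R=1/\sqrt{\mathcal K_{\max}}$, and rescaling every length by $R$ turns the unit-sphere inequality~\eqref{appo} established inside the proof of Lemma~\ref{lem:E-particles-local} directly into $\tilde m^2+a^2-\tfrac12(b^2+b'^2)\leqslant C\,\mathcal K_{\max}\,\kappa^2 a^2$, with a constant $C$ depending only on $\kappa_1$. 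When $\mathcal K_{\max}\leqslant 0$ the model space is Euclidean or hyperbolic, for which the classical Apollonius identity (respectively its hyperbolic analogue) gives the even stronger nonpositivity $\tilde m^2+a^2-(b^2+b'^2)/2\leqslant 0$.

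The main obstacle is the first step: the Toponogov/CAT inequality for an \emph{upper} sectional-curvature bound is only local, valid inside normal balls that are not too large compared with $1/\sqrt{\mathcal K_{\max}}$. This is exactly what forces the hypothesis $\max(2a,b,b')\leqslant\kappa_1$ with $\kappa_1$ depending on both $r_{\mathcal M}$ and $\mathcal K_{\max}$. Once the median comparison $m\leqslant\tilde m$ is in hand, the remaining step is a straightforward rescaling of the already-proven spherical Apollonius estimate, and no further analytic difficulty arises.
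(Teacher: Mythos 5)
Your argument is correct, and it reaches the same endpoint as the paper's: reduce to a comparison triangle in the model space $M_{\mathcal K_{\max}}$ and invoke the constant-curvature Apollonius estimate already proved inside Lemma~\ref{lem:E-particles-local} (estimate~\eqref{appo}). The route is, however, organizationally different in the comparison step. You form the comparison triangle with the \emph{same three side lengths} $2a,b,b'$ and invoke the local CAT$(\mathcal K_{\max})$ (side-side-side/Alexandrov) comparison for manifolds with an upper sectional-curvature bound, which directly gives $m\leqslant\tilde m$; the estimate then follows by substituting $\tilde m$ into the model-space Apollonius bound. The paper instead uses the Rauch comparison theorem in hinge form, as stated in Theorem~\ref{lem-rauch}: it fixes the half-side $a$, the median $m$, and the included angles $\theta$ and $\pi-\theta$ at the midpoint, builds a model-space configuration with those data, and deduces $b\geqslant\bar b$, $b'\geqslant\bar b'$; it then replaces $b^2+b'^2$ by $\bar b^2+\bar b'^2$ on the left-hand side, reducing to a model-space triangle with the \emph{original} median $m$ but new sides $\bar b,\bar b'$. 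Both comparisons follow from the same Jacobi-field analysis and are equivalent in the small-triangle regime, so neither approach is more powerful; yours is marginally more economical because it bounds the median in one stroke and does not require the additional check that $\max(2a,\bar b,\bar b')\leqslant\kappa$, which the paper must make before invoking the constant-curvature estimate. One small caveat if you intend your argument to fit the paper as written: Theorem~\ref{lem-rauch} is stated only in hinge form, so you would need to cite the CAT$(\kappa)$ triangle comparison separately --- it is derivable from Theorem~\ref{lem-rauch}, but it is not literally that statement.
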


To prove Lemma~\ref{lem-apollonius-estimate-M-positive}, we will use the Rauch comparison theorem, that we recall here:
\begin{theo}[\cite{berger2003panoramic}*{last part of Theorem~73},~\cite{cheeger1975comparison}*{Corollary~$1.30$}]\label{lem-rauch}
%We suppose that~$\mathcal M$ has a positive injectivity radius~$r_{\mathcal M}$, and that its sectional curvature is bounded from above by~$\mathcal K_{\max}\geqslant0$.
Let the manifold~$\mathcal M$ satisfy Assumption~\ref{Ass-M}, and~$r=\min(r_{\mathcal M},\frac{\pi}{\sqrt{K_{\max}}})$. Let~$(x,y,z)$ be a minimal geodesic triangle on~$\mathcal M$ such that the geodesic joining~$x$ and~$y$ is included in the geodesic ball of radius~$r$ centered at~$z$.
Let~$(\bar x,\bar y,\bar z)$ be a minimal geodesic triangle on the sphere of constant curvature~$\mathcal K_{\max}$ (or the plane if~$\mathcal K_{\max}=0$) such that~$d(x,z)=d(\bar x,\bar z)$ and~$d(y,z)=d(\bar y,\bar z)$, the angle between the geodesics at~$\bar z$ being the same as the one at~$z$ in the geodesic triangle of~$\mathcal M$. Then~$d(x,y)\geqslant d(\bar x,\bar y)$.
\end{theo}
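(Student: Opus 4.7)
The plan is to reduce the inequality to the spherical Apollonius estimate already derived inside the proof of Lemma~\ref{lem:E-particles-local}, by constructing an auxiliary triangle on the model space of constant curvature $\mathcal K_{\max}$ via Rauch comparison (Theorem~\ref{lem-rauch}). We may assume $\mathcal K_{\max}\geqslant 0$, since replacing $\mathcal K_{\max}$ by any non-negative upper bound of the sectional curvature only weakens the conclusion. Fix a minimal geodesic from $x_*$ to $x_*'$ of length $2a$ in $\mathcal M$, let $x_m$ be its midpoint, and fix a minimal geodesic from $x_m$ to $y$ of length $m$. This splits the geodesic triangle $(x_*,x_*',y)$ into two sub-triangles $(x_*,x_m,y)$ and $(x_*',x_m,y)$ sharing the median side. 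Let $\theta$ be the angle at $x_m$ in the first sub-triangle; since $x_*,x_m,x_*'$ lie on a common minimal geodesic, the angle at $x_m$ in the second sub-triangle is $\pi-\theta$.

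On the model space of constant curvature $\mathcal K_{\max}$ (a sphere of radius $1/\sqrt{\mathcal K_{\max}}$, or the plane if $\mathcal K_{\max}=0$), build a comparison configuration as follows: pick a point $\tilde x_m$ and a geodesic through it, place $\tilde x_*$ and $\tilde x_*'$ at distance $a$ on opposite sides, and let $\tilde y$ be the point at distance $m$ from $\tilde x_m$ whose tangent direction makes angle $\theta$ with that of $\tilde x_m\tilde x_*$ (hence $\pi-\theta$ with that of $\tilde x_m\tilde x_*'$). Choose
\[
\kappa_1:=\tfrac12\min\!\Bigl(r_{\mathcal M},\,\pi/\sqrt{\mathcal K_{\max}}\Bigr),
\]
with the convention $\pi/\sqrt{0}=+\infty$. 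Under $\kappa\leqslant\kappa_1$ we have $a,b,b'\leqslant\kappa_1$, and the triangle inequality gives $m\leqslant a+\min(b,b')\leqslant\tfrac32\kappa_1$, so each sub-triangle $(x_*,x_m,y)$ and $(x_*',x_m,y)$ lies inside the geodesic ball of radius $\min(r_{\mathcal M},\pi/\sqrt{\mathcal K_{\max}})$ centered at $x_m$. Theorem~\ref{lem-rauch}, applied twice with hinge vertex $x_m$ and the spherical comparison at angles $\theta$ and $\pi-\theta$, then yields
\[
\tilde b:=d(\tilde x_*,\tilde y)\leqslant b,\qquad \tilde b':=d(\tilde x_*',\tilde y)\leqslant b'.
\]

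By construction $\tilde x_*,\tilde x_m,\tilde x_*'$ lie on a common minimal geodesic of length $2a$, so the three points $\tilde x_*,\tilde x_*',\tilde y$ form a geodesic triangle on the model space with side lengths $2a,\tilde b,\tilde b'$, and $m$ is precisely the length of its median corresponding to the side of length $2a$. Rescaling the unit-sphere estimate~\eqref{appo} to the sphere of curvature $\mathcal K_{\max}$ via the substitution $(a,\tilde b,\tilde b',m)\mapsto(a\sqrt{\mathcal K_{\max}},\tilde b\sqrt{\mathcal K_{\max}},\tilde b'\sqrt{\mathcal K_{\max}},m\sqrt{\mathcal K_{\max}})$ and dividing by $\mathcal K_{\max}$ gives
\[
m^2+a^2-\tfrac12\bigl(\tilde b^{\,2}+\tilde b'^{\,2}\bigr)\leqslant C_1\,\mathcal K_{\max}\,\kappa^2\,a^2
\]
for a constant $C_1$ depending only on $\kappa_1$; the case $\mathcal K_{\max}=0$ is the exact Euclidean Apollonius identity $m^2+a^2=\tfrac12(\tilde b^{\,2}+\tilde b'^{\,2})$, which gives the bound with zero right-hand side. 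Combining with $\tilde b\leqslant b$, $\tilde b'\leqslant b'$, hence $\tfrac12(\tilde b^{\,2}+\tilde b'^{\,2})\leqslant\tfrac12(b^2+b'^2)$, yields
\[
m^2+a^2-\tfrac12(b^2+b'^2)\;\leqslant\;m^2+a^2-\tfrac12\bigl(\tilde b^{\,2}+\tilde b'^{\,2}\bigr)\;\leqslant\;C_1\,\mathcal K_{\max}\,\kappa^2\,a^2,
\]
which is~\eqref{estimate-apollonius-positive}.

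The main technical obstacle is the bookkeeping required to verify the hypotheses of Theorem~\ref{lem-rauch}: each sub-triangle must be a minimal geodesic triangle whose side opposite $x_m$ lies inside the ball around $x_m$ of radius $\min(r_{\mathcal M},\pi/\sqrt{\mathcal K_{\max}})$. This forces $\kappa_1$ to involve both the injectivity radius of $\mathcal M$ and the conjugate radius of the comparison sphere, and requires a uniform bound on the median length $m$ (which is not in the hypotheses but is controlled by the triangle inequality $m\leqslant\tfrac32\kappa$). Once these size constraints are in place, the proof is simply a repackaging of the spherical Apollonius estimate of Lemma~\ref{lem:E-particles-local} through the Rauch comparison.
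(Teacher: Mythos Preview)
Your proposal does not prove the stated theorem. Theorem~\ref{lem-rauch} is the Rauch comparison theorem itself, which the paper does not prove but merely quotes from \cite{berger2003panoramic} and \cite{cheeger1975comparison}. What you have written is, instead, a proof of Lemma~\ref{lem-apollonius-estimate-M-positive} (the Apollonius-type median estimate on a manifold), in which Theorem~\ref{lem-rauch} is invoked as a black box: you split the triangle at the midpoint $x_m$, apply the Rauch inequality at the two hinges to get $\tilde b\leqslant b$ and $\tilde b'\leqslant b'$, and then feed these into the spherical estimate~\eqref{appo}. Nowhere do you establish the comparison inequality $d(x,y)\geqslant d(\bar x,\bar y)$ that constitutes Theorem~\ref{lem-rauch}; you assume it.

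If your intended target was actually Lemma~\ref{lem-apollonius-estimate-M-positive}, then your argument is correct and is essentially the same as the paper's own proof of that lemma (same hinge decomposition at $x_m$, same appeal to Rauch on each sub-triangle, same reduction to the constant-curvature Apollonius estimate; only the precise numerical choice of $\kappa_1$ differs slightly). If the target really is Theorem~\ref{lem-rauch}, you have not addressed it at all: a genuine proof requires a Jacobi-field comparison along geodesics issuing from the hinge vertex, as in the cited references, and nothing in your write-up touches that.
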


\begin{proof}[Proof of Lemma~\ref{lem-apollonius-estimate-M-positive}] %We first prove this Lemma when the manifold~$\mathcal M$ is a sphere or the Euclidean space, we will then use a comparison argument.
We first note that the case of the unit sphere ($\mathcal M=\mathbb{S}$,~$\mathcal K_{\max}=1$) corresponds exactly to~\eqref{appo}, in the proof of Lemma~\ref{lem:E-particles-local}, which is valid for any~$\kappa_1<\frac{2\pi}3$. A simple scaling argument (dividing all distances by~$\sqrt{\mathcal K_{\max}}$) shows that this is also the case for the sphere of constant curvature~$\mathcal K_{\max}>0$ (for any~$\kappa_1<\frac{2\pi}{3\sqrt{\mathcal K_{\max}}}$). Moreover, the case of the Euclidean space corresponds to the Apollonius theorem on the plane, for which the inequality is an equality.

% The comparison that we will use is the following corollary to the Rauch comparison theorem:
% \begin{lem}\label{lem-rauch}~\cite{berger2003panoramic}*{last part of Theorem~73},~\cite{cheeger1975comparison}*{Corollary~$1.30$}.
% We suppose that~$\mathcal M$ has a positive injectivity radius~$r_{\mathcal M}$, and that its sectional curvature is bounded from above by~$\mathcal K_{\max}\geqslant0$.
% Let~$r=\min(r_{\mathcal M},\frac{\pi}{\sqrt{K_{\max}}})$. We suppose that~$(x,y,z)$ is a minimal geodesic triangle on~$\mathcal M$ such that the geodesic joining~$x$ and~$y$ is included in the geodesic ball of radius~$r$ centered at~$z$.
% Let~$(\bar x,\bar y,\bar z)$ be a minimal geodesic triangle on the sphere of constant curvature~$\mathcal K_{\max}$ (or the plane if~$\mathcal K_{\max}=0$) such that~$d(x,z)=d(\bar x,\bar z)$ and~$d(y,z)=d(\bar y,\bar z)$, the angle between the geodesics at~$\bar z$ being the same as the one at~$z$ in the geodesic triangle of~$\mathcal M$.
% 
% Then~$d(y,z)\geqslant d(\bar y,\bar z)$.
% \end{lem}

\medskip

In the general case, we take~$\kappa_1<\frac23r$ where~$r=\min\left(r_{\mathcal M},\frac{\pi}{\sqrt{\mathcal K_{\max}}}\right)$ and~$C_1$ given by the case of the sphere of constant curvature~$\mathcal K_{\max}$ (or the Euclidean space if~$\mathcal K_{\max}=0$).

Let~$(x,x',y)$ be a geodesic triangle in~$\mathcal M$ with side lengths~$2a$,~$b$, and~$b'$ (see Figure~\ref{fig-rauch-M-sphere}). We denote by~$z$ the midpoint of the geodesic joining~$x$ to~$x'$ and by~$m$ the distance~$d(y,z)$. Finally, we write~$\theta$ the angle at the point~$z$ in the triangle~$(x,y,z)$, therefore the angle at~$z$ in the triangle~$(x',y,z)$ is~$\pi-\theta$ (see figure~\ref{fig-rauch-M-sphere}). We suppose that~$\max(2a,b,b')=\kappa\leqslant\kappa_1$, therefore we get~$a+b\leqslant\frac32\kappa_1<r$ and by triangle inequality the geodesic arc from~$x$ to~$y$ is included in the ball of center~$z$ and radius~$r$.
\begin{figure}[h]
\centering
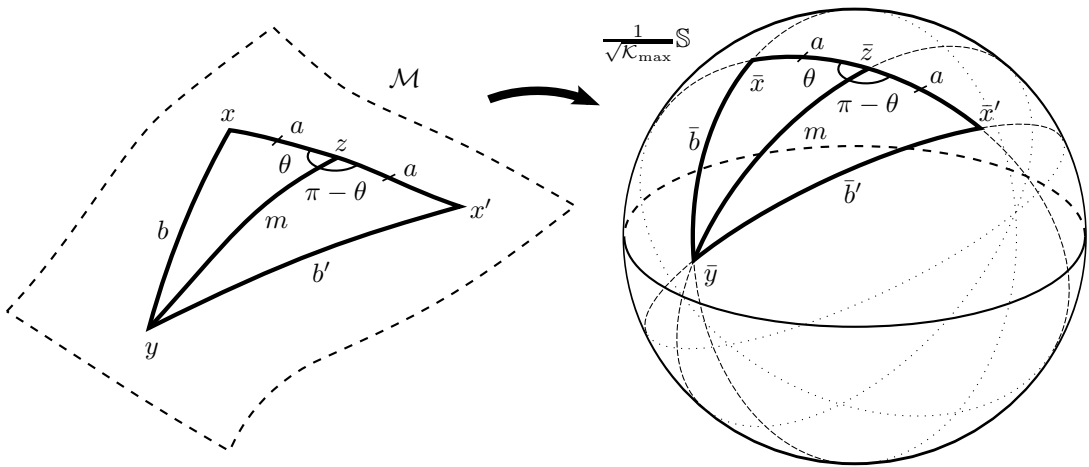
\caption{Triangle configurations on~$\mathcal M$ and on the sphere of constant curvature~$\mathcal K_{\max}$.}
\label{fig-rauch-M-sphere} 
\end{figure}
We now take~$(\bar x,\bar x',\bar y)$ a triangle on the sphere of constant curvature~$\mathcal K_{\max}$ with the same configuration for the lengths~$a$,~$m$ and the angles~$\theta$ and~$\pi-\theta$, and we denote by~$\bar b$ and~$\bar b'$ the distances~$d(\bar x,\bar y)$ and~$d(\bar x',\bar y)$ (see Figure~\ref{fig-rauch-M-sphere}). We can apply Theorem~\ref{lem-rauch} and we get~$b\geqslant\bar b$ and~$b'\geqslant\bar b'$. Therefore if~$\max(2a,b,b')\leqslant\kappa\leqslant\kappa_1$, we get~$\max(2a,\bar b,\bar b')$ and we apply the estimate~\eqref{estimate-apollonius-positive} in the case of the sphere of constant curvature~$\mathcal K_{\max}$ (or the Euclidean space if~$\mathcal K_{\max}=0$) to get
\[m^2+a^2-\frac{b^2+b'^2}2\leqslant m^2+a^2-\frac{\bar b^2+\bar b'^2}2\leqslant C_1\, \mathcal K_{\max}\, \kappa^2a^2,\]
which ends the proof of Lemma~\ref{lem-apollonius-estimate-M-positive}.
\end{proof}

We can now describe the proof of the main result of the section:
\begin{proof}[Proof of Theorem~\ref{theo-stability-dirac-midpoint-M}]
We first remark that the good framework is~$\mathcal P_2(\mathcal M)$, the set of probability measures~$\rho$ such that there exists~$x\in\mathcal M$ with~$W_2(\rho,\delta_x)<+\infty$. We recall that the Wasserstein distance~$W_2$ is a distance on~$\mathcal P_2(\mathcal M)$. We can define the energy~$E(\rho)$ in the same way as~\eqref{def-energy}:
\begin{equation}
  E(\rho)=\int_{\mathcal M\times\mathcal M} d(x,y)^2\,d\rho(x)\,d\rho(y),
\label{def-energy-M}
\end{equation}
and we get that Lemma~\ref{lem-cheb-sphere} is still valid, replacing~$\mathbb{S}$ by~$\mathcal M$ and~$\mathcal P(\mathbb{S})$ by~$\mathcal P_2(M)$. In particular, the estimates~\eqref{E-4W2} and~\eqref{Wxbar} give that for any probability measure~$\rho$, we have that~$\rho\in\mathcal P_2(\mathcal M)$ if and only if~$E(\rho)$ is finite.

The time derivative is still given by~\eqref{dEdt}, where~$\alpha$ is given by~\eqref{def-alpha} and~$\mathbb{S}$ is replaced by~$\mathcal M$ in these two expressions. The global estimate given by Lemma~\ref{lem:E-particles-global} holds, and the proof, only relying on triangle inequalities, is unchanged. The counterpart of local estimate provided by Lemma~\ref{lem:E-particles-local} is now exactly given by Lemma~\ref{lem-apollonius-estimate-M-positive}. Therefore we obtain the same proposition as Proposition~\ref{prop-decay-energy}, replacing~$\mathcal P(\mathbb{S})$ by~$\mathcal P_2(M)$.

The control of the displacement of~$\bar x$ is performed exactly in the same way as in subsection~\ref{subsec-local-stability}. The only detail to check is that~$M_{t,t'}= \sup_{s\in[t,t']}d(\bar x(t),\bar x(s))$ is finite (we cannot use here the fact that~$\mathcal M$ is compact). And indeed we have
\begin{align*}
d(\bar x(t),\bar x(s))&=W_2(\delta_{\bar x(t)},\delta_{\bar x(s)})\\
&\leqslant W_2(\delta_{\bar x(s)},\rho_s)+W_2(\rho_s,\rho_t)+W_2(\rho_t,\delta_{\bar x(t)}) \\
&\leqslant\sqrt{E(\rho_s)} + \sqrt{E(\rho_t)} + W_2(\rho_s,\rho_t)\\
&\leqslant2\sqrt{E(\rho_0)} + W_2(\rho_s,\rho_t),%\\
%&\leqslant2\, W_2(\delta_{x_0},\rho_s)+2 W_2(\delta_{x_0},\rho_t)+ W_2(\rho_s,\rho_t),
\end{align*}
thanks to Lemma~\ref{lem-cheb-sphere} (or rather the generalization of that result to~$\mathcal M$). This estimate implies that~$s\mapsto d(\bar x(t),\bar x(s))$ is bounded on the compact~$[t,t']$, since~$\rho\in C(\mathbb{R}_+,\mathcal P_2(\mathcal M))$.
\end{proof}

\section{Extension to non midpoint models}\label{section:non-midpoint}

\subsection{Contracting collision kernels}\label{subsection:contracting-kernels}

Theorem~\ref{theo-stability-dirac-midpoint-M} shows that the local stability of Dirac masses for the midpoint model on the sphere (Theorem~\ref{theo-stability-dirac-sphere}) can be generalized to a Riemannian manifold~$\mathcal M$. One can wonder whether this stability result can also be obtained under weaker assumptions on the collision kernel~$K$. 
% 
% 
% provides the asymptotic behaviour of any solution starting close to a Dirac mass, but Proposition~\ref{prop-existence-uniqueness} does not apply directly to prove existence and uniqueness of such solutions. Indeed, for instance in the case of the unit sphere,  the map~$x_*'\mapsto K(\cdot,x_*,x_*')$ is discontinuous for~$W_2$ (and therefore not Lipschitz) in the neighborhood of the antipodal point of~$x_*$.
% 
% We also want to prove the counterpart of Theorem~\ref{theo-stability-dirac-midpoint-M} for a wider class of models, in particular for models such that Proposition~\ref{prop-existence-uniqueness} holds true. 
For instance, a possible generalization of midpoint models (see Assumption~\ref{Ass-K1}) is the case of a ``noisy'' interaction kernel, of the form
\begin{equation}
\widetilde K_\chi(x,x_*,x_*')=\frac12\int_{\mathcal M}[\chi(x_*,x_*',y)K(x,y,x_*')+\chi(x_*',x_*,y)K(x,x_*,y)]\,dy,\label{K-noisy}
\end{equation}
where~$K$ is a midpoint kernel, and~$y\mapsto\chi(x_*,x_*',y)$ is a probability measure for any~$x_*,\,x_*'\in\mathcal M$. From the modeling point of view,~$y\mapsto \chi(x_*,x_*',y)$ represents the error that an individual located at~$x_*$ makes when it estimates the position of an individual actually located at~$x_*'$. For instance, if we consider that the error a particle (that could be e.g. a bird) makes when it collides with another particle follows a uniform law in a geodesic ball of radius~$\varepsilon$ centered at the actual position of the other individual, the corresponding term~$\chi$ would be:
\begin{equation}
\chi_\varepsilon(x_*,x_*',y)=\frac{\mathbbm{1}_{d(x_*',y)\leqslant\varepsilon}}{\int_{\{y|d(x_*',y)\leqslant \varepsilon\}}dy}.\label{chi-epsilon}
\end{equation}
Another possibility is to assume that the estimation of the other particle's position is precise, but that there is an error in the selection of the post-collision position. This latter assumption was made in 
%In that case one can show that~$\widetilde K_\varepsilon$ satisfies the condition of Proposition~\ref{prop-existence-uniqueness}. 
%Note that 
the original model of Bertin, Droz and Grégoire on the circle~\cite{bertin2006boltzmann}.% is done in the reverse way: first the interacting particles compute exactly their average direction, and then they adopt a direction close to that one, following a given law. 
The corresponding kernel would then be
\begin{equation}\label{K-noisy2}
 \widetilde K_{BDG}(x,x_*,x_*')=\int_{\mathcal M}\chi_{BDG}(x,y,x_*)K(y,x_*,x_*')\,dy,
\end{equation}
 where~$\chi_{BDG}(x,y,x_*)$ is now the probability of obtaining an post-collision position~$y$, knowing that the midpoint is precisely~$x$ (this probability may depend on the pre-collision position~$x_*$). 

\begin{rem}
 One can check that the collision kernel~$K$ defined by~\eqref{K-noisy}-\eqref{chi-epsilon} satisfies the condition of Proposition~\ref{prop-existence-uniqueness}. 

On the contrary, it is not the case of the second model~\eqref{K-noisy2}, even when~$\chi_{BDG}$ is smooth: consider~$\mathcal M=\mathbb{S} ^1$, the map~$x_*'\mapsto\widetilde K_{BDG}(\cdot,x_*,x_*')$, from~$\mathcal M$ into~$\left(\mathcal P_2(\mathcal M),W_2\right)$ is then discontinuous for~$x_*'$ in a neighborhood of~$-x_*'$, just like the midpoint model.
\end{rem}

% 
% This kind of model, even when taking~$\chi_{BDG}$ smooth, would not satisfy the assumption of Proposition~\ref{prop-existence-uniqueness}, since the map~$x_*'\mapsto\widetilde K_{BDG}(\cdot,x_*,x_*')$ is in general still discontinuous for~$W_2$, in the same way as for the pure midpoint model.

A natural extension of Theorem~\ref{theo-stability-dirac} would then be to show that for~$\varepsilon>0$ small, the solution of the kinetic equation with a kernel defined by~\eqref{K-noisy}-\eqref{chi-epsilon} converges exponentially fast to a given profile. This problem seems to be difficult, since the convexity arguments used in related problems (see e.g.~\cites{carrillo2006contractions, bolley2013uniform}) do not apply directly when the geometry of the set of positions is complex (for instance when~$\mathcal M$ is a sphere). We are not able to prove such a general result here. 
%We are not able to show a result of this nature. The existence of stationary profiles and their uniqueness (in a sense to be defined, for example up to rotations) is an open problem which is left for future work. The main difficulty here is the lack of convexity that comes from the geometry of the sphere. 
However, for a class of kernels, called ``contracting kernels'', for which the Dirac masses are stationary states, we are able to extend the result of Theorem~\ref{theo-stability-dirac-midpoint-M}. %The price to pay will be a lower exponential rate of convergence. 
%This class includes some of the kernels of the form~\eqref{K-noisy} and~\eqref{K-noisy2}.%, and allows to have kernels which satisfy the assumptions of Proposition~\ref{prop-existence-uniqueness} (existence and uniqueness of solutions to the kinetic model).
%From now on, we suppose that~$\mathcal M$ is a complete connected Riemannian manifold with positive injectivity radius~$r_{\mathcal M}$, and sectional curvature bounded above and below.
%The main result concerns the nonlinear stability of Dirac masses, provided the collision kernel~$K$ satisfies a contraction property, and a few technical assumptions. 
%We first present the main contraction property and its variant, the mid-point contraction property:
We define as ``contracting kernels'' the collision kernels~$K$ satisfying the following property:
\begin{ass}[Contraction property]\label{Ass:contraction}
 There exists~$\beta\in[0,1)$ such that for any~$x_*,x_*'\in\mathcal M$,
\[\int_{\mathcal M} d(x,x_*)^2 K(x,x_*,x_*') dx \leqslant \tfrac14(1+\beta)d(x_*,x_*')^2,\]
\end{ass}
Assumption~\ref{Ass:contraction} corresponds to the fact that, when a particle located at~$x_*'$ interact with another one located at~$x_*$, the mean squared distance to~$x_*$ is decreased by a factor~$\frac14(1+\beta)$. We propose below a slightly more restrictive condition, which appears as a generalization of the midpoint model studied in Section~\ref{section:manifold} (see Assumption~\ref{Ass-K1}):
\begin{ass}[Midpoint contraction property]\label{Ass:midpoint_prop}
There exists~$\widetilde{\beta}\in[0,1)$ such that for any~$x_*,x_*'$, we have
\[\int_{\mathcal M} d(x,M_{x_*,x_*'})^2 K(x,x_*,x_*') dx \leqslant \widetilde{\beta}\,\frac{d(x_*,x_*')^2}4 ,\]
where~$M_{x_*,x_*'}$ is the set of midpoints of minimal geodesics joining~$x_*$ and~$x_*'$, and~$d(x,M_{x_*,x_*'})$ is the geodesic distance between~$x$ and this set.
\end{ass}
% \begin{enumerate}
% \item[(H1)] Contraction property: there exists~$\beta\in[0,1)$ such that for any~$x_*,x_*'\in\mathcal M$, we have
% \[\int_{\mathcal M} d(x,x_*)^2 K(x,x_*,x_*') dx \leqslant \tfrac14(1+\beta)d(x_*,x_*')^2,\]
% \item[(H1')] Midpoint contraction property: there exists~$\widetilde{\beta}\in[0,1)$ such that for any~$x_*,x_*'$, we have
% \[\int_{\mathcal M} d(x,M_{x_*,x_*'})^2 K(x,x_*,x_*') dx \leqslant \widetilde{\beta}\,\frac{d(x_*,x_*')^2}4 ,\]
% where~$M_{x_*,x_*'}$ is the set of midpoints of minimal geodesics joining~$x_*$ and~$x_*'$, and~$d(x,M_{x_*,x_*'})$ is the geodesic distance between~$x$ and this set.
% \end{enumerate}
Assumption~\ref{Ass:midpoint_prop} expresses that, after a collision of particles located at~$x_*$ and~$x_*'$, the mean squared distance to the ``nearest midpoint'' of~$x_*$ and~$x_*'$ is decreased by a factor~$\widetilde{\beta}$. At first glance, it is not clear that Assumption~\ref{Ass:contraction} ensures that the model is close to a midpoint model. This appears more clearly with Assumption~\ref{Ass:midpoint_prop}, which is why we have introduced this assumption here. 

From a modeling point of view, Assumption~\ref{Ass:contraction} is satisfied when two interacting particles are not necessarily able to compute precisely their midpoint, but are able to reduce their distance (by a fixed factor) after the collision. We will prove in the next proposition that Assumption~\ref{Ass:midpoint_prop} implies Assumption~\ref{Ass:contraction} (at least if~$\widetilde{\beta}+2\sqrt{\widetilde{\beta}}<1$), while it will appear in the next subsection that Assumption~\ref{Ass:contraction} is the precise condition that is needed to extend the results of Theorem~\ref{theo-stability-dirac-midpoint-M}. 
%. In view of the last part of Proposition~\ref{prop-betatilde}, we will now only focus on the property~(H1). In a modeling point of view, this property allows to consider particles which cannot compute precisely their midpoint, but are able to reduce their distance by a fixed factor when interacting.

%Before presenting the other technical assumptions that we will need, let us make some comments concerning these hypotheses~(H1) and~(H1'), under the form of a proposition. 
%The following proposition details the existing relations between the various assumptions we have introduced for the collision operator.%Assumption~\ref{Ass:contraction} and Assumption~\ref{Ass:midpoint_prop}:

\begin{prop}
\label{prop-betatilde}~
\begin{itemize}
\item Assumption~\ref{Ass:midpoint_prop} with~$\widetilde{\beta}=0$ is equivalent to Assumption~\ref{Ass:contraction} with~$\beta=0$, and to the fact that~$K$ is a midpoint kernel, that is Assumption~\ref{Ass-K1}.
\item When~$\mathcal M=\mathbb{R}^n$, Assumption~\ref{Ass:midpoint_prop} is equivalent to Assumption~\ref{Ass:contraction} with~$\beta=\widetilde{\beta}$.
\item Assumption~\ref{Ass:midpoint_prop} implies Assumption~\ref{Ass:contraction} with~$\beta=\widetilde{\beta}+2\sqrt{\widetilde{\beta}}$.
\end{itemize}
\end{prop}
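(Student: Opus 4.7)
The plan is to handle the three bullets successively, using as the single new ingredient each time, respectively, the geodesic triangle inequality, the Cauchy--Schwarz inequality, and the parallelogram identity in $\mathbb{R}^n$. I would treat them in the order 1, 3, 2, since the second bullet is really a refinement of the third in the flat case.

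For the first bullet, Assumption~\ref{Ass:midpoint_prop} with $\widetilde\beta=0$ simply says that the non-negative integrand $d(x,M_{x_*,x_*'})^2$ vanishes $K(\cdot,x_*,x_*')$-almost everywhere, which is exactly Assumption~\ref{Ass-K1}. If $K$ is a midpoint kernel, every $x$ in the support of $K(\cdot,x_*,x_*')$ satisfies $d(x,x_*)=\tfrac12 d(x_*,x_*')$, giving Assumption~\ref{Ass:contraction} with $\beta=0$. For the converse of this last implication, I would invoke the $K$-symmetry $K(\cdot,x_*,x_*')=K(\cdot,x_*',x_*)$ to obtain the analogous bound on $\int d(x,x_*')^2\,K\,dx$, sum the two, and compare with the chain $d(x_*,x_*')^2\leqslant(d(x,x_*)+d(x,x_*'))^2\leqslant 2[d(x,x_*)^2+d(x,x_*')^2]$ integrated against $K$: equality is then forced $K$-a.e.\ in both steps, so $x$ lies on a minimal geodesic from $x_*$ to $x_*'$ with $d(x,x_*)=d(x,x_*')$, i.e.\ $x\in M_{x_*,x_*'}$.

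For the third bullet, I would choose for each $x\in\mathcal M$ a point $m_x\in M_{x_*,x_*'}$ realising the infimum defining $d(x,M_{x_*,x_*'})$; the geodesic triangle inequality then gives $d(x,x_*)\leqslant d(x,M_{x_*,x_*'})+\tfrac12 d(x_*,x_*')$. Squaring, integrating against $K(\cdot,x_*,x_*')$, and controlling the cross term $d(x_*,x_*')\int d(x,M_{x_*,x_*'})\,K\,dx$ by the Cauchy--Schwarz inequality combined with Assumption~\ref{Ass:midpoint_prop}, one obtains
\[
\int d(x,x_*)^2\,K(x,x_*,x_*')\,dx\leqslant\tfrac14 d(x_*,x_*')^2\bigl(1+2\sqrt{\widetilde\beta}+\widetilde\beta\bigr),
\]
which is Assumption~\ref{Ass:contraction} with $\beta=\widetilde\beta+2\sqrt{\widetilde\beta}$.

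The second bullet is the only place where the flat Euclidean structure is truly exploited: $M_{x_*,x_*'}$ reduces to the single point $m=\tfrac12(x_*+x_*')$, and one has the \emph{exact} parallelogram identity $|x-x_*|^2+|x-x_*'|^2=2|x-m|^2+\tfrac12|x_*-x_*'|^2$. Integrating against $K$ and using the $K$-symmetry to combine the two endpoint bounds of Assumption~\ref{Ass:contraction} into their half-sum yields, after subtracting $\tfrac12|x_*-x_*'|^2$, exactly Assumption~\ref{Ass:midpoint_prop} with $\widetilde\beta=\beta$. The reverse direction is where I expect the main subtlety: the identity being exact (rather than an inequality as in bullet~3) avoids the $\sqrt{\widetilde\beta}$ penalty, but one still has to handle the cross term $2(\bar x-m)\cdot(m-x_*)$ appearing in the expansion of $|x-x_*|^2$, where $\bar x=\int x\,K(x,x_*,x_*')\,dx$. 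The natural argument is to combine the $K$-symmetry with the Euclidean reflection symmetry of $K(\cdot,x_*,x_*')$ about $m$ to force $\bar x=m$, after which the cross term vanishes and one recovers the sharp relation $\beta=\widetilde\beta$.
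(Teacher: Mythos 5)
Your treatment of the first and third bullets is correct and follows essentially the same route as the paper's proof: the first via the vanishing of a non-negative integrand together with the equality case of the chain $d(x_*,x_*')^2\leqslant(d(x,x_*)+d(x,x_*'))^2\leqslant 2[d(x,x_*)^2+d(x,x_*')^2]$ (using the exchange symmetry $K(\cdot,x_*,x_*')=K(\cdot,x_*',x_*)$), the third via the triangle inequality to the midpoint set followed by Cauchy--Schwarz. Likewise, the forward half of the second bullet (Assumption~\ref{Ass:contraction} implies Assumption~\ref{Ass:midpoint_prop} with $\widetilde\beta=\beta$) goes through exactly as you and the paper say, by integrating the parallelogram identity $|x-x_*|^2+|x-x_*'|^2=2|x-m|^2+\tfrac12|x_*-x_*'|^2$ against $K$ and applying Assumption~\ref{Ass:contraction} to each endpoint.

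The genuine gap is in your proposed reverse direction of the second bullet. You invoke ``the Euclidean reflection symmetry of $K(\cdot,x_*,x_*')$ about $m$'' to force $\bar x=m$ and kill the cross term. That invariance is not a consequence of the exchange symmetry $K(\cdot,x_*,x_*')=K(\cdot,x_*',x_*)$ --- swapping the two arguments of $K$ is not the same as pushing the measure forward under $x\mapsto 2m-x$ --- and it is not among the proposition's hypotheses; it is essentially Assumption~\ref{Ass:H2}, a separate and later assumption not in force here. Without it, integrating the parallelogram identity and applying Assumption~\ref{Ass:midpoint_prop} only controls the \emph{average} of $\int|x-x_*|^2\,dK$ and $\int|x-x_*'|^2\,dK$, whereas Assumption~\ref{Ass:contraction} requires a bound on each individually, and the cross term $2(\bar x-m)\cdot(m-x_*)$ that separates the two cannot be discarded. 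Indeed the implication with $\beta=\widetilde\beta$ actually fails for kernels lacking this reflection symmetry: the kernel $K(\cdot,x_*,x_*')=\delta_{\frac12(x_*+x_*')+\gamma|x_*-x_*'|}$ on $\mathbb R$ (which the paper itself cites as a model violating Assumption~\ref{Ass:H2}) satisfies Assumption~\ref{Ass:midpoint_prop} with $\widetilde\beta=4\gamma^2$, yet the smallest admissible constant in Assumption~\ref{Ass:contraction} is $\beta=4\gamma+4\gamma^2=\widetilde\beta+2\sqrt{\widetilde\beta}$, saturating the third bullet. Your instinct that the reverse direction hides a cross-term subtlety is sound --- and, for what it is worth, the paper's one-line proof of that bullet (Apollonius plus integration) is equally inconclusive on this point --- but the reflection symmetry you reach for is not available, so this step of the argument does not go through as written.
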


This class includes some of the kernels of the form~\eqref{K-noisy} and~\eqref{K-noisy2}.

\begin{proof}[Proof of Proposition~\ref{prop-betatilde}]
For the first point, Assumption~\ref{Ass:midpoint_prop} with~$\widetilde{\beta}=0$ is obviously equivalent to the fact that the support of~$K(\cdot,x_*,x_*')$ is included in~$M_{x_*,x_*'}$. If the support of~$K(\cdot,x_*,x_*')$ is included in~$M_{x_*,x_*'}$, then since~$d(x,x_*)=\frac12 d(x_*,x_*')$, for all~$x\in M_{x_*,x_*'}$, we immediately get the estimate of Assumption~\ref{Ass:contraction} with~$\beta=0$.

Conversely, we have, for~$x\in\mathcal M$:
\begin{equation}
\label{eq-ti-cs}\tfrac14d(x_*,x_*')^2\leqslant\tfrac14[d(x_*,x)+d(x,x_*')]^2\leqslant\tfrac12[d(x_*,x)^2+d(x,x_*')^2],
\end{equation}
and these inequalities are equalities only if~$d(x_*,x_*')=d(x_*,x)+d(x,x_*')$ and~$d(x_*,x)=d(x_*',x)$, that is to say~$x$ is on a minimal geodesic between~$x_*$ and~$x_*'$ and it is precisely on the midpoint of this geodesic, i.e.~$x\in M_{x_*,x_*'}$. 
So if Assumption~\ref{Ass:contraction} is satisfied with~$\beta=0$, since the measures~$K(\cdot,x_*,x_*')$ and~$K(\cdot,x_*',x_*)$ are equal, integrating~\eqref{eq-ti-cs} against~$K(\cdot,x_*,x_*')$, we get~$\tfrac14d(x_*,x_*')^2\leqslant\tfrac14d(x_*,x_*')^2$. 
Therefore for all~$x$ in the support of~$K(\cdot,x_*,x_*')$, we have~$x\in M_{x_*,x_*'}$.

The second point comes from Apollonius theorem. Indeed in~$\mathbb{R}^n$,~$M_{x_*,x_*'}$ is reduced to the single point~$x_m=\frac12(x_*+x_*')$, and we have
\[|x-x_m|^2=\frac12(|x-x_*|^2+|x-x_*'|^2)+\frac14|x_*-x_*'|^2.\]
Integrating with respect to~$K(\cdot,x_*,x_*')$ gives the equivalence between Assumption~\ref{Ass:contraction} and Assumption~\ref{Ass:midpoint_prop} when$\mathcal M=\mathbb{R}^n$ and~$\widetilde{\beta}=\beta$.

The third point comes from the Cauchy-Schwarz inequality and the fact that~$K(\cdot,x_*,x_*')$ is a probability measure. 
Indeed, for~$x\in\mathcal M$ and~$y\in M_{x_*,x_*'}$, we have
\[d(x,x_*)\leqslant d(x,y)+d(y,x_*)=d(x,y)+\frac12d(x_*,x_*').\]
Therefore we have~$d(x,x_*)\leqslant d(x,M_{x_*,x_*'})+\frac12d(x_*,x_*')$, and
\[d(x,x_*)^2\leqslant d(x,M_{x_*,x_*'})^2+d(x,M_{x_*,x_*'})d(x_*,x_*')+\frac14d(x_*,x_*')^2.\]
We now integrate this inequality with respect to~$K(\cdot,x_*,x_*')$, and use the Cauchy-Schwarz inequality to estimate the second term. Since~$K(\cdot,x_*,x_*')$ is a probability measure, when Assumption~\ref{Ass:midpoint_prop} holds true, we get
\[\int_{\mathcal M} d(x,x_*)^2\,K(x,x_*,x_*')\,dx\leqslant\frac{\beta d(x_*,x_*')^2}4+\sqrt{\frac{\beta d(x_*,x_*')^2}4}d(x_*,x_*')+\frac14d(x_*,x_*')^2,\]
which gives Assumption~\ref{Ass:contraction} with~$\beta=\widetilde{\beta}+2\sqrt{\widetilde{\beta}}$.
\end{proof}

In order to enounce our main result, we will also need the two following technical assumptions on the collision kernel~$K$:

\begin{ass}[Midpoint symmetry for small distances]\label{Ass:H2}
There exists~$\kappa_0\in (0, r_{\mathcal M}]$ such that for any~$x_*,x_*'\in\mathcal M$ with~$d(x_*,x_*')<\kappa_0$, if we denote by~$x_m$ the midpoint of the (unique) geodesic arc between~$x_*$ and~$x_*'$, the probability~$K(\cdot,x_*,x_*')$ is symmetric with respect to~$x_m$ in the (open) geodesic ball~$B$ of center~$x_m$ and radius~$\kappa_0$. 
More precisely, we define~$S_{x_m}$ as the function mapping a point~$x\in B$ to its symmetric~$x'\in B$ (at the same distance from~$x_m$ and on the same geodesic passing through~$x_m$ as~$x$ to~$x_m$), that is to say~$S_{x_m}=\exp_{x_m}\circ(-\mathrm{Id})\circ exp_{x_m}^{-1}$, where~$\exp_{x_m}$ is the exponential map from the tangent space~$T_{x_m}\mathcal M$ to~$\mathcal M$.
The assumption on~$K$ consists in saying that for all bounded continuous functions~$f:B\to\mathbb{R}$, we have:
\[\int_{B} f(x) K(x,x_*,x_*') dx=\int_{B} f(S_{x_m}(x)) K(x,x_*,x_*') dx.\]
\end{ass}
Notice that in the above assumption, we have used the fact that the minimal geodesic arc connecting two points on a manifold is unique, provided the two points are close.
\begin{ass}[Higher moment control]\label{Ass:H3}
 There exists~$p>2$ and~$C>0$ such that for any~$x_*,x_*'$,% we have
\[\int_{\mathcal M} d(x,x_*)^p K(x,x_*,x_*') dx \leqslant C\, d(x_*,x_*')^p.\]
\end{ass}

% 
% \begin{enumerate}
% \item[(H2)] Midpoint symmetry for small distances: there exists~$\kappa_0>0$ such that~$\kappa_0\leqslant r_{\mathcal M}$, and for any~$x_*,x_*'\in\mathcal M$ with~$d(x_*,x_*')<\kappa_0$, if we denote by~$x_m$ the midpoint of the (unique) geodesic arc between~$x_*$ and~$x_*'$, the probability~$K(\cdot,x_*,x_*')$ is symmetric with respect to~$x_m$ in the (open) geodesic ball~$B$ of center~$x_m$ and radius~$\kappa_0$. 
% More precisely, we define~$S_{x_m}$ as the function mapping a point~$x\in B$ to its symmetric~$x'\in B$ (at the same distance from~$x_m$ and on the same geodesic passing through~$x_m$ as~$x$ to~$x_m$), that is to say~$S_{x_m}=\exp_{x_m}\circ(-\mathrm{Id})\circ exp_{x_m}^{-1}$, where~$\exp_{x_m}$ is the exponential map from the tangent space~$T_{x_m}\mathcal M$ to~$\mathcal M$.
% The assumption on~$K$ consists in saying that for all bounded continuous function~$f:B\to\mathbb{R}$, we have:
% \[\int_{B} f(x) K(x,x_*,x_*') dx=\int_{B} f(S_{x_m}(x)) K(x,x_*,x_*') dx.\]
% \item[(H3)] Higher moment control: there exists~$p>2$ and~$C>0$ such that for any~$x_*,x_*'$, we have
% \[\int_{\mathcal M} d(x,x_*)^p K(x,x_*,x_*') dx \leqslant C\, d(x_*,x_*')^p.\]
% \end{enumerate}
Assumption~\ref{Ass:H3} is not very restrictive, and prevents any bad behavior of~$K$ far from the minimal geodesics connecting~$x_*$ to~$x_*$.  Assumption~\ref{Ass:H2} is more restrictive. We will need it in some of the estimates in the proof, although it is not a necessary condition to obtain the local stability of Dirac masses. For instance, for the simple non symmetric model on~$\mathbb{R}$ given by~$K(\cdot,x_*,x_*')=\delta_{\frac12(x_*+x_*')+\gamma|x_*-x_*'|}$, it is possible (through the estimation of the first and second moment) to show that the solution converges to a Dirac mass with exponential rate when~$\gamma$ is sufficiently small, even though~$K$ does not satisfy Assumption~\ref{Ass:H2}. We will however not try to relax the Assumption~\ref{Ass:H2} in the present work.

\medskip

Finally, let us present some simple collision kernels that satisfy the properties stated in this subsection
\begin{exa}[examples of contracting kernels]~
 \begin{itemize}
  \item The midpoint model (see Assumption~\ref{Ass-K1}) satisfies Assumption~\ref{Ass:contraction} with~$\beta=0$, Assumption~\ref{Ass:H2} with~$\kappa_0=r_{\mathcal M}$, and Assumption~\ref{Ass:H3} with~$C=2^{-p}$, for any~$p>2$.

\item If we define a ``noisy interaction'' kernel as in~\eqref{K-noisy}, with
\begin{equation}
\chi(x_*,x_*',y)=\frac{\mathbbm{1}_{d(x_*',y)\leqslant\gamma d(x_*,x_*')}}{\int_{\{y|d(x_*',y)\leqslant \gamma d(x_*,x_*')\}}dy},\label{chi-beta}
\end{equation}
then it can be shown, using only simple triangle inequalities, that it satisfies Assumption~\ref{Ass:H3} for any~$p$ with~$C=\frac{(1+3\gamma)^p+(1+\gamma)^p}{2^{p+1}}$, and therefore it also satisfies Assumption~\ref{Ass:contraction} with~$\beta=9\gamma^2+8\gamma$ ($\beta<1$ is then equivalent to~$\gamma<\frac19$). In the case where the manifold~$\mathcal M$ is the sphere, Assumption~\ref{Ass:H2} is also satisfied, thanks to the symmetries of the model. %It can also be proven that it satisfies the assumptions of proposition~\eqref{prop-existence-uniqueness}, and therefore we have existence and uniqueness of solutions of~\eqref{eq-kinetic-model}. 
\item If we define a ``noisy interaction'' kernel as in~\eqref{K-noisy2}, with
\begin{equation*}
\chi_{BDG}(x,y,x_*)=\frac{\mathbbm{1}_{d(x,y)\leqslant\gamma d(x,x_*)}}{\int_{\{y|d(x,y)\leqslant \gamma d(x,x_*)\}}dy},
\end{equation*}
it is also possible to show that it satisfies Assumption~\ref{Ass:midpoint_prop}, and therefore Assumption~\ref{Ass:contraction}.
 \end{itemize}

\end{exa}

% Finally, before stating the main result, let us give a simple example which satisfies these three properties (notice that these three properties are directly satisfied for a midpoint model as in Definition~\ref{def-midpoint-M}, with~$\beta=0$,~$\kappa_0=r_{\mathcal M}$ and~$C=2^{-p}$ for any~$p>2$). If we define a ``noisy interaction'' kernel as in~\eqref{K-noisy}, with
% \begin{equation}
% \chi(x_*,x_*',y)=\frac{\mathbbm{1}_{d(x_*',y)\leqslant\gamma d(x_*,x_*')}}{\int_{\{y|d(x_*',y)\leqslant \gamma d(x_*,x_*')\}}dy},\label{chi-beta}
% \end{equation}
% then it can be shown, using only simple triangle inequalities, that it satisfies property~(H3) for any~$p$ with~$C=\frac{(1+3\gamma)^p+(1+\gamma)^p}{2^{p+1}}$, and therefore it satisfies~(H1) with~$\beta=9\gamma^2+8\gamma$ (so~$\beta<1$ is equivalent to~$\gamma<\frac19$). In the case where the manifold~$\mathcal M$ is the sphere, the symmetries of the model gives that property~(H2) is also satisfied. It can also be proven that it satisfies the assumptions of proposition~\eqref{prop-existence-uniqueness}, and therefore we have existence and uniqueness of solutions of~\eqref{eq-kinetic-model}. 

\subsection{Local stability of Dirac masses on manifolds, for contracting collision kernels}

The main result concerns the nonlinear stability of Dirac masses, provided the collision kernel~$K$ satisfies the conditions described in the previous subsection. To study this class of models, we will also need to slightly strengthen the assumption we made on the manifold~$\mathcal M$: 
\begin{ass}[Position set~$\mathcal M$]\label{Ass-M2}
$\mathcal M$ is a complete connected Riemannian manifold such that
\begin{itemize}
 \item~$\mathcal M$ has a positive injectivity radius~$r_{\mathcal M}>0$,
\item~$\mathcal M$ has a sectional curvature bounded from above and from below by positive constants.
\end{itemize}
\end{ass}

The main result of this section is then the following:
\begin{theo}
\label{theo-stability-dirac}
Let the position set~$\mathcal M$ satisfy Assumption~\ref{Ass-M2}, and the collision kernel~$K$ satisfy Assumptions~\ref{Ass:contraction},~\ref{Ass:H2} and~\ref{Ass:H3}. There exists~$C_1>0$ and~$\eta>0$ such that for any solution~$\rho$ of~\eqref{eq-kinetic-model} in~$C(\mathbb{R}_+,\mathcal P_2(\mathcal M))$ with initial condition~$\rho_0$ satisfying~$W_2(\rho_0,\delta_{x_0})< \eta$ for some~$x_0\in\mathcal M$, there exists~$x_\infty\in \mathcal M$ such that
\[W_2(\rho_t,\delta_{x_\infty})\leqslant C_1  W_2(\rho_0,\delta_{x_0}) \, e^{-\frac14(1-\beta)t}.\]
\end{theo}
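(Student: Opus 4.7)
The plan is to mirror the three-layer architecture of the proof of Theorem~\ref{theo-stability-dirac-midpoint-M}: derive a global upper bound on $\alpha(x_*,x_*',y)$ from~\eqref{def-alpha} (analog of Lemma~\ref{lem:E-particles-global}), derive a sharper local bound valid when the three points sit in a small ball (analog of Lemma~\ref{lem:E-particles-local-M}), plug both into the splitting argument of Proposition~\ref{prop-decay-energy} to get exponential decay of $E(\rho_t)$ with rate $\tfrac{1}{2}(1-\beta)$, and finally replay the Cauchy argument of Subsection~\ref{subsec-local-stability} to extract $x_\infty$.

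First I would prove the global estimate. Applying $d(x,y)^2\leqslant d(x,x_*)^2+2d(x,x_*)d(x_*,y)+d(x_*,y)^2$, integrating against $K(\cdot,x_*,x_*')$, controlling $\int d(x,x_*)^2K$ via Assumption~\ref{Ass:contraction} and $\int d(x,x_*)K$ via Cauchy--Schwarz, and combining with the converse triangle inequality $-\tfrac12 d(x_*',y)^2\leqslant -\tfrac12 d(x_*,x_*')^2+d(x_*,x_*')d(x_*,y)-\tfrac12 d(x_*,y)^2$ yields
\begin{equation*}
\alpha(x_*,x_*',y)\leqslant -\tfrac{1-\beta}{4}d(x_*,x_*')^2+\bigl(1+\sqrt{1+\beta}\bigr)\,d(x_*,x_*')\min\bigl(d(x_*,y),d(x_*',y)\bigr),
\end{equation*}
where the $\min$ arises from the $(x_*,x_*')$-symmetry of $K$. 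This is the correct generalization, with the contraction factor $1-\beta$ in the right place.

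The main obstacle, and the heart of the proof, is the local estimate. When $\max(d(x_*,y),d(x_*',y),d(x_*,x_*'))\leqslant\kappa$ with $\kappa<\kappa_0$ so that Assumption~\ref{Ass:H2} applies, the midpoint $x_m$ is uniquely defined; I split $K=K|_B+K|_{B^c}$ where $B$ is the ball of radius $\kappa_0$ around $x_m$. On $B$, Assumption~\ref{Ass:H2} lets me symmetrize $\int_B d(x,y)^2K=\tfrac12\int_B[d(x,y)^2+d(S_{x_m}x,y)^2]K$, and a two-sided Rauch--Toponogov comparison (available thanks to the two-sided sectional curvature bound of Assumption~\ref{Ass-M2}) gives a curvature-corrected parallel-axis inequality of the form
\begin{equation*}
d(x,y)^2+d(S_{x_m}x,y)^2\leqslant 2d(x_m,y)^2+2d(x,x_m)^2+C\bigl[\kappa^2 d(x,x_m)^2+d(x,x_m)^4\bigr].
\end{equation*}
Applying the same symmetrization with $y$ replaced by $x_*$ converts Assumption~\ref{Ass:contraction} into the approximate midpoint bound $\int_B d(x,x_m)^2K\leqslant \tfrac{\beta}{4}d(x_*,x_*')^2+O(\kappa^{2\delta})d(x_*,x_*')^2$ for some $\delta>0$. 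The error term absorbs both the curvature correction and the $B^c$-mass, the latter controlled via Assumption~\ref{Ass:H3} together with Markov/Hölder, which gives $K(B^c)\leqslant C(d(x_*,x_*')/\kappa_0)^p$ and hence $\int_{B^c}d(x,y)^2K\leqslant C\kappa^{2\delta}d(x_*,x_*')^2$. Combining with the pure-midpoint manifold estimate of Lemma~\ref{lem-apollonius-estimate-M-positive}, i.e.\ $d(x_m,y)^2+\tfrac14 d(x_*,x_*')^2-\tfrac12(d(x_*,y)^2+d(x_*',y)^2)\leqslant C\mathcal{K}_{\max}\kappa^2 d(x_*,x_*')^2$, I obtain
\begin{equation*}
\alpha(x_*,x_*',y)\leqslant -\tfrac{1-\beta}{4}d(x_*,x_*')^2+C\kappa^{2\delta}d(x_*,x_*')^2.
\end{equation*}

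Once the two estimates are available, the rest is mechanical. Splitting the triple integral in~\eqref{dEdt} according to whether $x_*,x_*',y$ all lie in $\bar\omega=\{d(\cdot,\bar x)\leqslant\kappa/2\}$ (with $\bar x$ from the version of Lemma~\ref{lem-cheb-sphere} on $\mathcal P_2(\mathcal M)$), applying the local bound when they do and the global bound otherwise, and optimizing $\kappa$ as a small power of $E(\rho)$ yields $\tfrac12\tfrac{d}{dt}E(\rho)\leqslant -\tfrac{1-\beta}{4}E(\rho)+C_0 E(\rho)^{1+\gamma}$, hence the exponential decay $E(\rho_t)\leqslant C E(\rho_0)e^{-\frac{1-\beta}{2}t}$ for small initial energy. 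The Cauchy-sequence argument of Subsection~\ref{subsec-local-stability} then produces $x_\infty=\lim_n\bar x(n)$ with $d(\bar x(t),x_\infty)^2\leqslant CE_0 e^{-\frac{1-\beta}{2}t}$, and the triangle inequality for $W_2$ combined with~\eqref{E-4W2} delivers the desired $W_2(\rho_t,\delta_{x_\infty})\leqslant C_1 W_2(\rho_0,\delta_{x_0})e^{-\frac{1-\beta}{4}t}$. Essentially all the novelty is concentrated in the local estimate, where the four hypotheses (contraction, midpoint symmetry, higher moment, two-sided curvature) must be used simultaneously; this is the step I expect to be the main source of technical difficulty.
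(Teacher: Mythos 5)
Your plan reproduces the paper's three-layer architecture (global estimate on $\alpha$, local estimate, energy contraction and Cauchy argument) and your global estimate is identical to the paper's Lemma~\ref{lem-global-beta}. The decomposition of the proof is the same. However, your local estimate is organised differently from the paper's Lemma~\ref{lem-local-beta}, and the difference is not merely cosmetic: you split $K$ at a \emph{fixed} ball $B$ of radius $\kappa_0$ around $x_m$, whereas the paper splits at a ball $B_\varepsilon$ of radius $\kappa^\varepsilon$ with $\varepsilon = 1-\tfrac2p$. The price of your choice is that the comparison-Apollonius estimates you invoke must come with the \emph{sharper} error form $C\bigl[d(x_m,y)^2 d(x,x_m)^2 + d(x,x_m)^4\bigr]$ (so that the shortness of the median $d(x_m,y)\leqslant\tfrac32\kappa$, rather than of the whole triangle, controls the quadratic error), because with the radius-$\kappa_0$ ball the triangle $(y,x,S_{x_m}x)$ has side lengths up to $\sim\kappa_0$, and the coarse error $C\,(\mathrm{diam})^2\,a^2 = C\kappa_0^2 a^2$ of Lemmas~\ref{lem-apollonius-estimate-M-positive} and~\ref{lem-apollonius-estimate-M-negative} \emph{as stated in the paper} is not small. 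The sharper form is indeed available — in the constant-curvature computation the error is $\tfrac13 a^2 m^2 + \tfrac1{12}a^4 + \cdots$, where $a$ is the half-side and $m$ the median, and this survives Rauch/Toponogov since those comparisons fix $a$, $m$ and the angle at $x_m$ — but it is an extra lemma you would have to prove, not a consequence of the lemmas already at hand. The paper's $\kappa^\varepsilon$ trick lets it reuse the coarser Lemmas~\ref{lem-apollonius-estimate-M-positive} and~\ref{lem-apollonius-estimate-M-negative} verbatim and trade the $a^4$ issue for a more careful Markov estimate of the $B_\varepsilon^c$ mass, with the optimisation $\varepsilon=1-\tfrac2p$ giving exponent $2-\tfrac4p$; your route, once the sharper comparison inequality is in place, gives $\delta=\min\bigl(\tfrac{p-2}{2},1\bigr)$ via the same moment interpolation, which is comparable and in fact slightly better for large $p$. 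A minor further point: for the step converting Assumption~\ref{Ass:contraction} into a bound on $\int_B d(x,x_m)^2 K$, the paper does not use the midpoint symmetry (Assumption~\ref{Ass:H2}) at all — it applies the Rauch-Apollonius directly to the triangle $(x,x_*,x_*')$ and uses only the $(x_*,x_*')$-symmetry of $K$, which holds by definition; your ``symmetrization with $y$ replaced by $x_*$'' makes Assumption~\ref{Ass:H2} do double duty, which works but is not the most economical choice. In short: your outline is correct and can be made to work, but where you write ``a two-sided Rauch--Toponogov comparison gives'' the parallel-axis inequality with error $C[\kappa^2 a^2 + a^4]$, this is a genuinely new estimate you would need to prove, not a reference to the paper's lemmas.
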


Notice that the convergence rate is here lower than in the previous midpoint model. This is the price to pay for the lower contractivity of the collision model.

%\begin{proof}[Proof of Theorem~\ref{theo-stability-dirac}]
  Just as in Section~\ref{section:manifold}, the proof of Theorem~\ref{theo-stability-dirac} is very similar to the proof of Theorem~\ref{theo-stability-dirac-midpoint-M}, and we will only emphasize the changes. The only thing to do is to obtain the same kind of estimates for the quantity~$\alpha$ (given by~\eqref{def-alpha}). The global estimate given by Lemma~\ref{lem:E-particles-global} in the case of the midpoint model on the sphere is simply based on triangle inequalities. Those arguments combined to Assumption~\ref{Ass:contraction} allow us to obtain a similar global estimate here, although with different constants:%, the only differences with Lemma~\ref{lem:E-particles-global} being the constants.
\begin{lem}\label{lem-global-beta}
Let~$K$ satisfy Assumption~\ref{Ass:contraction}. For any~$x_*,\,x_*',\,y\in \mathcal M$, we have 
\begin{equation}
\alpha(x_*,x_*',y)\leqslant-\frac{1-\beta}4\,d(x_*,x_*')^2 + (1+\sqrt{1+\beta})\,d(x_*,x_*') \min\big(d(x_*,y),\,d(x_*',y)\big).\label{estimate-d1-beta}
\end{equation}
\end{lem}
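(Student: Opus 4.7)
The plan is to mimic the proof of Lemma~\ref{lem:E-particles-global}, replacing the midpoint identity $d(x,x_*)=\frac12 d(x_*,x_*')$ (which is no longer available) with the quadratic bound of Assumption~\ref{Ass:contraction} and a Cauchy--Schwarz step to handle the resulting linear moment.

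First I would apply the triangle inequality in $(x_*,x,y)$ to get $d(x,y)^2\leqslant d(x_*,y)^2+2\,d(x_*,y)\,d(x,x_*)+d(x,x_*)^2$, then integrate against the probability measure $K(\cdot,x_*,x_*')$. The quadratic term is controlled directly by Assumption~\ref{Ass:contraction}, giving a contribution bounded by $\frac{1+\beta}{4}\,d(x_*,x_*')^2$. The linear term requires one extra step: by Cauchy--Schwarz and Assumption~\ref{Ass:contraction},
\[
\int_{\mathcal M} d(x,x_*)\,K(x,x_*,x_*')\,dx\leqslant\left(\int_{\mathcal M} d(x,x_*)^2\,K(x,x_*,x_*')\,dx\right)^{1/2}\leqslant\tfrac{\sqrt{1+\beta}}{2}\,d(x_*,x_*').
\]
This yields $\int d(x,y)^2\,K\,dx\leqslant d(x_*,y)^2+\sqrt{1+\beta}\,d(x_*,y)\,d(x_*,x_*')+\frac{1+\beta}{4}\,d(x_*,x_*')^2$.

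Next I would use the reverse triangle inequality in $(x_*,x_*',y)$, namely $d(x_*',y)^2\geqslant d(x_*,y)^2-2\,d(x_*,y)\,d(x_*,x_*')+d(x_*,x_*')^2$, to bound $-\frac12 d(x_*',y)^2$ from above. Substituting both estimates into the definition~\eqref{def-alpha} of $\alpha$, the $d(x_*,y)^2$ terms cancel and one obtains
\[
\alpha(x_*,x_*',y)\leqslant -\frac{1-\beta}{4}\,d(x_*,x_*')^2+(1+\sqrt{1+\beta})\,d(x_*,x_*')\,d(x_*,y).
\]

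Finally, because $K(\cdot,x_*,x_*')=K(\cdot,x_*',x_*)$, the roles of $x_*$ and $x_*'$ are symmetric in the definition of $\alpha$, so the same computation with $x_*$ and $x_*'$ interchanged yields the analogous bound with $d(x_*',y)$ in place of $d(x_*,y)$. Taking the better of the two estimates produces the minimum on the right-hand side of~\eqref{estimate-d1-beta}. There is no real obstacle here; the only subtle point compared to the midpoint case of Lemma~\ref{lem:E-particles-global} is the Cauchy--Schwarz step needed to convert the second-moment bound of Assumption~\ref{Ass:contraction} into a first-moment bound, which is what accounts for the factor $\sqrt{1+\beta}$ rather than $\sqrt{1}=1$ appearing in the linear coefficient.
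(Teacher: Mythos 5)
Your proof is correct and follows essentially the same route as the paper: triangle inequality in $(x_*,x,y)$, integration against $K(\cdot,x_*,x_*')$ using Assumption~\ref{Ass:contraction} for the quadratic term and Cauchy--Schwarz plus Assumption~\ref{Ass:contraction} for the linear term, then the reverse triangle inequality in $(x_*,x_*',y)$, and finally the $x_*\leftrightarrow x_*'$ symmetry to obtain the minimum. The arithmetic checks out and the identified coefficient $1+\sqrt{1+\beta}$ is exactly as in the paper.
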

\begin{proof}[Proof of Lemma~\ref{lem-global-beta}]
The triangle inequality in the triangle~$(x_*,x,y)$ gives
\[d(x,y)^2\leqslant d(x_*,y)^2+2\,d(x_*,y)\,d(x,x_*)+d(x,x_*)^2,\]
which, after integrating against the probability measure~$K(\cdot,x_*,x_*')$ and using the Cauchy-Schwarz inequality and Assumption~\ref{Ass:contraction}, gives
\[\int_{\mathcal M} d(x,y)^2 K(x,x_*,x_*')\,dx\leqslant d(x_*,y)^2+\sqrt{1+\beta}\,d(x_*,y)\,d(x_*,x_*')+\tfrac14(1+\beta)\,d(x_*,x_*')^2.\]
We now write~$d(x_*',y)^2\geqslant d(x_*,y)^2-2\,d(x_*,y)\,d(x_*',x_*)+d(x_*',x_*)^2$ and insert these two estimates in the expression~\eqref{def-alpha} of~$\alpha$. 
We obtain
\[\alpha(x_*,x_*',y)\leqslant-\frac{1-\beta}4\,d(x_*,x_*')^2 +(1+\sqrt{1+\beta})\,d(x_*,y)\,d(x_*',x_*).\]
Since~$x_*$ and~$x_*'$ play symmetric roles, this ends the proof of the Lemma.
\end{proof}

%To get the local estimate corresponding to Lemma~\ref{lem:E-particles-local}, we will first need a technical lemma, which is a counterpart of Lemma~\ref{lem-apollonius-estimate-M-positive} when we now have a lower bound on the sectional curvature.

We can also establish a local estimate, similar to Lemma~\ref{lem:E-particles-local}.

\begin{lem}\label{lem-local-beta}
Let the position set~$\mathcal M$ satisfy Assumption~\ref{Ass-M2}, and the kernel~$K$ satisfy Assumptions~\ref{Ass:contraction},~\ref{Ass:H2} and~\ref{Ass:H3}.
There exists~$\kappa_2\leqslant\kappa_0$ and~$C_p>0$ such that for any~$x_*,\,x_*',\,y\in \mathcal M$ such that~$\max\left(d(x_*,y),\,d(x_*',y),\,d(x_*,x_*')\right)\leqslant \kappa$, we have
\begin{equation}
\alpha(x_*,x_*',y) \leqslant -\frac{1-\beta}4\,d(x_*,x_*')^2 + C_p\,\kappa^{2(1-\frac2p)}\,d(x_*,x_*')^2,\label{estimate-d2-beta}
\end{equation}
where~$p$ and~$\kappa_0$ are given by Assumption~\ref{Ass:H2} and Assumption~\ref{Ass:H3}.
\end{lem}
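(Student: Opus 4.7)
The plan is to decompose the quantity to estimate into two pieces by inserting the midpoint $x_m$ of the (unique, since $\kappa \leq \kappa_0 \leq r_{\mathcal M}$) minimal geodesic joining $x_*$ and $x_*'$:
\[
\alpha(x_*,x_*',y) = \Bigl[\int_{\mathcal M}d(x,y)^2 K(x,x_*,x_*')\,dx - d(x_m,y)^2\Bigr] + \Bigl[d(x_m,y)^2 - \tfrac{d(x_*,y)^2+d(x_*',y)^2}{2}\Bigr].
\]
The second bracket is controlled immediately by applying Lemma~\ref{lem-apollonius-estimate-M-positive} to the triangle $(x_*,x_*',y)$ (whose sides are all $\leqslant\kappa$), giving $\leqslant -\tfrac14 d(x_*,x_*')^2 + C\mathcal{K}_{\max}\kappa^2 d(x_*,x_*')^2$. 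So the whole task reduces to showing that the first bracket is $\leqslant \tfrac{\beta}{4}d(x_*,x_*')^2 + C_p\kappa^{2(1-2/p)}d(x_*,x_*')^2$.

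For the first bracket, we would exploit Assumption~\ref{Ass:H2} by splitting at a scale $\kappa^\theta$ with $\theta = 1 - 2/p$. Set $A_{in}=\{x : d(x,x_m)\leqslant \kappa^\theta\}$, chosen so that $A_{in}\subset B$ whenever $\kappa$ is small enough (i.e.\ $\kappa_2\leqslant \kappa_0^{1/\theta}$). On $A_{in}$ the map $S_{x_m}$ is well-defined and the symmetry of $K$ gives $\int_{A_{in}} d(x,y)^2 K\,dx = \int_{A_{in}} \tfrac{d(x,y)^2 + d(S_{x_m}(x),y)^2}{2} K\,dx$. For each $x\in A_{in}$, apply a \emph{two-sided} version of the Apollonius estimate to the triangle $(x,S_{x_m}(x),y)$ — its midpoint on the side $(x,S_{x_m}(x))$ is exactly $x_m$, and all three sides are bounded by $2\kappa^\theta$. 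The upper bound of Lemma~\ref{lem-apollonius-estimate-M-positive} already provides one direction; the other direction (an upper bound on $\tfrac{b^2+b'^2}{2}$ in terms of $m^2+a^2$) is obtained by the very same Rauch/Toponogov comparison argument, but invoking the lower sectional-curvature bound provided by Assumption~\ref{Ass-M2} instead of the upper one. Together, $\bigl|\tfrac{d(x,y)^2+d(S_{x_m}(x),y)^2}{2} - d(x_m,y)^2 - d(x,x_m)^2\bigr|\leqslant C\kappa^{2\theta}d(x,x_m)^2$, hence
\[
\int_{A_{in}}d(x,y)^2 K\,dx \leqslant d(x_m,y)^2 K(A_{in}) + (1+C\kappa^{2\theta})\int_{\mathcal M}d(x,x_m)^2 K\,dx.
\]
The key second-moment estimate $\int_{\mathcal M}d(x,x_m)^2K\,dx \leqslant \tfrac{\beta}{4}d(x_*,x_*')^2 + O(\kappa^2)d(x_*,x_*')^2$ comes from applying Lemma~\ref{lem-apollonius-estimate-M-positive} to the triangles $(x,x_*,x_*')$, integrating, symmetrising the roles of $x_*,x_*'$, and using Assumption~\ref{Ass:contraction}; the tail $\{d(x,x_*)>\kappa_1\}$ which escapes the Apollonius hypothesis is handled by the Markov inequality against Assumption~\ref{Ass:H3}.

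For the complementary piece on $A_{out}$, only Assumption~\ref{Ass:H3} is used. Write $d(x,y)^2 \leqslant 2d(x,x_m)^2 + 2d(x_m,y)^2$, and use Markov: $K(A_{out})\leqslant \int d(x,x_m)^p K / \kappa^{\theta p}\leqslant C\,d(x_*,x_*')^p/\kappa^{\theta p}$, together with $\int_{A_{out}}d(x,x_m)^2 K \leqslant \int d(x,x_m)^p K /\kappa^{\theta(p-2)}\leqslant C d(x_*,x_*')^p/\kappa^{\theta(p-2)}$. The choice $\theta = 1-2/p$ is exactly what balances $\kappa^{2\theta}$ from the symmetry error with the tail error: using $d(x_*,x_*')\leqslant\kappa$ one finds $d(x_*,x_*')^p/\kappa^{\theta(p-2)}\leqslant \kappa^{(p-2)(1-\theta)}d(x_*,x_*')^2 = \kappa^{2(1-2/p)}d(x_*,x_*')^2$. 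Summing the three contributions — the negative $-\tfrac14 d(x_*,x_*')^2$ from the second bracket, the $\tfrac{\beta}{4}d(x_*,x_*')^2$ from $\int d(x,x_m)^2K$, and the $O(\kappa^{2(1-2/p)})d(x_*,x_*')^2$ errors — yields the claimed bound.

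The main obstacle is the need for a two-sided Apollonius-type estimate, which forces the lower curvature bound appearing in Assumption~\ref{Ass-M2}; without it the symmetry argument in $A_{in}$ only produces a one-sided estimate going the wrong way. The second technical point is the correct choice $\theta=1-2/p$: it encodes why the exponent $2(1-2/p)$ is sharp here, and why the midpoint case of Lemma~\ref{lem:E-particles-local-M} (formally $p=\infty$) recovers the exponent $2$.
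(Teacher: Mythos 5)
Your proof is correct and uses essentially the same ingredients as the paper's: the same split into an inner ball $A_{in}$ (radius $\kappa^{\theta}$ with $\theta=1-2/p$) and a tail handled by Markov's inequality against Assumption~\ref{Ass:H3}, the same three applications of the Apollonius-type comparison lemmas (to the triangles $(y,x_*,x_*')$ and $(x,x_*,x_*')$ via Lemma~\ref{lem-apollonius-estimate-M-positive}, and to $(x,S_{x_m}(x),y)$ via Lemma~\ref{lem-apollonius-estimate-M-negative}), the symmetry Assumption~\ref{Ass:H2} on $A_{in}$, and Assumption~\ref{Ass:contraction} for the $\beta$-term; the only cosmetic difference is that you insert $d(x_m,y)^2$ to separate the estimate into two brackets before splitting the integral, whereas the paper works directly with the restricted integral $\alpha_\varepsilon$. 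One small precision: the ``two-sided'' Apollonius bound is not actually needed on the triangle $(x,S_{x_m}(x),y)$ — only the direction coming from the lower curvature bound (i.e.\ Lemma~\ref{lem-apollonius-estimate-M-negative}) enters, since you are bounding $\frac{d(x,y)^2+d(S_{x_m}(x),y)^2}{2}$ from above — and the error in your second-moment estimate for $\int d(x,x_m)^2K\,dx$ should read $O(\kappa^{2\theta})$ rather than $O(\kappa^2)$, which is harmless since $2\theta=2(1-2/p)$ is exactly the bottleneck exponent.
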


To prove this result, we need the following technical estimate:
\begin{lem}\label{lem-apollonius-estimate-M-negative}
Assume that~$\mathcal M$ satisfies Assumption~\ref{Ass-M2}.
%We suppose that the sectional curvature of~$\mathcal M$ is bounded from below by a constant~$\mathcal K_{\min}\leqslant0$. 
We fix~$\kappa_0>0$. There exists~$C_1\geqslant0$ such that for any~$\kappa\leqslant\kappa_0$, if a minimal geodesic triangle with side lengths~$2a,b,b'$ satisfies~$\max(2a,b,b')\leqslant\kappa$, then, denoting by~$m$ the length of the median corresponding to the side of length~$2a$ (as in figure~\ref{fig-apollonius-sphere}), we have
\begin{equation}\label{estimate-apollonius-negative}
m^2+ a^2 -\frac{b^2+b'^2}2 \geqslant C_1\, \mathcal K_{\min} \, \kappa^2 a^2.
\end{equation}
\end{lem}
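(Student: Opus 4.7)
The plan is to follow the same strategy as for Lemma~\ref{lem-apollonius-estimate-M-positive}, but running the geometric comparison in the opposite direction, since we now need a \emph{lower} bound on the median deficit $m^2+a^2-\frac{b^2+b'^2}{2}$. The natural tool is the comparison theorem for sectional curvature bounded from below (Toponogov's hinge theorem), which is the counterpart of the Rauch comparison Theorem~\ref{lem-rauch} used for the upper bound.

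Mimicking the setup of Lemma~\ref{lem-apollonius-estimate-M-positive}, I denote by $z$ the midpoint of the minimal geodesic from $x$ to $x'$ and let $\theta$ be the angle at $z$ in the triangle $(x,y,z)$, so that the angle at $z$ in $(x',y,z)$ is $\pi-\theta$. The two hinges $(a,m,\theta)$ and $(a,m,\pi-\theta)$ at $z$ can then be compared with the corresponding hinges on the simply connected two-dimensional model space $\mathbb{M}_{\mathcal K_{\min}}$ of constant sectional curvature $\mathcal K_{\min}$ (Euclidean plane if $\mathcal K_{\min}=0$, round sphere if $\mathcal K_{\min}>0$, hyperbolic plane if $\mathcal K_{\min}<0$). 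Choosing $\kappa_0$ small enough so that all the geodesics involved are minimal and lie in the common domain of validity of the comparison, Toponogov's theorem yields $b\leqslant\bar b$ and $b'\leqslant\bar b'$, where $\bar b,\bar b'$ are the opposite sides of the comparison triangle $(\bar x,\bar x',\bar y)$ in $\mathbb{M}_{\mathcal K_{\min}}$.

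Since $\frac{b^2+b'^2}{2}\leqslant\frac{\bar b^2+\bar b'^2}{2}$, the claim reduces to the Apollonius-type estimate
\[
m^2+a^2-\frac{\bar b^2+\bar b'^2}{2}\geqslant C_1\,\mathcal K_{\min}\,\kappa^2\,a^2
\]
on the model space. For $\mathcal K_{\min}=0$ this is exactly the classical Apollonius identity (left-hand side zero). For $\mathcal K_{\min}>0$, a rescaling reduces to the unit sphere and the computation carried out inside the proof of Lemma~\ref{lem:E-particles-local} already supplies the required two-sided control. For $\mathcal K_{\min}<0$, rescaling reduces to the hyperbolic plane of curvature $-1$, on which the analogue of identity~\eqref{apollonius-spherical} reads
\[
\tfrac{1}{2}\bigl(\cosh\bar b+\cosh\bar b'\bigr)=\cosh a\,\cosh m.
\]
The plan is to mirror the spherical argument with $g=\arccos^2$ replaced by $h(t)=\operatorname{arccosh}(t)^2$ on $[1,+\infty)$. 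A direct computation gives $h'(\cosh\theta)=\frac{2\theta}{\sinh\theta}$, which is positive and decreasing, so $h$ is increasing and \emph{concave}. Applying Jensen's inequality for the concave $h$, together with a tangent-line bound from above (concave functions lie below their tangents), and using the expansions $\frac{m\cosh m}{\sinh m}=1+O(m^2)$ and $\cosh a-1=\frac{a^2}{2}(1+O(a^2))$, I would obtain $\frac{\bar b^2+\bar b'^2}{2}\leqslant m^2+a^2+C\kappa^2 a^2$, which is exactly the desired lower bound on the deficit.

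The main obstacle will be precisely this hyperbolic Apollonius estimate. The passage from the spherical computation of Lemma~\ref{lem:E-particles-local} to its hyperbolic counterpart is not a purely formal substitution: the convexity/concavity of $\arccos^2$ and $\operatorname{arccosh}^2$ point in opposite directions, so both the Jensen step and the tangent-line bound must be applied in the reverse sense. One must therefore track how these sign flips combine so that the final inequality opens the correct way; this is precisely the origin of the factor $\mathcal K_{\min}$ (negative, when the curvature is negative) on the right-hand side of~\eqref{estimate-apollonius-negative}, in perfect analogy with the role played by $\mathcal K_{\max}$ in Lemma~\ref{lem-apollonius-estimate-M-positive}.
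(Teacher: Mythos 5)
Your proposal matches the paper's proof: both reduce to the model space via Toponogov's hinge comparison applied at the midpoint $z$ and then establish the hyperbolic Apollonius identity $\tfrac12(\cosh\bar b+\cosh\bar b')=\cosh a\cosh m$, exploiting the concavity of $h(t)=\operatorname{arccosh}(t)^2$ (Jensen plus the tangent-line bound, applied in the opposite direction from the spherical case) and Taylor expansions to conclude. The only superfluous part is the discussion of $\mathcal K_{\min}>0$, which the paper sidesteps by noting one may always take $\mathcal K_{\min}\leqslant 0$.
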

\begin{rem}
 This inequality is indeed related to estimate~\eqref{estimate-apollonius-positive}: it is an estimate of the quantity~$m^2+ a^2 -\frac{b^2+b'^2}2$ from below, while~\eqref{estimate-apollonius-positive} is an estimate of the same quantity from above.

Notice that to obtain Lemma~\ref{lem-apollonius-estimate-M-negative}, we only need that the sectional curvature of~$\mathcal M$ is bounded from below by a constant~$\mathcal K_{\min}\leqslant0$.
\end{rem}

To prove Lemma~\ref{lem-apollonius-estimate-M-negative}, we will use the Toponogov theorem (see below), which is similar to the Rauch Theorem (Theorem ~\ref{lem-rauch}), with a reverse inequality.

\begin{theo}[Toponogov Theorem]\label{lem-toponogov}\cite{berger2003panoramic}*{Theorem~73} or~\cite{cheeger1975comparison}*{Theorem~2.2~(B)}.
We suppose that the sectional curvature is bounded below by~$\mathcal K_{\min}\leqslant0$.
We suppose~$(x,y,z)$ is a minimal geodesic triangle on~$\mathcal M$.
Let~$(\bar x,\bar y,\bar z)$ be a minimal geodesic triangle on the hyperbolic plane of constant curvature~$\mathcal K_{\min}$ (or the plane if~$\mathcal K_{\min}=0$) such that~$d(x,y)=d(\bar x,\bar y)$ and~$d(x,z)=d(\bar x,\bar z)$, the angle at~$\bar x$ being the same as the one at~$x$.

Then~$d(y,z)\leqslant d(\bar y,\bar z)$.
\end{theo}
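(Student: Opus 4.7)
The plan is to prove Toponogov's theorem in two stages: an infinitesimal comparison of Jacobi fields via Rauch~II, followed by a globalization by subdivision and gluing in the model space. I would first reduce to the case of a small ``admissible'' triangle, meaning all three sides are strictly minimizing and the triangle lies inside a geodesically convex neighborhood free of cut points. For such a triangle parametrize the side from $x$ to $z$ by arclength as $\gamma_1\colon[0,\ell]\to\mathcal M$ with $\ell=d(x,z)$ and introduce $L(t)=d(\gamma_1(t),y)$. Then $L(0)=d(x,y)$ and, by the first variation formula, $L'(0)=-\cos\theta$ where $\theta$ is the hinge angle at $x$; the analogous model quantities $\bar L(0)$ and $\bar L'(0)$ agree with these by the construction of the comparison triangle.

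Next I would compute $L''(t)$ using the second variation: this expresses $L''$ as an index form of a Jacobi field generated by the radial variation from $\gamma_1(t)$ to $y$. Under the hypothesis $K\geqslant\mathcal K_{\min}$, the Rauch~II comparison theorem bounds such Jacobi fields \emph{from above} by their counterparts in the model space of constant curvature $\mathcal K_{\min}$, which translates into $L''(t)\leqslant\bar L''(t)$ (interpreted in a suitable barrier sense; see below). Integrating this differential inequality twice, using $L(0)=\bar L(0)$ and $L'(0)=\bar L'(0)$, yields $L(\ell)\leqslant\bar L(\ell)$, which is the conclusion of the theorem for small admissible triangles. To globalize, I would subdivide $x=z_0,z_1,\dots,z_N=z$ along $\gamma_1$ into pieces to which the local result applies, producing a sequence of model comparison triangles, and then invoke the \emph{Alexandrov gluing lemma} in the constant-curvature model: two comparison triangles sharing an edge can be straightened into a single triangle that still dominates the broken configuration, provided the sum of hinge angles at the gluing vertex is at most $\pi$. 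Because each $z_j$ lies in the interior of the minimizing geodesic $\gamma_1$, these angles are exactly $\pi$, so the gluing is tight; iterating across $j=1,\dots,N-1$ produces one model triangle with $d(y,z)\leqslant d(\bar y,\bar z)$.

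The main obstacle is regularity: $L$ is only locally Lipschitz in general, since $\gamma_1(t)$ may cross the cut locus of $y$, where $L''$ fails to exist classically. The standard workaround is to interpret the comparison $\tfrac12\,L^2$ via upper barriers: for each $t_0$ one picks a minimizing geodesic from $\gamma_1(t_0)$ to $y$, writes the smooth local barrier defined by its length, applies the second variation and Rauch~II to the barrier, and concludes via a maximum principle of Calabi type. Additionally, the gluing step requires minimality of each subdivided piece, which must be ensured by choosing subdivisions finer than the injectivity radius along $\gamma_1$, and the Alexandrov lemma itself requires a careful case distinction in the model space when the total ``opening'' exceeds $\pi$. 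These technicalities are precisely why the full argument occupies substantial portions of \cite{cheeger1975comparison} and \cite{berger2003panoramic}, and I would follow the structure of those treatments rather than attempt a shortcut.
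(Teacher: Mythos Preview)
The paper does not prove this statement at all: Theorem~\ref{lem-toponogov} is simply quoted from the literature, with explicit references to Berger (Theorem~73) and Cheeger--Ebin (Theorem~2.2~(B)), and is then used as a black box in the proof of Lemma~\ref{lem-apollonius-estimate-M-negative}. There is therefore no ``paper's own proof'' to compare against.

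That said, your outline is a reasonable sketch of the classical argument found in those references: local comparison via second variation and a Rauch-type bound, followed by globalization through subdivision and the Alexandrov gluing lemma. You correctly identify the main technical hazards (cut-locus regularity of $L$, the barrier formulation, and the angle condition in the gluing step). For the purposes of this paper, however, all of this is unnecessary: the intended ``proof'' here is simply the citation, and the subsequent application only requires the statement for small triangles contained in a geodesic ball of radius less than the injectivity radius, where the globalization step is not even needed.
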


\begin{proof}[Proof of Lemma~\ref{lem-apollonius-estimate-M-negative}]

We then start with the case of the hyperbolic plane with constant curvature~$-1$, and as before we will obtain the general case by scaling and comparison.

There is an equivalent of Apollonius formula in hyperbolic geometry (similarly to~\eqref{apollonius-spherical}, which holds in the spherical geometry). Indeed, the hyperbolic trigonometry formula (see for instance~\cite{berger2003panoramic}*{eq.~$(4.21)$}), in the configuration of Figure~\ref{fig-rauch-M-sphere} gives~$\cosh b=\cosh a \cosh m - \cos \theta\sinh b \sinh m$, and then we get
\begin{equation}
\frac{\cosh b + \cosh b'}2=\cosh a \cosh m.\label{apollonius-hyperbolic}
\end{equation} 
The same argument as in the proof of Lemma~\ref{lem-apollonius-estimate-M-positive}, with the function~$h:t\mapsto\mathrm{arcosh}(t)^2$ (which is now concave) instead of~$g$ gives that
\begin{align*}
\frac{b^2+b'^2}2 &\leqslant m^2 + (\cosh a-1)\,\frac{2\,m \cosh m}{\sinh m}\\
&\leqslant m^2 + a^2(1+O(a^2))(1+O(m^2)),
\end{align*}
which gives the estimate~\eqref{estimate-apollonius-negative} since~$m\leqslant a+b$. The same scaling argument provides the estimate for the hyperbolic plane of constant curvature~$\mathcal K_{\min}<0$. Again, the case of~$\mathcal K_{\min}=0$ corresponds to Apollonius theorem on the plane. Finally, the same comparison arguments works when we use the Toponogov comparison theorem, i.e. Theorem~\ref{lem-toponogov} (applied to the same exact configuration as in Figure~\ref{fig-rauch-M-sphere}, except that the sphere is replaced by the hyperbolic plane), and this ends the proof of Lemma~\ref{lem-apollonius-estimate-M-negative}.
\end{proof}

We can now prove the main local estimate of the quantity~$\alpha$.
\begin{proof}[Proof of Lemma~\ref{lem-local-beta}]
The proof relies mainly in two applications of Lemma~\ref{lem-apollonius-estimate-M-positive}, one application of Lemma~\ref{lem-apollonius-estimate-M-negative}, and some Markov inequalities using the control provided by Assumption~\ref{Ass:H3} on higher moments.

We fix~$0<\varepsilon<1$ and~$\kappa_2\leqslant\kappa_0$ such that~$(\kappa_2)^{\varepsilon}\leqslant\kappa_0$. We now take~$\kappa\leqslant\kappa_2$ and~$x_*$,~$x_*'$, and~$y$ in~$\mathcal M$ such that~$\max\left(d(x_*,y),\,d(x_*',y),\,d(x_*,x_*')\right)\leqslant \kappa$. All along this proof, we will denote by~$C$ a generic constant which does not depend on~$\kappa$. We then define, as Assumption~\ref{Ass:H2}, the midpoint~$x_m$ of the (unique) minimal geodesic arc joining~$x_*$ and~$x_*'$, and the geodesic ball~$B_\varepsilon$ of center~$x_m$ and radius~$\kappa^\varepsilon$ (which is less than~$\kappa_0$). We recall the expression~\eqref{def-alpha} :
\[\alpha(x_*,x_*',y)=\int_{\mathcal M}\left[d(x,y)^2-\frac{d(x_*,y)^2+d(x_*',y)^2}2\right] K(x,x_*,x_*') \, dx.\] 

We will split the integral depending whether~$x$ is in~$B_\varepsilon$ or not. 
We first estimate the integral outside~$B_\varepsilon$, using triangle inequalities and Markov inequality. Indeed, if~$d(x_*,y)\leqslant\kappa$,~$d(x_*,x_m)\leqslant\frac12\kappa$ and~$d(x,x_m)\geqslant\kappa^\varepsilon$, provided we choose~$\kappa_2$ sufficiently small, we have~$d(x,y)\leqslant Cd(x,x_*)$ and~$d(x,x_*)\geqslant C \kappa^\varepsilon$. So if we write
\begin{equation}
\alpha_\varepsilon(x_*,x_*',y)=\int_{B_\varepsilon}\left[d(x,y)^2-\frac{d(x_*,y)^2+d(x_*',y)^2}2\right] K(x,x_*,x_*') \, dx,\label{def-alpha-epsilon}
\end{equation}
we get
\begin{align*}
\alpha(x_*,x_*',y)-\alpha_\varepsilon(x_*,x_*',y)&=\int_{B_\varepsilon^c}\left[d(x,y)^2-\frac{d(x_*,y)^2+d(x_*',y)^2}2\right] K(x,x_*,x_*') \, dx\\
&\leqslant C\int_{\mathcal M\setminus B_\varepsilon} d(x,x_*)^2 K(x,x_*,x_*') \, dx\\
&\leqslant C (\kappa^{-\varepsilon})^{p-2}\int_{\mathcal M\setminus B_\varepsilon} d(x,x_*)^p K(x,x_*,x_*') \, dx\\
&\leqslant C \kappa^{(2-p)\varepsilon}\,d(x_*,x_*')^p,
\end{align*}
thanks to Assumption~\ref{Ass:H3}. Since~$d(x_*,x_*')\leqslant\kappa$, we obtain
\begin{equation}
\alpha(x_*,x_*',y)-\alpha_\varepsilon(x_*,x_*',y)\leqslant C\kappa^{(p-2)(1-\varepsilon)}\,d(x_*,x_*')^2.\label{est-outside-B}
\end{equation}

We now estimate~$\alpha_\varepsilon(x_*,x_*',y)$ given by~\eqref{def-alpha-epsilon}. Thanks to Assumption~\ref{Ass:H2}, we get
\begin{equation}
\alpha_\varepsilon(x_*,x_*',y)=\int_{B_\varepsilon}\left[\frac{d(x,y)^2+d(S_{x_m}(x),y)^2}2-\frac{d(x_*,y)^2+d(x_*',y)^2}2\right] K(x,x_*,x_*') \, dx.\label{alpha-epsilon}
\end{equation}
For~$x$ in~$B_\varepsilon$, we write~$x'=S_{x_m}(x)$. We denote by~$b$ (resp.~$b'$,~$b_*$,~$b_*'$) the distance between~$y$ and~$x$ (resp.~$x'$,~$x_*$,~$x_*'$), and we write~$a=d(x_m,x)=d(x_m,x')$,~$a*=d(x_m,x_*)=d(x_m,x_*')$, and~$m=d(x_m,y)$. Finally we write~$c=d(x,x_*)$ and~$c'=d(x,x_*')$. All these notations are depicted in Figure~\ref{fig-proof-apollo3} below.
\begin{figure}[h]
\centering
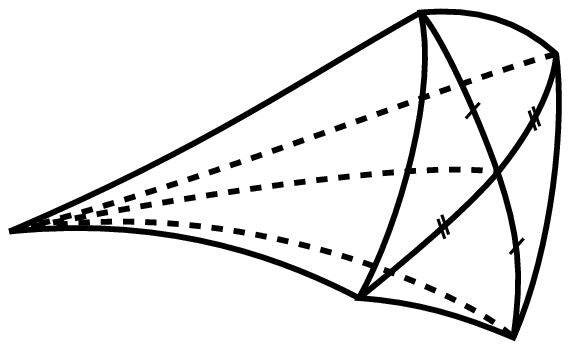
\caption{Configurations as in Lemma~\ref{lem-apollonius-estimate-M-positive} and Lemma~\ref{lem-apollonius-estimate-M-negative} for different triangles.}
\label{fig-proof-apollo3}
\end{figure}

To simplify notation, we consider~$a$,~$b$,~$b'$,~$c$ and~$c'$ as functions of~$x\in B_\varepsilon$, and we will not write explicitly this dependence when no confusion is possible.
We first apply Lemma~\ref{lem-apollonius-estimate-M-positive} in the triangle~$(x,x_*,x_*')$. We get
\[a^2+a_*^2\leqslant\frac{c^2+c'^2}2+C\kappa^{2\varepsilon}\,a_*^2.\]
Integrating with respect to~$K(\cdot,x_*,x_*')$ on~$B_\varepsilon$, we get
\begin{align}
\int_{B_\varepsilon}a^2 K(x,x_*,x_*') \, dx &\leqslant \int_{\mathcal M}\left[\frac{c^2+c'^2}2+C\kappa^{2\varepsilon}\,a_*^2\right]K(x,x_*,x_*') \, dx-\int_{B_\varepsilon}a_*^2 K(x,x_*,x_*') \, dx\nonumber\\
&\leqslant\left(1+\beta+C\kappa^{2\varepsilon}-\int_{B_\varepsilon}K(x,x_*,x_*') \, dx\right)a_*^2,\label{est-a2-K-B}
\end{align}
thanks to Assumption~\ref{Ass:contraction} (which can be rewritten~$\int_{\mathcal M}c^2\,K(x,x_*,x_*') \, dx\leqslant(1+\beta)a_*^2$ with these notations).
Using once again Assumption~\ref{Ass:H3} and the Markov inequality, we get
\begin{align*}
\int_{\mathcal M\setminus B_\varepsilon}K(x,x_*,x_*') \, dx&\leqslant C(\kappa^{-\varepsilon})^p\int_{\mathcal M\setminus B_\varepsilon}d(x,x_*)^p\,K(x,x_*,x_*') \, dx\\
&\leqslant C \kappa^{-p\varepsilon}\, d(x_*,x_*')^p \leqslant C \kappa^{p(1-\varepsilon)},
\end{align*}
and therefore we obtain
\begin{equation}
1-\int_{B_\varepsilon}K(x,x_*,x_*') \, dx\leqslant C \kappa^{p(1-\varepsilon)}.\label{est-K-B}
\end{equation}
Hence, inserting this estimate into~\eqref{est-a2-K-B}, we finally get
\begin{equation}
\int_{B_\varepsilon}a^2 K(x,x_*,x_*') \, dx \leqslant[\,\beta+C(\kappa^{2\varepsilon}+\kappa^{p(1-\varepsilon)})]a_*^2.\label{est-int-a2}
\end{equation}

We now apply Lemma~\ref{lem-apollonius-estimate-M-positive} in the triangle~$(y,x_*,x_*')$, and Lemma~\ref{lem-apollonius-estimate-M-negative} in the triangle~$(y,x,x')$. We get
\begin{gather*}
m^2+a_*^2-\frac{b_*^2+b_*'^2}2\leqslant C \kappa^2 a_*^2\\
m^2+a^2-\frac{b^2+b'^2}2\geqslant-C \kappa^{2\varepsilon} a^2,
\end{gather*}
and therefore we obtain
\[\frac{b^2+b'^2}2-\frac{b_*^2+b_*'^2}2\leqslant(-1+C \kappa^2) a_*^2 + (1+C \kappa^{2\varepsilon}) a^2.\]
Integrating over~$B_\varepsilon$ gives (we recall~\eqref{alpha-epsilon})
\begin{align*}\alpha_\varepsilon(x_*,x_*',y)&\leqslant(-1 + C \kappa^2)a_*^2\int_{B_\varepsilon}K(x,x_*,x_*') \, dx + (1+C \kappa^{2\varepsilon})\int_{B_\varepsilon}a^2K(x,x_*,x_*') \, dx\\
&\leqslant(-1+C\kappa^{p(1-\varepsilon)}+C \kappa^2)a_*^2+(1+C \kappa^{2\varepsilon})[\,\beta+C(\kappa^{2\varepsilon}+\kappa^{p(1-\varepsilon)})]a_*^2,
\end{align*}
Thanks to~\eqref{est-K-B} and~\eqref{est-int-a2}. We finally get
\[\alpha_\varepsilon(x_*,x_*',y)\leqslant(-1+\beta)a_*^2 + C(\kappa^{p(1-\varepsilon)}+\kappa^{2\varepsilon})a_*^2, \]
since~$\varepsilon<1$. Combining this estimate with~\eqref{est-outside-B}, we obtain
\[\alpha(x_*,x_*',y)\leqslant(-1+\beta)a_*^2 + C(\kappa^{(p-2)(1-\varepsilon)}+\kappa^{2\varepsilon})a_*^2, \]
and we see that the best initial choice for~$\varepsilon$ is~$\varepsilon=1-\frac2p$, which gives exactly the estimate~\eqref{estimate-d2-beta}, since~$a_*=\frac12d(x_*,x_*')$.
\end{proof}

The rest of the proof of Theorem~\ref{theo-stability-dirac} is exactly the same as the end of the proof of Theorem~\ref{theo-stability-dirac-midpoint-M}, the different rate of convergence (here~$\frac{1-\beta}4$ instead of~$\frac14$) appearing directly in the local and global estimates provided by Lemmas~\ref{lem-global-beta} and~\ref{lem-local-beta}. 
%\end{proof}

\section*{Acknowledgements}
The first author is on leave from CNRS, Institut de Mathématiques de Toulouse, UMR 5219, Toulouse. 
He acknowledges support from the ANR under grant MOTIMO (ANR-11-MONU-009-01) and from the National Science Foundation under grant KI-net  \#11-07444 (University of Maryland). 
The first and third author acknowledge support from the CNRS--Royal Society exchange project CODYN. 
The second and third author acknowledge support from the ``ANR blanche'' project Kibord: ANR-13-BS01-0004. The third author acknowledges support from the ``ANR JCJC'' project MODEVOL: ANR-13-JS01-0009.
%\bibliography{../biblio.bib}
% \bib, bibdiv, biblist are defined by the amsrefs package.
\begin{bibdiv}
\begin{biblist}

\bib{ballerini2008interaction}{article}{
      author={Ballerini, M.},
      author={Cabibbo, N.},
      author={Candelier, R.},
      author={Cavagna, A.},
      author={Cisbani, E.},
      author={Giardina, I.},
      author={Lecomte, V.},
      author={Orlandi, A.},
      author={Parisi, G.},
      author={Procaccini, A.},
      author={Viale, M.},
      author={Dravkovic, Z.},
       title={Interaction ruling animal collective behavior depends on
  topological rather than metric distance: Evidence from a field study},
        date={2008},
     journal={Proc. Nat. Acad. Sci.},
      volume={105},
      number={4},
       pages={1232\ndash 1237},
}

\bib{bassetti2011central}{article}{
      author={Bassetti, Federico},
      author={Ladelli, Lucia},
      author={Matthes, Daniel},
       title={Central limit theorem for a class of one-dimensional kinetic
  equations},
        date={2011},
        ISSN={0178-8051},
     journal={Probab. Theory Related Fields},
      volume={150},
      number={1-2},
       pages={77\ndash 109},
         url={http://dx.doi.org/10.1007/s00440-010-0269-8},
      review={\MR{2800905 (2012f:60087)}},
}

\bib{berger2003panoramic}{book}{
      author={Berger, Marcel},
       title={A panoramic view of {R}iemannian geometry},
   publisher={Springer-Verlag, Berlin},
        date={2003},
        ISBN={3-540-65317-1},
         url={http://dx.doi.org/10.1007/978-3-642-18245-7},
      review={\MR{2002701 (2004h:53001)}},
}

\bib{bertin2006boltzmann}{article}{
      author={Bertin, E.},
      author={Droz, M.},
      author={Grégoire, G.},
       title={Boltzmann and hydrodynamic description for self-propelled
  particles},
        date={2006},
     journal={Phys. Rev. E},
      volume={74},
       pages={022101},
}

\bib{bertin2009hydrodynamic}{article}{
      author={Bertin, E.},
      author={Droz, M.},
      author={Grégoire, G.},
       title={Hydrodynamic equations for self-propelled particles: microscopic
  derivation and stability analysis},
        date={2009},
     journal={J. Phys. A: Math. Theor.},
      volume={42},
       pages={445001},
}

\bib{bisi2009kinetic}{article}{
      author={Bisi, Marzia},
      author={Spiga, Giampiero},
      author={Toscani, Giuseppe},
       title={Kinetic models of conservative economies with wealth
  redistribution},
        date={2009},
        ISSN={1539-6746},
     journal={Commun. Math. Sci.},
      volume={7},
      number={4},
       pages={901\ndash 916},
         url={http://projecteuclid.org/euclid.cms/1264434137},
      review={\MR{2604625 (2011c:91167)}},
}

\bib{bobylev2000properties}{article}{
      author={Bobylev, A.~V.},
      author={Carrillo, J.~A.},
      author={Gamba, I.~M.},
       title={On some properties of kinetic and hydrodynamic equations for
  inelastic interactions},
        date={2000},
        ISSN={0022-4715},
     journal={J. Statist. Phys.},
      volume={98},
      number={3-4},
       pages={743\ndash 773},
         url={http://dx.doi.org/10.1023/A:1018627625800},
      review={\MR{1749231 (2001c:82063)}},
}

\bib{bolley2012meanfield}{article}{
      author={Bolley, F.},
      author={Cañizo, J.~A.},
      author={Carrillo, J.~A.},
       title={Mean-field limit for the stochastic {Vicsek} model},
        date={2012},
     journal={Appl. Math. Lett.},
      volume={3},
      number={25},
       pages={339\ndash 343},
}

\bib{bolley2013uniform}{article}{
      author={Bolley, François},
      author={Gentil, Ivan},
      author={Guillin, Arnaud},
       title={Uniform convergence to equilibrium for granular media},
        date={2013},
        ISSN={0003-9527},
     journal={Arch. Ration. Mech. Anal.},
      volume={208},
      number={2},
       pages={429\ndash 445},
         url={http://dx.doi.org/10.1007/s00205-012-0599-z},
      review={\MR{3035983}},
}

\bib{boudin2009kinetic}{article}{
      author={Boudin, Laurent},
      author={Salvarani, Francesco},
       title={A kinetic approach to the study of opinion formation},
        date={2009},
        ISSN={0764-583X},
     journal={M2AN Math. Model. Numer. Anal.},
      volume={43},
      number={3},
       pages={507\ndash 522},
         url={http://dx.doi.org/10.1051/m2an/2009004},
      review={\MR{2536247 (2010m:91172)}},
}

\bib{bulmer1980mathematical}{book}{
      author={Bulmer, G.},
       title={The mathematical theory of quantitative genetics},
      series={Oxford science publications},
   publisher={Clarendon Press},
        date={1980},
        ISBN={9780198575306},
         url={http://books.google.fr/books?id=i7pqAAAAMAAJ},
}

\bib{carlen2014model}{unpublished}{
      author={Carlen, E.},
      author={Carvalho, M.},
      author={Degond, P.},
      author={Wennberg, B.},
       title={A model for rod alignment and schooling fish},
        note={submitted},
}

\bib{carlen2013hierarchy}{article}{
      author={Carlen, E.},
      author={Chatelin, R.},
      author={Degond, P.},
      author={Wennberg, B.},
       title={Kinetic hierarchy and propagation of chaos in biological swarm
  models},
        date={2013},
        ISSN={0167-2789},
     journal={Phys. D},
      volume={260},
       pages={90\ndash 111},
         url={http://dx.doi.org/10.1016/j.physd.2012.05.013},
      review={\MR{3143996}},
}

\bib{carlen2013kinetic}{article}{
      author={Carlen, Eric},
      author={Degond, Pierre},
      author={Wennberg, Bernt},
       title={Kinetic limits for pair-interaction driven master equations and
  biological swarm models},
        date={2013},
        ISSN={0218-2025},
     journal={Math. Models Methods Appl. Sci.},
      volume={23},
      number={7},
       pages={1339\ndash 1376},
         url={http://dx.doi.org/10.1142/S0218202513500115},
      review={\MR{3042918}},
}

\bib{carrillo2009double}{article}{
      author={Carrillo, J.~A.},
      author={D'Orsogna, M.~R.},
      author={Panferov, V.},
       title={Double milling in self-propelled swarms from kinetic theory},
        date={2009},
     journal={Kin. Rel. Mod.},
      volume={2},
      number={2},
       pages={363\ndash 378},
}

\bib{carrillo2009overpopulated}{article}{
      author={Carrillo, Jos{\'e}~A.},
      author={Cordier, St{\'e}phane},
      author={Toscani, Giuseppe},
       title={Over-populated tails for conservative-in-the-mean inelastic
  {M}axwell models},
        date={2009},
        ISSN={1078-0947},
     journal={Discrete Contin. Dyn. Syst.},
      volume={24},
      number={1},
       pages={59\ndash 81},
         url={http://dx.doi.org/10.3934/dcds.2009.24.59},
      review={\MR{2476680 (2009m:82025)}},
}

\bib{carrillo2006contractions}{article}{
      author={Carrillo, José~A.},
      author={McCann, Robert~J.},
      author={Villani, Cédric},
       title={Contractions in the 2-{W}asserstein length space and
  thermalization of granular media},
        date={2006},
        ISSN={0003-9527},
     journal={Arch. Ration. Mech. Anal.},
      volume={179},
      number={2},
       pages={217\ndash 263},
         url={http://dx.doi.org/10.1007/s00205-005-0386-1},
      review={\MR{2209130 (2006j:76121)}},
}

\bib{carrillo2007contractive}{article}{
      author={Carrillo, José~Antonio},
      author={Toscani, Giuseppe},
       title={Contractive probability metrics and asymptotic behavior of
  dissipative kinetic equations},
    subtitle={Notes of the 2006 porto ercole summer school},
        date={2007},
     journal={Riv. Mat. Univ. Parma},
      volume={6},
       pages={75\ndash 198},
}

\bib{che2011kinetic}{article}{
      author={Che, Jiahang},
       title={A kinetic model on portfolio in finance},
        date={2011},
        ISSN={1539-6746},
     journal={Commun. Math. Sci.},
      volume={9},
      number={4},
       pages={1073\ndash 1096},
         url={http://dx.doi.org/10.4310/CMS.2011.v9.n4.a7},
      review={\MR{2901817}},
}

\bib{cheeger1975comparison}{book}{
      author={Cheeger, J.},
      author={Ebin, D.~G.},
       title={Comparison theorems in {R}iemannian geometry},
      series={North-Holland Mathematical Library},
   publisher={North-Holland Publishing Company},
     address={Amsterdam},
        date={1975},
      volume={9},
      review={\MR{0458335 (56 \#16538)}},
}

\bib{degond2014phase}{unpublished}{
      author={Degond, P.},
      author={Frouvelle, A.},
      author={Liu, J.-G.},
       title={Phase transitions, hysteresis, and hyperbolicity for
  self-organized alignment dynamics},
        note={preprint arXiv:1304.2929, submitted},
}

\bib{degond2013macroscopic}{article}{
      author={Degond, P.},
      author={Frouvelle, A.},
      author={Liu, J.-G.},
       title={Macroscopic limits and phase transition in a system of
  self-propelled particles},
        date={2013},
     journal={J. Nonlin. Sci.},
      volume={23},
      number={3},
       pages={427\ndash 456},
}

\bib{degond2008continuum}{article}{
      author={Degond, P.},
      author={Motsch, S.},
       title={Continuum limit of self-driven particles with orientation
  interaction},
        date={2008},
     journal={Math. Mod. Meth. Appl. Sci.},
      volume={18},
       pages={1193\ndash 1215},
}

\bib{degond2010macroscopic}{article}{
      author={Degond, P.},
      author={Motsch, S.},
       title={A macroscopic model for a system of swarming agents using
  curvature control},
        date={2011},
     journal={J. Stat. Phys.},
      volume={143},
      number={4},
       pages={685\ndash 714},
}

\bib{frouvelle2012dynamics}{article}{
      author={Frouvelle, A.},
      author={Liu, J.-G.},
       title={Dynamics in a kinetic model of oriented particles with phase
  transition},
        date={2012},
     journal={SIAM J. Math. Anal.},
      volume={44},
      number={2},
       pages={791\ndash 826},
}

\bib{gabetta2012complete}{article}{
      author={Gabetta, Ester},
      author={Regazzini, Eugenio},
       title={Complete characterization of convergence to equilibrium for an
  inelastic {K}ac model},
        date={2012},
        ISSN={0022-4715},
     journal={J. Stat. Phys.},
      volume={147},
      number={5},
       pages={1007\ndash 1019},
         url={http://dx.doi.org/10.1007/s10955-012-0505-y},
      review={\MR{2946634}},
}

\bib{galam1982sociophysics}{article}{
      author={Galam, S},
      author={Gefen, Y},
      author={Shapir, Y},
       title={Sociophysics -- {A} new approach of sociological collective
  behavior {I}. {M}ean-behaviour description of a strike},
        date={1982},
        ISSN={{0022-250X}},
     journal={J. Math. Sociol.},
      volume={9},
      number={1},
       pages={1\ndash 13},
}

\bib{giacomin2012global}{article}{
      author={Giacomin, Giambattista},
      author={Pakdaman, Khashayar},
      author={Pellegrin, Xavier},
       title={Global attractor and asymptotic dynamics in the {K}uramoto model
  for coupled noisy phase oscillators},
        date={2012},
        ISSN={0951-7715},
     journal={Nonlinearity},
      volume={25},
      number={5},
       pages={1247\ndash 1273},
         url={http://dx.doi.org/10.1088/0951-7715/25/5/1247},
      review={\MR{2914138}},
}

\bib{hinow2009analysis}{article}{
      author={Hinow, Peter},
      author={Le~Foll, Frank},
      author={Magal, Pierre},
      author={Webb, Glenn~F.},
       title={Analysis of a model for transfer phenomena in biological
  populations},
        date={2009},
        ISSN={0036-1399},
     journal={SIAM J. Appl. Math.},
      volume={70},
      number={1},
       pages={40\ndash 62},
         url={http://dx.doi.org/10.1137/080732420},
      review={\MR{2505079 (2010d:92112)}},
}

\bib{mischler2009stability}{article}{
      author={Mischler, S.},
      author={Mouhot, C.},
       title={Stability, convergence to self-similarity and elastic limit for
  the {B}oltzmann equation for inelastic hard spheres},
        date={2009},
        ISSN={0010-3616},
     journal={Comm. Math. Phys.},
      volume={288},
      number={2},
       pages={431\ndash 502},
         url={http://dx.doi.org/10.1007/s00220-009-0773-9},
      review={\MR{2500990 (2010i:82160)}},
}

\bib{pareschi2006selfsimilarity}{article}{
      author={Pareschi, Lorenzo},
      author={Toscani, Giuseppe},
       title={Self-similarity and power-like tails in nonconservative kinetic
  models},
        date={2006},
        ISSN={0022-4715},
     journal={J. Stat. Phys.},
      volume={124},
      number={2-4},
       pages={747\ndash 779},
         url={http://dx.doi.org/10.1007/s10955-006-9025-y},
      review={\MR{2264624 (2007h:82077)}},
}

\bib{pasquier2012different}{article}{
      author={Pasquier, Jennifer},
      author={Galas, Ludovic},
      author={Boulange-Lecomte, Celine},
      author={Rioult, Damien},
      author={Bultelle, Florence},
      author={Magal, Pierre},
      author={Webb, Glenn},
      author={Le~Foll, Frank},
       title={Different modalities of intercellular membrane exchanges mediate
  cell-to-cell {P}-glycoprotein transfers in {MCF}-7 breast cancer cells},
        date={2012},
        ISSN={{0021-9258}},
     journal={J. Biol. Chem.},
      volume={287},
      number={10},
       pages={7374\ndash 7387},
}

\bib{pulvirenti2004asymptotic}{article}{
      author={Pulvirenti, Ada},
      author={Toscani, Giuseppe},
       title={Asymptotic properties of the inelastic {K}ac model},
        date={2004},
        ISSN={0022-4715},
     journal={J. Statist. Phys.},
      volume={114},
      number={5-6},
       pages={1453\ndash 1480},
         url={http://dx.doi.org/10.1023/B:JOSS.0000013964.98706.00},
      review={\MR{2039485 (2004k:82090)}},
}

\bib{turelli1994genetic}{article}{
      author={Turelli, M.},
      author={Barton, N.~H.},
       title={Genetic and statistical analyses of strong selection on polygenic
  traits: what, me normal?},
        date={1994},
        ISSN={{0016-6731}},
     journal={Genetics},
      volume={138},
      number={3},
       pages={913\ndash 941},
}

\bib{vicsek1995novel}{article}{
      author={Vicsek, T.},
      author={Czirók, A.},
      author={Ben-Jacob, E.},
      author={Cohen, I.},
      author={Shochet, O.},
       title={Novel type of phase transition in a system of self-driven
  particles},
        date={1995},
     journal={Phys. Rev. Lett.},
      volume={75},
      number={6},
       pages={1226\ndash 1229},
}

\bib{villani2009optimal}{book}{
      author={Villani, C{\'e}dric},
       title={Optimal transport},
    subtitle={Old and new},
      series={Grundlehren der Mathematischen Wissenschaften},
   publisher={Springer-Verlag, Berlin},
        date={2009},
      volume={338},
        ISBN={978-3-540-71049-3},
         url={http://dx.doi.org/10.1007/978-3-540-71050-9},
      review={\MR{2459454 (2010f:49001)}},
}

\end{biblist}
\end{bibdiv}

\end{document}